\newtheorem{theorem}{Theorem}[section]
\newtheorem{theorem-definition}[theorem]{Theorem-Definition}
\newtheorem{theorem-construction}[theorem]{Theorem-Construction}
\newtheorem{lemma-definition}[theorem]{Lemma--Definition}
\newtheorem{lemma-construction}[theorem]{Lemma--Construction}
\newtheorem{lemma}[theorem]{Lemma}
\newtheorem{proposition}[theorem]{Proposition}
\newtheorem{corollary}[theorem]{Corollary}
\newtheorem{conjecture}[theorem]{Conjecture}
\newtheorem{definition}[theorem]{Definition}
\newtheorem{problem}{Problem}
\newenvironment{remark}[1][Remark.]{\begin{trivlist}
\item[\hskip \labelsep {\bfseries #1}]}{\end{trivlist}}
\newcommand{\old}[1]{}
\newcommand{\Z}{{\mathbb Z}}
\newcommand{\R}{{\mathbb R}}
\newcommand{\C}{{\mathbb C}}
\newcommand{\T}{{\mathbb T}}
\newcommand{\V}{{\mathbb V}}
\renewcommand{\P}{{\mathbb P}}
\newcommand{\PGL}{\mathrm{PGL}}
\newcommand{\veps}{\varepsilon}
\newcommand{\lms}{\longmapsto}
\newcommand{\lra}{\longrightarrow}
\newcommand{\hra}{\hookrightarrow}
\newcommand{\be}{\begin{equation}}
\newcommand{\ee}{\end{equation}}
\newcommand{\bt}{\begin{theorem}}
\newcommand{\et}{\end{theorem}}
\newcommand{\bd}{\begin{definition}}
\newcommand{\ed}{\end{definition}}
\newcommand{\bp}{\begin{proposition}}
\newcommand{\ep}{\end{proposition}}
\newcommand{\bl}{\begin{lemma}}
\newcommand{\el}{\end{lemma}}
\newcommand{\bc}{\begin{corollary}}
\newcommand{\ec}{\end{corollary}}
\newcommand{\bcon}{\begin{conjecture}}
\newcommand{\econ}{\end{conjecture}}
\newcommand{\la}{\label}
\renewcommand{\L}{{\mathcal L}}
\begin{document}
\date{}
\title{Dimers and cluster integrable systems}  
\author{A.B. Goncharov, R. Kenyon}

\maketitle

\begin{abstract} We show that the dimer model on a bipartite graph $\Gamma$ on a torus
gives rise to a quantum integrable system of special type, which we call 
a {\it cluster integrable system}. The phase space of the classical
system contains, as an open dense subset, the moduli space 
${\L}_\Gamma$ of line bundles with connections on the graph $\Gamma$. 
The sum of Hamiltonians  is  
essentially the partition function of the dimer model. 

We say that two such graphs $\Gamma_1$ and $\Gamma_2$ are {\it equivalent} 
if the Newton polygons of the corresponding partition functions   
coincide up to translation. We define elementary transformations 
of bipartite surface graphs, and show that two equivalent minimal 
bipartite graphs are related by a sequence of elementary transformations. 
For each elementary transformation 
we define a birational Poisson isomorphism ${\L}_{\Gamma_1} \to {\L}_{\Gamma_2}$  
providing an equivalence of the integrable systems.  
We show that it is a cluster Poisson transformation, as defined in \cite{FG1}.

We show that for any convex integral polygon $N$ 
there is a non-empty finite set of minimal graphs $\Gamma$ for which 
$N$ is the Newton polygon of the partition function related to $\Gamma$. 
Gluing the varieties ${\L}_\Gamma$ for graphs 
$\Gamma$ related by elementary transformations 
via the corresponding 
cluster Poisson transformations, we get a Poisson space ${\mathcal X}_N$.  
It  is  a 
natural phase space for the integrable system. The Hamiltonians are 
functions on ${\mathcal X}_N$, parametrized by the 
interior points of the Newton polygon $N$. 
We construct Casimir functions whose level sets are the symplectic leaves of 
${\mathcal X}_N$.

The space ${\mathcal X}_N$
 has a structure of a cluster Poisson variety.  
Therefore the algebra of regular functions on ${\mathcal X}_N$ 
has a non-commutative $q$-deformation to a $\ast$-algebra ${\mathcal O}_q({\mathcal X}_N)$. We show that the Hamiltonians 
give rise to a commuting family of quantum Hamiltonians. Together with the quantum Casimirs, they  
provide a quantum integrable
system. Applying the general quantization scheme 
\cite{FG2}, we get a 
$\ast$-representation of the $\ast$-algebra ${\mathcal O}_q({\mathcal X}_N)$ 
in a Hilbert space. The quantum Hamiltonians act by 
commuting unbounded selfadjoint operators. 

For square grid bipartite graphs 
on a torus we get {\it discrete quantum integrable systems}, where 
the evolution is a cluster automorphism of the $\ast$-algebra 
${\mathcal O}_q({\mathcal X}_N)$ 
 commuting with the quantum Hamiltonians. 
We show that the {\it octahedral recurrence}, closely related to 
{\it Hirota's bilinear difference 
equation}
 \cite{Hir}, appears this way. 

Any graph $G$ on a torus ${\mathbb T}$ gives rise to a bipartite graph 
$\Gamma_G$ on ${\mathbb T}$. 
We show that 
the phase space ${\mathcal X}$ related to  the graph $\Gamma_G$ has 
a  Lagrangian subvariety ${\mathcal R}$, defined in each 
 coordinate system by a
 system of monomial equations. 
 We
identify it with the space parametrizing resistor networks on $G$. 
The pair $({\mathcal X}, {\mathcal R})$ has a large group of cluster automorphisms. 
In particular, for a hexagonal grid graph 
 we get a {discrete quantum integrable system} on ${\mathcal X}$ 
whose restriction 
to ${\mathcal R}$ is essentially given by 
the {\it cube recurrence} 
 \cite{Miwa}, \cite{CS}. 

The set of positive real points ${\mathcal X}_N(\R_{>0})$ of the phase space  
is well defined. It is isomorphic to the 
moduli space of simple Harnack curves with divisors studied in \cite{KO}. The 
Liouville tori of the real
integrable system are given by the product of ovals of the simple Harnack curves. 

In the sequel  \cite{GK} to
this paper we show that the set of complex points ${\mathcal X}_N(\C)$ of the phase space is birationally isomorphic
to a finite cover of
 the Beauville  complex algebraic integrable system related to the toric surface 
assigned to the polygon $N$.

\end{abstract}

\tableofcontents

\section{Introduction}

A \emph{line bundle} $V$ on a graph $\Gamma$ is given by assigning a $1$-dimensional
complex vector space $V_v$ to each vertex of $\Gamma$. 
A \emph{connection} on $V$ is a choice of isomorphism $\phi_{vv'}:V_v\to V_{v'}$
whenever $v,v'$ are adjacent, satisfying $\phi_{v'v}=\phi_{vv'}^{-1}$.

We construct a Poisson structure on the space of line bundles with
connections on a bipartite graph embedded on a surface.
When the graph is embedded on a torus $\T$ we construct
an algebraic integrable system, with commuting Hamiltonians which can be
written as Laurent polynomials in natural coordinates on $\L_\Gamma$, the moduli space of line bundles with connections on $\Gamma$.
The Hamiltonians are sums of dimer covers of $\Gamma$.

A \emph{dimer cover} of a graph $\Gamma$ is a set of edges with the property
that every vertex is the endpoint of a unique edge in the cover.
Probability measures on dimer covers have been the subject of much recent work
in statistical mechanics.
In this paper we study some non-probabilistic applications of the dimer model,
in particular to the construction of cluster integrable systems. 

Below we introduce the key non-technical definitions which we use 
throughout the paper, and discuss the main results. 

\subsection{Dimer models and Poisson geometry.} 
A {\it surface graph $\Gamma$} is a graph embedded
on a compact oriented surface $S$ whose {\it faces}, 
i.e. the connected components of $S-\Gamma$,  are contractible. 
It is homotopy equivalent to an open surface $S_0\subset S$, obtained by putting a puncture in every face. 
A {\it bipartite} graph is a graph with vertices of two types,  
black and white, such that each edge 
has one black and one white vertex. 

\subsubsection{Conjugate surface graph and Poisson structure}\la{sec1.1.2}
Let ${\L}_\Gamma$ be the moduli space of line bundles with 
connections on  a graph $\Gamma$. 
Any oriented loop $L$ on $\Gamma$ gives rise to a function 
$W_L$ on ${\L}_\Gamma$ given
 by the monodromy of a line bundle with connection 
along $L$. One has $W_{-L}=W_{L}^{-1}$, 
where $-L$ is the loop $L$ with the opposite orientation. 
The monodromies around the loops provide an isomorphism
\be \la{3.5.09.1}
{\L}_\Gamma \cong {\rm Hom}(H_1(\Gamma, \Z), \C^*) = H^1(\Gamma, \C^*). 
\ee

For any surface graph $\Gamma$, the moduli space ${\L}_\Gamma$
has the  {\it standard Poisson structure}. Namely, since $\Gamma$ is homotopy equivalent to the surface 
$S_0$, we have   ${\L}_\Gamma = H^1(S_0, \C^*)$. 
This latter space has a Poisson structure provided by 
the intersection pairing  on $H_1(S_0, \Z)$. 

Now let $\Gamma$ be a bipartite surface graph. 
Then we introduce a \emph{new} Poisson structure on ${\L}_\Gamma$
related to the standard one by a ``twist". 

A surface graph 
is the same thing as a {\it ribbon graph}: a ribbon graph is a graph with an additional structure given by, for each vertex, a cyclic order 
of the edges at that vertex. Given a ribbon graph, we can replace 
its edges by ribbons, getting an oriented surface.

A bipartite surface graph $\Gamma$ gives rise to a new ribbon graph $\widehat \Gamma_w = \widehat \Gamma$, 
the {\it conjugate graph}, obtained by reversing the cyclic orders at 
all white vertices\footnote{Reversing the cyclic order at the black vertices we get a ribbon graph $\widehat \Gamma_b$, 
which differs from 
$\widehat \Gamma_w$ by the orientation.}. 
The topological surface with boundary 
corresponding to the ribbon graph $\widehat \Gamma_w$ is called the 
{\it conjugated surface $\widehat S_w$.} 

Evidently a line bundle with connection on the 
graph $\widehat \Gamma$ is  the same thing as a line bundle with connection on the original graph $\Gamma$. 
So  there is a canonical isomorphism 
\be \la{ipssbl} 
{\L}_{\Gamma} = {\L}_{\widehat \Gamma}.
\ee
 The standard Poisson structure on the 
moduli space ${\L}_{\widehat \Gamma}$  
combined with the isomorphism (\ref{ipssbl})  
provides the Poisson structure on ${\L}_{\Gamma}$ we need. 
Precisely, consider  the intersection pairing on $\widehat S_w$:
\be \la{INTP}
\varepsilon_{\widehat S_w}: H_1(\widehat S_w, \Z) \wedge H_1(\widehat S_w, \Z)  \lra \Z.
\ee
Given two loops $L_1, L_2$ on $\Gamma$, 
we define the Poisson bracket $\{W_{L_1}, W_{L_2}\}$ by setting
\be \la{3/1/09.1}
\{W_{L_1}, W_{L_2}\} = \varepsilon_{\widehat S_w}(L_1, L_2)W_{L_1} W_{L_2}.
\ee
and extending it via the Leibniz rule to a 
Poisson bracket on the algebra generated by the functions 
$W_L$, which coincides with the algebra of regular functions on the variety ${\L}_\Gamma$.

\subsubsection{Natural coordinates on ${\L}_{\Gamma}$.}
The orientation of the surface $S$ 
induces orientations of the  faces  of $\Gamma$. 
The group $H_1(\Gamma, \Z)$ is  
generated by the oriented boundaries $\partial(F)$ of the 
faces $F$ of $\Gamma$,  whose orientation is induced by the 
orientation of $F$, and loops generating $H_1(S,\Z)$. 
The only relation is that the sum of boundaries of all faces is zero. 
There is an exact sequence, where 
 the map $\pi$ is induced by the embedding 
$\Gamma \hra S$:  
$$
0\lra{\rm Ker}~\pi \lra H_1(\Gamma;\Z) \stackrel{\pi}{\lra} H_1(S;\Z)\lra 0.
$$
The subgroup ${\rm Ker}~\pi$ is generated by the oriented boundaries $\partial(F)$ of 
faces $F$.

Summarising,  
the monodromies of connections around all but one 
oriented faces $F$ of $\Gamma$, augmented by the monodromies 
around loops generating $H_1(S, \Z)$, provide a coordinate system $\{X_i\}$ 
on the  moduli space ${\L}_\Gamma$. The Poisson structure in these coordinates 
has a standard quadratic form
\be \la{PT1}
\{X_i, X_j\} = \varepsilon_{ij}X_i X_j, \qquad \varepsilon_{ij}\in \Z,
\ee
where $\veps$ is the intersection pairing on $\hat S_w$.

\subsubsection{
Zig-zag loops and the center of the Poisson algebra of functions on  ${\mathcal L}_\Gamma$.} 
A {\it zig-zag path} (see \cite{Kenyon.dirac, Postnikov}) on a ribbon graph graph  
$\Gamma$ is an oriented path on   
$\Gamma$ which turns maximally left at white vertices and 
turns maximally left at black vertices; see Figure \ref{di15}. Zig-zag paths necessarily close up to form zig-zag loops.
There is a bijection 
\be\la{ZZBH}
\{\mbox{zig-zag loops on $\Gamma$}\} \leftrightarrow \{\mbox{boundaries of the holes on
 $\widehat S_w$}\}.
\ee
The orientations of zig-zag loops match  
the orientations of the boundary loops on $\widehat S_w$ induced by 
the orientation of  $\widehat S_w$. 

\begin{figure}[ht]
\centerline{\epsfbox{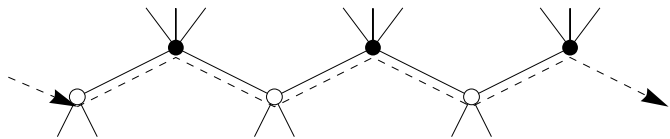}}
\caption{A Zig-zag path on a ribbon graph.\label{di15} }
\end{figure}

 \bl \la{1} Let $Z$ be an (oriented) 
zig-zag loop on a bipartite oriented surface graph $\Gamma$. 
Then as $Z$ runs over zig-zag loops, the functions 
$W_Z$ generate the center of the Poisson algebra ${\mathcal O}({\L}_\Gamma)$ 
of functions on ${\L}_\Gamma$. 
The product of all of them is $1$. This is the only relation between them. 
\el

We call the functions $W_Z$ the {\it Casimirs}. 

\begin{proof} It follows from the  canonical bijection (\ref{ZZBH}). 
Evidently the boundary loops on $\widehat S_w$ 
generate the kernel of the intersection pairing 
on $H_1(\widehat S_w, \Z)$. Clearly the sum 
of them is zero, and this is the only relation between them. 
\end{proof}

There is a similar Poisson moduli space ${\L}_\Gamma$ 
for a bipartite graph $\Gamma$ on an oriented surface $S$ with boundary.

\subsubsection{Gluing the Poisson phase space}
The set of homology classes of dimer coverings of $\Gamma$ (as defined below)
defines a convex
polyhedron $N\subset H_1(S, \R)$. It is the Newton polyhedron of the 
{\it dimer partition function}.
Two bipartite surface graphs  are {\it equivalent} 
if their Newton polyhedra coincide up to translation. In each equivalence class there
are \emph{minimal} bipartite surface graphs, which have
the minimal number of faces. 
We prove that every two minimal bipartite surface graphs are related by 
certain {\it elementary transformations}, consisting of  
``spider''  moves 
and ``shrinking of a $2$-valent vertex'' moves.  
Each elementary transformation $\Gamma_1 \to \Gamma_2$ gives rise 
to a cluster Poisson birational isomorphism 
\be \la{gm}
i_{\Gamma_1, \Gamma_2}: {\mathcal L}_{\Gamma_1} \stackrel{\sim}{\lra} {\mathcal L}_{\Gamma_2} 
\ee
We define a  Poisson variety ${\mathcal X}_N$, the {\it phase space}, 
by gluing the moduli spaces ${\L}_\Gamma$ 
via transformations (\ref{gm}). 
The variety ${\mathcal X}_N$ depends only on $N$.

Since the sets of zig-zag loops on equivalent 
bipartite surface graphs 
are naturally identified, the gluing maps (\ref{gm}) 
preserve the Casimirs. Therefore  
the center of the Poisson algebra 
of regular functions on the phase space ${\mathcal X}_N$ is also 
described by the Casimirs. 

\old{
\subsubsection{Dimer models on surfaces and cluster Poisson varieties.} \la{clusterpoissonsection}
Let us explain now the precise relationship with cluster Poisson varieties. 
Let 
$$
{\rm T}_S:= {\rm Hom}(H_1(S, \Z), \C^*) = H_1(S, \C^*);
$$
it is just an algebraic torus $(\C^*)^n$.
The group ${\rm T}_S$ is identified with the group of line bundles with flat 
connections 
on  $S$. The embedding 
$i$ of the graph $\Gamma$ into the surface $S$ provides a free action 
of the algebraic group ${\rm T}_S$ on the moduli space ${\mathcal L}_\Gamma$, 
 given by $\ast \lms \ast \otimes i^*L$, where $L$ is a flat line bundle on the surface
$S$. This action preserves the Poisson structure on ${\L}_\Gamma$.

The monodromy of a line bundle on $\Gamma$ around a face $F$  is a function 
$W_F$ on the space ${\L}_\Gamma$, called the {\it face weight}. 
The product of all face weights is  $1$. Define
$$
{\mathcal X}'_\Gamma = (\C^*)^{\{\mbox{faces of $\Gamma$}\}}. 
$$
Let ${\mathbb W}: {\mathcal X}'_\Gamma \to \C^*$ be the map given by the product of all coordinates. 
Assigning to a  line bundle on $\Gamma$ its monodromies around the faces   
we get a canonical isomorphism  
$$
({\mathcal L}_\Gamma/{\rm T}_S) \stackrel{\sim}{\lra}
   {\rm Ker}\Bigl({\mathcal X}'_\Gamma\stackrel{{\mathbb W}}{\lra} \C^*\Bigr). 
$$
The birational Poisson transformations (\ref{gm}) 
are 
${\rm T}_S$-equivariant. So to describe them is just the same as to describe their action on the face weights. 
We prove that the latter are cluster Poisson transformations preserving the function 
${\mathbb W}$. Abusing notation, we also refer to them as transformations (\ref{gm}). 
Here is a precise statement.

\bt \la{1,1} The algebraic tori $\{{\mathcal X}'_\Gamma\}$ 
parametrized by   
equivalent minimal bipartite graphs $\Gamma$  
on the surface ${S}$ are glued  by the cluster Poisson transformations   (\ref{gm})
 into a cluster Poisson variety  
${\mathcal X}'_N$.

The maps (\ref{gm}) preserve the functions ${\mathbb W}$. 
So there is a codimension one subvariety ${\mathcal X}^0_N \subset {\mathcal X}'_N$ 
given  by  the equation ${\mathbb W} = 1$. 
The group ${\rm T}_S$ acts freely on  the phase space ${\mathcal X}_N$. There is a Poisson isomorphism
\be \la{MTTHH}
{\mathcal X}_N/{\rm T}_S \stackrel{\sim}{\lra}
   {\mathcal X}^0_N. 
\ee
\et

}

\subsection{Cluster integrable systems for dimer models on a torus.} 

\subsubsection{Classical integrable system}
Let us restrict to the case when $\Gamma$ is a bipartite graph on a torus ${\T}$. 
In this case the Newton polyhedron $N$  
is a convex polygon in the plane $H_1({\T}, \R)$ with vertices at the 
lattice $H_1({\T}, \Z)$. 
Zig-zag loops are parametrized 
by the intervals 
on the boundary $\partial N$ between two consecutive 
integral points, called {\it primitive boundary intervals}.

We show that the partition function of the dimer model on $\Gamma$ 
gives rise to a regular function ${\mathcal P}$ on  
${\mathcal X}_N$, the {\it modified partition function}, 
well defined up to a multiplication by a monomial of Casimirs.  
It has a natural decomposition into a sum of components  
$H_a$, called the {\it Hamiltonians}, parametrized by the 
internal integral points of 
the Newton polygon $N$. These Hamiltonians are Laurent polynomials in 
the $X_i$'s. 
The Hamiltonian flows are well defined. 

\bt \la{4.10.11.1} The Hamiltonian flows of the $H_a$   
commute, providing an integrable system on ${\mathcal X}_N$. 
Precisely, we get integrable systems on the generic symplectic leaves of ${\mathcal X}_N$, 
given by the level sets of the Casimirs. 
\et
In particular, 
$$
{\rm dim}{\mathcal X}_N = 2 i(N) + e(N)-1,
$$
where $i(N)$ (respectively $e(N)$) is the number of integral points inside (respectively 
on the boundary) of the polygon $N$.

\subsubsection{Quantum integrable system} \la{QS}
The algebra of regular functions 
on any cluster Poisson variety  ${\mathcal X}$ has a natural $q$-deformation to 
a $\ast$-algebra ${\mathcal O}_q({\mathcal X})$, defined 
in   \cite[Section 3]{FG1}
It can be realized, in many different ways, 
 as a subalgebra in the {\it quantum torus algebra}. The latter is generated by the elements 
$X_i^{\pm 1}$ satisfying the following relations, where $q$ is a formal variable commuting with $X_i$'s:
\be \la{PT2}
q^{-\varepsilon_{ij}}X_i X_j = q^{-\varepsilon_{ji}}X_j  X_i, \qquad \varepsilon_{ij}\in \Z.
\ee
It has an involutive antiautomorphism $\ast$, acting on the generators by 
$$
\ast(X_i) = X_i, \quad \ast(q) = q^{-1}. 
$$

So in our case we get a $\ast$-algebra ${\mathcal O}_q({\mathcal X}_N)$. 
The $\varepsilon_{ij}$ in (\ref{PT2}) is the same as the Poisson tensor in (\ref{PT1}). 
The quantum analog of Theorem \ref{4.10.11.1} is 

\bt
The Hamiltonians $H_a$ give rise to commuting selfadjoint elements ${\mathbb H}_a$ in the
 $\ast$-algebra ${\mathcal O}_q({\mathcal X}_N)$, which we call {\rm quantum Hamiltonians}. The Casimirs 
give rise to the generators of the center of the algebra. 

In any of the coordinate systems provided by bipartite graphs, the quantum Hamiltonians 
are given by Laurent polynomials with positive integral coefficients.
\et

Let us assume now that $|q|=1$. Then, according to \cite{FG2}, for any cluster Poisson variety ${\mathcal X}$, there  is 
a family of Hilbert spaces ${\mathcal H}_{\chi}$, 
parametrized by the characters $\chi$ of the center of the algebra ${\mathcal O}_q({\mathcal X})$, and 
  a $\ast$-representation of the $\ast$-algebra  ${\mathcal O}_q({\mathcal X})$ 
in (a certain Schwartz space ${\mathcal S}_{\chi}$, dense in) ${\mathcal H}_{\chi}$. 

So in our case we get a family of Hilbert spaces ${\mathcal H}_{N, \chi}$, parametrized 
by a collection of real numbers $\chi$, the eigenvalues of the Casimirs. 
The  quantum Hamiltonians 
are realized by commuting unbounded selfadjoint operators in 
${\mathcal H}_{N, \chi}$.

\begin{problem}
Find the generalized
eigenfunctions of the quantum Hamiltonians, and the spectral decomposition. 
\end{problem}

We call the emerging object a {\it cluster integrable system}. 
A key point is that a cluster integrable system admits a quantum version, as described above. 
We believe that many interesting integrable systems are cluster integrable systems.

\subsubsection{Discrete cluster  integrable systems.} \la{5.13.11.1}
In the case of the dimer model on a square grid on a 
torus\footnote{The square grid dimer models on a torus are parametrized by  
sublattices of the isomorphism group $\Z^2$ of the 
standard square grid bipartite graph in $\R^2$.}  
our quantum integrable system has 
a remarkable automorphism $A$, called the \emph{octahedron recurrence} or,
in other language, \emph{Hirota's bilinear difference equation} (HBDE) \cite{Hir}. 
We show that it commutes with the quantum Casimirs and quantum Hamiltonians, 
and is given by an element of the {\it cluster modular group} \cite{FG1} of the 
cluster variety ${\mathcal X}$ -- see Section \ref{HBDEsection}.

There are similar discrete cluster  integrable systems related to 
hexagonal graphs $G$ on a torus, where  the discrete evolution operator  
is described by the {\it cube recurrence}, see Sections 
\ref{resistorautsection}-\ref{cuberecsection}. 
Its phase space is the phase space of the dimer model related to  the 
resistor network model on $G$, see Section \ref{1.3}. 

We stress that discrete cluster integrable systems 
are discrete quantum integrable systems. Indeed, the ring of regular functions on any 
cluster Poisson variety admits a canonical non-commutative $q$-deformation, and 
the discrete evolution operator is given by an element of the cluster modular group, which 
acts by an automorphism of the $q$-deformed space  \cite{FG1}. 
Finally, the classical Hamiltonains, in all examples we know,  are given by 
functions which in any of the coordinate systems are Laurent polynomials with positive integral coefficients. 
Therefore they admit a natural upgrade 
to their non-commutative counterparts.

 \subsection{Cluster nature of a resistor network model on a torus}\la{1.3}

\subsubsection{Dimer model arising from a  graph on a torus} 
A special case of the dimer model is one arising from a planar resistor network $G$ via
the generalized
Temperley's trick \cite{KPW}. Namely, an arbitrary graph $G$ on a torus gives rise to a bipartite 
graph $\Gamma=\Gamma_G$, obtained by taking a barycentric subdivision of the graph 
$G$ and declaring the centers of the edges of $G$ to be the white vertices
and the 
centers of the faces of $G$ together with the 
vertices of $G$ to be the black vertices. 
The graph $\Gamma$ contains the dual graph $G'$ to the 
graph of $G$, see Figure \ref{di36}, 
where $G$ is shown by a solid lines, and $G'$ by dotted lines.

\begin{figure}[htbp]
\epsfxsize120pt
\centerline{\epsfbox{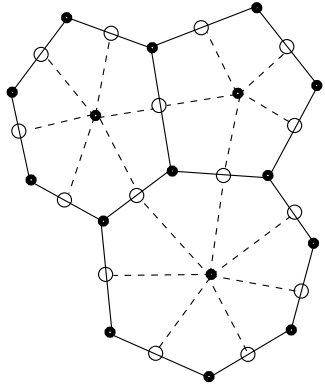}}
\caption{The bipartite graph $\Gamma_G$ (dashed and solid edges)
assigned to a graph $G$ (solid edges).\label{di36}}
\end{figure}

Given a surface graph $G$ embedded on a surface $S$ there is a graph $\widetilde G$ 
embedded on the
universal cover $\widetilde S$, which is the natural lift of $G$. 
A surface graph $G$ embedded on surface $S$ is called \emph{minimal} if
zig-zag paths (defined as unoriented 
paths which turn alternately maximally left and maximally right
at consecutive vertices) on $\widetilde G$ do not have selfcrossings and 
different zig-zag paths do not intersect in more than one edge.  

The Newton polygon $N(G)$ of the bipartite graph $\Gamma_G$ is a centrally-symmetric integral polygon in 
$H_1(\T, \Z)$ centered at the origin. It coincides with the Newton polygon of the spectral curve of the Laplacian 
related to a generic {\it resistor network} on $G$, see Section 
\ref{resistornetsection}.

Any such polygon arises as a Newton polygon $N(G)$ 
for a non-empty finite collection of minimal graphs $G$ on a torus. 
Any two minimal graphs $G_1$ and $G_2$ on a torus with the same Newton polygon are related by 
a sequence of elementary transformations, called Y-$\Delta$ moves, see Figure \ref{di34}. 

A Y-$\Delta$ move $G_1 \to G_2$ induces a transformation $\Gamma_{1} 
\to \Gamma_{2}$ 
of the corresponding bipartite graphs. 
We decompose it into a composition of four spider moves (Lemma \ref{6.1.11.1}), which provides us 
with a cluster birational isomorphism
\be \la{5.19.11.1}
\mu_{Y-\Delta}: ~{\mathcal L}_{\Gamma_{1}} \lra {\mathcal L}_{\Gamma_{2}}. 
\ee
Gluing the tori ${\mathcal L}_{\Gamma}$ assigned to the bipartite graphs $\Gamma$ 
related to the  
equivalent minimal graphs $G$, we get  a cluster Poisson space ${\mathcal X}$. \footnote{
Gluing this space we use only the graphs $\Gamma$ which are equivalent by 
the Y-$\Delta$-moves. We may have other bipartite  graphs
 which are still equivalent as bipartite graphs.}

\subsubsection{Resistor network Lagrangian subvariety}

The moduli space ${\mathcal L}_{\Gamma_G}$ of line bundles with connections on the graph $\Gamma_G$ has 
a subvariety ${\mathcal R}_{G}$ defined by the condition that the monodromies 
over any loop $\alpha$ in $G$ and any loop $\alpha'$ in $G'$ in the same homology class 
on $\T$ are equal. 
These equations are given by equating certain 
monomials in the $X_i$'s to $1$. 

We prove that ${\mathcal R}_{G}$ is a Lagrangian subvariety 
of the Poisson variety ${\mathcal L}_{\Gamma_G}$, i.e. its intersections with the generic 
symplectic leaves 
are Lagrangian subvarieties. We show that the space 
${\mathcal R}_{G}$ parametrizes {resistor networks} on 
$G$, justifying the name {\it resistor network Lagrangian subvariety}.

Equivalently, the conjugated surface 
$\widehat S_{\Gamma_G}$ is canonically realized as 
the boundary of a certain handlebody. 

We show that the Y-$\Delta$-cluster transformation (\ref{5.19.11.1}) provides a birational 
isomorphism of  the 
Lagrangian subvarieties
$$
\mu^R_{Y-\Delta}: ~{\mathcal R}_{G_1}\lra {\mathcal R}_{G_2}.
$$  
Gluing the subvarieties ${\mathcal R}_{G}$ via these birational isomorphisms, we get 
a Lagrangian subspace ${\mathcal R}$ of the corresponding cluster
 Poisson space ${\mathcal X}$. 
It 
gives rise to a uniquely defined up to a constant 
functional on the Schwartz space ${\cal S}_\chi$, see Section \ref{QCMLS}.

%
%
%
%\old{
\subsection{Toric surfaces, spectral data and the Beauville integrable system}

Let ${\rm T}:= {\rm T}_{\mathbb T}\cong(\C^*)^2$ be the algebraic torus related to the 
torus ${\mathbb T}$.  
Recall that a bipartite graph $\Gamma$ on  ${\mathbb T}$ gives rise to a Newton polygon 
$N$ in $H_1({\mathbb T}, \R)$ with vertices at  $H_1({\mathbb T}, \Z)$. 
Such a  Newton polygon $N$, considered up to translation, provides us 
a toric surface ${\mathcal N}$ together with a line bundle ${\mathcal L}$. 
The algebraic torus ${\rm T}$ acts on the pair $({\mathcal N}, {\mathcal L})$. 
The points of ${\mathcal N}$ where the action is not free form 
the divisor at infinity ${\mathcal N}_\infty$ of ${\mathcal N}$. It is a collection of ${\mathbb P}^1$'s 
which is combinatorially isomorphic to the polygon $N$. 

\subsubsection{The Kasteleyn operator}\la{1.4.1}

A \emph{Kasteleyn line bundle with connection} ${\mathbb K}$ on a bipartite surface graph 
$\Gamma$ 
is a line bundle with connection with monodromy $(-1)^{\ell/2+1}$
around faces having $\ell$ edges, and $\pm1$ around topologically
nontrivial loops. There are $2^{2g}$ non-isomorphic choices of ${\mathbb K}$ 
when $S$ has genus $g$. 
Given a line bundle with connection $V$ on a bipartite surface graph $\Gamma$,
we construct a new one $V\otimes {\mathbb K}$, and set 
\be \la{4.8.11.10}
{\bf V}_{\rm B}:= \bigoplus_{v \in  \{\text{black vertices of $\Gamma$}\}}(V\otimes {\mathbb K})_v, 
\ee
\be \la{4.8.11.11}
{\bf V}_{\rm W} := 
\bigoplus_{v\in  \{\text{white vertices of $\Gamma$}\}}(V\otimes {\mathbb K})_v.
\ee 
A connection on the line bundle $V\otimes {\mathbb K}$ is encoded by 
an operator 
$$
K: {\bf V}_{\rm B}\lra {\bf V}_{\rm W},
$$
the {\it Kasteleyn operator},
whose matrix elements are given by parallel transports 
for the connection along the edges of $\Gamma$. We let 
\be \la{dl}
{\bf L} = {\rm det}{\bf V}^*_{\rm B}\otimes {\rm det}{\bf V}_{\rm W}
\ee
be the determinant line (here $\det V$ denotes the highest 
exterior power of a vector space $V$).
The determinant $\det K$ is an element of the determinant line.

\subsubsection{The spectral data.} \la{1.4.2}
Let 
$$
{\rm T}:= {\rm Hom}(H_1({\mathbb T}, \Z), \C^*) = H_1({\mathbb T}, \C^*) \stackrel{\sim}{=}(\C^*)^2;
$$
The group ${\rm T}$ is identified with the group of complex line bundles with flat 
connections 
on  the torus ${\mathbb T}$. The embedding 
$i$ of the graph $\Gamma$ into the torus ${\mathbb T}$ provides a free action 
of the group ${\rm T}$ on the moduli space ${\mathcal L}_\Gamma$, 
 given by $\ast \lms \ast \otimes i^*L$, where $L$ is a flat line bundle on 
the torus ${\mathbb T}$. This action preserves the Poisson structure on ${\L}_\Gamma$.

The {\it partition function} can be calculated 
as the determinant of the Kasteleyn operator. 
It is a section of the determinant line 
bundle over ${\L}_\Gamma$. Let us restrict it to 
a ${\rm T}$-orbit on ${\L}_\Gamma$. Then it extends to a section of the line bundle 
${\mathcal L}\otimes {\bf L}$ over the toric surface ${\mathcal N}$ which compactifies the orbit. 
 The divisor of zeros of this section is called the {\it spectral curve} $C$. 

The spectral curve $C$ intersects the divisor at infinity ${\mathcal N}_\infty$ at a divisor 
$C_\infty$. A choice of a point  of ${\mathcal N}_0:= {\mathcal N} - {\mathcal N}_\infty$ provides a coordinate on every 
component of the divisor ${\mathcal N}_\infty$. The divisors $C_\infty$ satisfy a single condition: 
\be \la{5.16.1.11}
\mbox{The product of the coordinates of all points  
of $C_\infty$  is equal to $1$.}
\ee
 We call such divisors {\it admissible divisors at infinity}. 
There is a bijection 
\be \la{nunu}
\nu: \{\mbox{zig-zag paths on $\Gamma$}\} \stackrel{\sim}{\lra} 
\mbox{divisor at infinity $C_\infty$}. 
\ee
It is uniquely determined by the condition that the Casimir functions on the phase space given by the monodromies along 
zig-zag paths $\alpha$ are proportional to the Casimir functions given by the coordinates  
of the boundary points $\nu(\alpha)$ on the corresponding components of ${\mathcal N}_\infty$.

Pick a black vertex $b$ of $\Gamma$. A non-zero vector in the fiber $V_b$ 
provides a section $s_b$ of the sheaf ${\rm Coker}K$. The latter sheaf is a line bundle for a smooth $C$.  
The restriction of its divisor of zeros to 
${\mathcal N}_0$ is a degree $g$ effective divisor $S$, 
where $g$ is the genus of $C$. 

The triple $(C, S, \nu)$ is called the {\it spectral data}. 
It was studied in   \cite{KO}.

\subsubsection{A cover of the Beauville integrable system.} \la{1.4.3}
Let ${\mathcal S}$ be the moduli space parametrizing the spectral data. 
Let ${\mathcal B}$ be the moduli space of pairs $(C, \nu)$.
Forgetting $S$, we get a projection
\be \la{bis1}
\pi: {\mathcal S}\lra {\mathcal B}, \qquad  (C, S, \nu) \lms 
(C, \nu).
\ee
We claim that it is an integrable system, which covers  
the Beauville integrable system -- the 
latter does not 
take into account parametrizations (\ref{5.16.1.11}).

Namely, let ${\mathcal S}_{\chi}\subset {\mathcal S}$ be the subspace parametrizing the spectral data $(C, S, \nu)$ 
with a given divisor  $C_\infty $. It carries a rational symplectic  
form $\Omega_{\mathcal S}$ defined as follows.  
The canonical ${\rm T}$-invariant symplectic form $\omega$ on ${\mathcal N}_0$ 
gives rise to the Beauville symplectic form on ${\rm Sym}^g{\mathcal N}_0$:
$$
\Omega_B:= \sum_{i=1}^g s_i^*\omega. 
$$
Here $s_i: {\mathcal N}^g \lra {\mathcal N}$ is the projection 
to the $i$-th factor. There is a projection
$$
p: {\mathcal S}_{\chi} \lra {\rm Sym}^g{\mathcal N}_0, \qquad (C, S, \nu)\lms S.
$$
It is a Galois cover, whose Galois group is a product of symmetric groups  
acting by permuting the points of $C_\infty$ at each boundary component, and hence altering the bijection (\ref{nunu}). 
 Indeed, one shows that, given a generic admissible 
divisor at infinity $C_\infty \subset {\mathcal N}_\infty$ and a 
generic degree $g$ divisor $S\subset {\mathcal N}_0$, there is a unique spectral curve $C$ 
passing through $S$ and intersecting ${\mathcal N}_\infty$ 
at the divisor $C_\infty$. 
So the pull back $\Omega:= p^*\Omega_B$  is a 
rational symplectic form on  
${\mathcal S}_{\chi}$. 
Let ${\mathcal B}_\chi\subset {\mathcal B}$ be the subspace given by the condition that 
$C \cap {\mathcal N}_\infty$ is a given divisor. There is a fibration
\be \la{bis}
\pi_{\chi}: {\mathcal S}_{\chi}\lra  {\mathcal B}_{\chi}.
\ee
Its fibers are Lagrangian. The fiber over a point 
 $(C, \nu)$ is identified with 
an open part of  ${\rm Pic}_g(C)$, 
which is a principal homogeneous space over the Jacobian of $C$. 
So (\ref{bis}) and therefore (\ref{bis1}) are  algebraic integrable systems. 

\subsubsection{Connection with the dimer integrable system.} \la{1.4.4}
A proof of Theorem \ref{10.13.1.10} below will be given in \cite{GK}. 
Combined with the results of this paper 
it allows to quantize the  integrable system (\ref{bis1}). 

\bt \la{10.13.1.10} For any Newton polygon $N$, 
the spectral data provides a birational isomorphism ${\mathbb S}$ over the base 
${\mathcal B}$:
\be \la{SDI}
\begin{array}{ccccc}
{\mathcal X} &&\stackrel{{\mathbb S}}{\lra} && {\mathcal S}\\
&\searrow&&\swarrow &\\
&&{\mathcal B}&&
\end{array}
\ee
It identifies the dimer integrable system with  
the integrable system (\ref{bis1}), matching  
the symplectic leaves of ${\mathcal X}$ and   
the symplectic varieties $({\mathcal S}_{\chi}, \Omega)$. 
\et
The map ${\mathbb S}$ intertwines the action 
of the torus ${\rm T}$ on ${\mathcal X}$ with the action on 
${\mathcal S}$ provided by the action of ${\rm T}$ 
on the surface ${\mathcal N}$.  

In Section 7 we prove a weaker statement: ${\mathbb S}$ is 
a finite cover over the generic part of ${\mathcal S}$. 
It implies the independence of the Hamiltonians. 
%}
%
%
%

\subsection{Analogies between dimers, Teichm\"uller theory and cluster varieties.}

Cluster Poisson varieties provide a framework for study of both (classical and higher) Teichm\"uller theory 
\cite{FG} 
and theory of dimers. Here is a dictionary relating key objects in 
these three theories. 
\begin{center}
\begin{tabular}{ c | c | c }
\bf{Dimer Theory}&{\bf Teichm\"uller Theory}&{\bf Cluster varieties}\\
\hline\hline
Convex integral polygon $N$&Oriented surface $S$ &\\
&with $n>0$ punctures&\\
\hline
Minimal bipartite graphs&ideal triangulations of $S$&seeds\\
on a torus &&\\
\hline
spider moves of graphs&flips of triangulations&seed mutations\\\hline
Face weights&cross-ratio coordinates&Poisson cluster \\
&&coordinates\\\hline
Moduli space of spectral&Moduli space of framed&cluster Poisson variety\\
data on toric surface $N$&$\PGL(2)$ local systems &\\\hline
simple Harnack curve + divisor&complex structure on $S$&\\\hline
Moduli space of &Teichm\"uller space of $S$&positive real points of\\
simple Harnack curves + divisors&&cluster Poisson variety\\\hline
Tropical Harnak curve&Measured lamination&\\
with divisor&&\\\hline
Moduli space of tropical&space of measured&real tropical points of\\
Harnack curve + divisors&laminations on $S$&cluster Poisson variety\\\hline
Dimer integrable system&Integrable system related to&\\
&pants decomposition of $S$&\\\hline
Hamiltonians&Monodromies around loops&\\
&of a pants decomposition&\\\hline
%Resistor networks &  & Cluster monomial  \\ 
%Lagrangian subvariety&& Lagrangian subvariety\\ \hline
\end{tabular}\end{center}

What distinguishes these two examples --  the dimer theory and the Teichm\"uller theory -- 
from the general theory of cluster Poisson varieties is that 
in each of them the set of real / tropical real points of the relevant cluster variety 
has a meaningful and non-trivial interpretation as the moduli space of some geometric objects.

Here by moduli space of certain objects related to the toric surface ${\mathcal N}$
 we  mean the space 
parametrizing the orbits of the torus ${\rm T}$ acting on the objects. 
For example,  the  moduli space of spectral data means the  space 
${\mathcal S}/{\rm T}$. So combining results of \cite{GK} 
with Theorem \ref{1,1} would imply that the latter 
corresponds to the 
hypersurface ${\mathcal X}_N^0$ in the cluster Poisson variety ${\mathcal X}_N'$.

\paragraph{Discrete cluster integrable systems in Teichm\"uller theory.} 

Consider the moduli space 
${\mathcal X}_{\PGL_2, S}$ of framed $\PGL_2$-local systems on $S$ \cite{FG}. 
If $S$ has no punctures, it is just the moduli space of $\PGL_2$-local systems on $S$, which  
has the classical symplectic structure. If $S$ is a surface with punctures,  the moduli space 
${\mathcal X}_{\PGL_2, S}$ has a cluster Poisson variety structure ({\it loc. cit.}). 
Take a collection of loops $\{\alpha_i\}$ 
providing a pants decomposition of $S$. 
The monodromies of local systems over  the loops $\alpha_i$ 
commute under the Poisson bracket 
and, together with  the Casimirs given by the monodromies around the punctures, provide an 
 integrable system. When $S$ is a surface with punctures it is 
a cluster integrable system. 

Furthermore, the Dehn twists along the loops $\alpha_i$ generate a commutative subgroup 
$\Z^{3g-3}$ in the modular group $\Gamma(S)$ of the surface $S$, which commutes 
with the Hamiltonians and acts by cluster Poisson automorphisms. 
So we get a discrete cluster integrable system. 

\vskip 3mm
We continue to develop these analogies in \cite{GK}. 
Yet parts of this dictionary  are still missing. In particular, it would be very interesting 
to construct a canonical basis -- in the spirit of the Duality Conjectures from 
\cite{FG1} -- in the space of regular 
functions on the cluster variety ${\mathcal X}_N$.

\subsection{Further perspectives.} 
We assigned to a Newton polygon $N$ a cluster integrable system. 
Here is another way to assign to $N$ an integrable system: 

1. Newton polygons $N$, considered up to translation, 
parametrize smooth toric Calabi-Yau threefolds. 
Namely,  the canonical bundle ${K}_{\mathcal N}$  of the toric surface ${\mathcal N}$ assigned to $N$ 
is a toric Calabi-Yau threefold: 
the multiplicative group ${\mathbb G}_m$ acts 
on the fibers, so the torus ${\rm T} \times {\mathbb G}_m$ acts on ${K}_{\mathcal N}$.  
Alternatively,  it can be described as follows. 
Put a Newton polygon $N$ in the $z=1$ plane in the lattice $\Z^3$, 
and make a cone over it. The corresponding toric threefold is a Calabi-Yau. 
Every smooth quasiprojective toric Calabi-Yau threefold is obtained this way. 
See \cite{IU} for the connections between dimer models and toric Calabi-Yau threefolds. 

2. Any Calabi-Yau threefold $Y$ gives rise to an $N=2$ supersymmetric Yang-Mills ($N=2$ SUYM) theory on $\R^4$.  
Namely, consider a IIB type
 string theory in $10$-dimensional space $Y(\C) \times \R^4$ 
and compactify it on 
$Y(\C)$. 

3. An $N=2$ SUYM theory on $\R^4$ gives rise to an algebraic integrable system -- 
see \cite{NS} and references there. 

Combining these three steps we conclude that any Newton polygon $N$ 
gives rise to an algebraic integrable system. 
It is natural to conjecture that it is the dimer cluster 
integrable system assigned to $N$. 
 It would be interesting to prove this, and understand the role 
of the cluster structure, and in particular of quantum integrability.   

\subsection{Historical remarks}

Several of the concepts we use are present with different citations 
in the literature on the dimer model and the literature on 
cluster algebras/cluster varieties. 

Zig-zag paths were defined in the dimer model in \cite{Kenyon.dirac} 
but are implicitly present in work of Baxter
on the Yang-Baxter equation in the $6$-vertex model, see e.g. \cite{Baxter}. 
They also occur in a more general form, for ``bicolored" planar graphs, in \cite{Postnikov}.

Elementary transformations for dimers and in particular the spider move were first defined by Kuperberg
in the 90s
(using the term ``urban renewal" which was coined by Propp), see \cite{Ciucu, KPW}. 
In the cluster algebra literature these are sometimes referred to as Postnikov moves. 
The analogous elementary
transformations for electrical networks (the star-triangle move, also known as Y-Delta move)
appeared in Kennelly \cite{Kennelly}. 

The notion of minimal network appeared for resistor networks in
\cite{CdV} and \cite{CIM}, and for dimers/clusters in \cite{Thurston} and \cite{Postnikov}.
 \vskip 2mm

Cluster algebras were introduced in \cite{FZI}. 
A family of Poisson structures on cluster algebras 
was introduced in \cite{GSV}. 

Cluster Poisson varieties were introduced in \cite{FG1}. 
They are in duality with cluster algebras. 
The field of rational functions on a cluster Poisson variety has
a natural non-commutative $q$-deformation, see Section 3 of  {\it loc. cit.}. 
The cluster modular group, defined there as well, acts by automorphisms of 
the cluster Poisson variety, its $q$-deformation, and the cluster algebra. 
In general it does not, however, preserve the Poisson structures 
introduced in \cite{GSV}. Non-commutative 
 $q$-deformations of the cluster algebra corresponding to 
these Poisson structures were 
defined in \cite{BZ} and called quantum cluster algebras. 

Cluster Poisson varieties are  related to cluster algebras as follows. 
The subvariety of a cluster Poisson variety obtained 
by equating all monomial Casimirs to $1$ is a distinguished symplectic leaf. 
The restrictions of the cluster Poisson coordinates to that leaf can be 
realized as ratios of cluster monomials in the corresponding cluster algebra, considered already in \cite{GSV}. 
The obtained elements generate 
a subfield in the field of fractions of the cluster algebra.  
Its transcendentality degree is smaller than the 
one of the  field of fractions of the cluster algebra 
by the number of  
independent Casimirs, which coincides with the codimension 
of the generic symplectic leaf in the cluster Poisson variety. 
\medskip

In \cite{GSV2} Gekhtman, Shapiro and Vainshtein study a related model of paths in
directed networks on an annulus, originally defined by Postnikov \cite{Postnikov} for planar
networks.  The relation with our model is not completely
straightforward,  but in the cases of interest we believe the models are essentially equivalent.
The results of \cite{GSV2}
have some overlap with ours: in particular they construct the same Poisson structure,
but do not discuss integrability (the annular system is not integrable). 
It would be worthwhile to study more closely the 
connection between these models.

\medskip

The octahedron recurrence, although known as  
Hirota's bilinear difference equation, goes back to Hirota 
\cite{Hir}. Thecube recurrence, 
also known as the discrete BKP equation or 
Hirota-Miwa equation, goes 
back to the work of Miwa \cite{Miwa}, and was rediscovered and studied 
several times afterwords, see \cite{FZL},  \cite{CS}.

\subsection{The structure of the paper.}

In Section \ref{sec3} we study the set  of minimal 
bipartite graphs $\Gamma$ with  a given Newton polygon. 

In Section \ref{Pdefsection} we define a  modified 
partition function, and prove that it is a sum of commuting
Hamiltonians, 
providing a quantum integrable system. 

In Section \ref{spidermovesection} we show that the spider move gives rise to a 
cluster Poisson transformation. We show that these transformations 
preserve the partition functions. 
We also briefly recall, for the convenience of the reader, 
basic definitions of quantum cluster varieties.

In Section \ref{resistornetsection} we study the dimer model of special kind corresponding 
to an arbitrary torus graph $G$, describing  a resistor network on $G$. 

In Section \ref{HBDEsection} 
we construct discrete cluster integrable systems related to the 
dimer models on square grids and dimer models related to hexagonal resistor
networks.

In Section \ref{AGsection} we turn to the algebraic-geometric 
perspective of the story. 
As a biproduct we prove the independence of the Hamiltonians. 

%In Section \ref{tropicalsection} 
%we interpret the space of the real tropical points ${\mathcal X}_N(\R^t)$ as the moduli space of 
%tropical Harnack curves with special divisors. 

In Section \ref{sec2} we give a local calculation of 
 the Poisson structure on the space ${\L}_\Gamma$, and generalize the story to 
bipartite graphs on open surfaces.

\paragraph{Acknowledgments.} We are very grateful to 
Vladimir Fock, Francois Labourie, Grisha Mikhalkin, Michael Shapiro, Andy Neitzke, Andrei Okounkov, Dylan Thurston, 
Masahito Yamazaki for fruitful discussions. We thank the referees for useful comments.
A.G. was supported by NSF grant DMS-1059129. He is grateful to Aspen Center of Physics and IHES for hospitality; R.K. is supported by the NSF.

\section{Newton polygons and minimal bipartite graphs on surfaces} \la{sec3}

\subsection{Zig-zag paths and minimal bipartite surface graphs.}
Let $E=v_1v_2$ be an oriented edge of an oriented ribbon graph. 
There is a notion of the left and right edges adjacent to $E$: 
the left (respectively the right) edge to $E$ 
 is the previous (respectively the next) for the cyclic order at $v_2$.

Let $\Gamma$ be a bipartite graph on a surface $S$. The \emph{medial graph} 
of $\Gamma$
is the $4$-valent graph with a vertex for each edge of $\Gamma$ and an edge whenever
two edges of $\Gamma$ share a vertex and are consecutive in the cyclic
order around that vertex. A \emph{strand} or \emph{zig-zag strand}
is a path in the medial graph passing consecutively through the edges of a zig-zag path
of $\Gamma$ as in Figure \ref{di25}; equivalently it is a path in the medial graph
which goes ``straight" at each (degree-$4$) vertex.
For every edge $E$ of $\Gamma$ there are exactly two  
strands passing through $E$, see Figure \ref{di25}. 
For every $m$-valent 
vertex $v$ of $\Gamma$ there is a unique  $m$-gon formed by the parts of the strands intersecting the 
consecutive pairs of the edges incident to $v$, see the right picture on 
Figure \ref{di25}. We call it the arc $m$-gon surrounding $v$. 

\begin{figure}[ht]
\centerline{\epsfbox{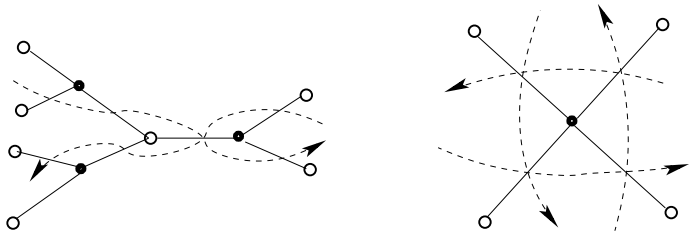}}
\caption{Zig-zag strands and surrounding $4$-gon.\label{di25}}\end{figure} 

\bd
A bipartite surface graph $\Gamma$ is \emph{minimal}  
if, in the universal cover $\tilde \Gamma\subset\tilde S$, strands have no self-intersections, and there are no parallel bigons, 
i.e. pairs of strands  intersecting at two 
points and oriented the same way -- see the left pictures on  Figure \ref{di26}.
\ed
\begin{figure}[ht]
\centerline{\epsfbox{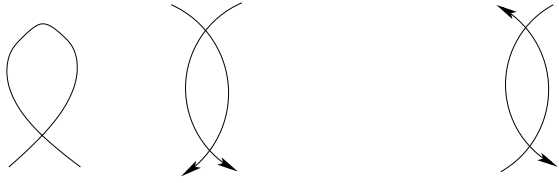}}
\caption{Minimal bipartite surface graphs have no self-intersections and parallel bigons (left); antiparallel bigons (right) 
are allowed. \label{di26}}
\end{figure}

\subsection{From Newton polygons to collections of admissible graphs.} 

Let $N$ be a convex polygon in $\R^2$ with vertices in $\Z^2$, 
considered up to a translation by a vector in $\Z^2$, called below a {\it Newton polygon}. 
It is described by a collection $\{e_i\}$ of 
integral primitive vectors with the zero sum, obtained as follows. 
Take the set of all integral points on the boundary of $N$. 
It is cyclically ordered by the counterclockwise orientation of the boundary.  
The vectors $e_i$ are obtained by connecting each of these points with the next one. 
So the number of integral primitive vectors on the boundary coincides with the number of integral points on the boundary. 

Consider the torus ${\T}   = \R^2/\Z^2$. 
Each vector $e_i$ determines a homology class $[e_i] \in H_1({\T}, \Z)$. 
There is a unique up to translation geodesic representing this class --
 the projection of the vector $e_i$ to the torus. 

Take a family of oriented loops $\{\alpha_i\}$ 
on the torus $\T$ in generic position 
 such that 
the isotopy class of the loop $\alpha_i$ coincides with the isotopy class  of the geodesic 
representing $[e_i]$. 
The union 
$$
G = \cup_i \alpha_i
$$
is an oriented  graph on the torus $\T$. 

\bd
A graph $G$ given by a collection of oriented loops on an oriented surface $S$
 is an \emph{admissible minimal graph}
if the following two conditions hold: 

\begin{enumerate} \la{con1}
\item Going along any of the loops $\alpha_i$, the directions 
of the loops intersecting it alternate. 

\item The total number of intersection points is minimal. 
\end{enumerate}
\ed

The graph $G$ provides a decomposition of the surface ${\rm S}$ into 
a union of polygons $P_j$. The sides of the polygons inherit 
orientations from the loops.   The admissibility condition 1) is equivalent 
to the following: 
 
\begin{enumerate} 
\item[1$'$.] The sides of any of the polygons $P_i$ are either oriented the same way, 
clockwise or counterclockwise, or their directions alternate.  
\end{enumerate}

\subsubsection{From admissible surface graphs to bipartite surface graphs.} \la{sec3.3.1}

The admissibility condition $1')$ 
for a surface graph implies 
that there are three types of domains $P_i$: 
\begin{enumerate}

\item {\it Black domains}: the sides are oriented counterclockwise. 

\item {\it White domains}: the sides are oriented clockwise.

\item {\it Faces}: the directions of the sides alternate. 
\end{enumerate}

Therefore an admissible graph $G$ on $S$ gives rise to a bipartite graph $\Gamma$ on $S$: 
the black (white) vertices of $\Gamma$ are the black (white) domains in ${S}-G$. 
Edges connecting black and white vertices correspond to  the 
vertices of the graph $G$. Indeed, every vertex of $G$ is $4$-valent (since the loops $\alpha_i$ are 
in generic position), and 
locally looks as an intersection of two arrows. So it determines the black and the white domains. 

\subsubsection{Minimal triple point diagrams \cite{Thurston}.} \label{TPDs}

A {\it triple point diagram}\footnote{This
is called in \cite{Thurston} a connected triple point diagram.} is a collection of 
oriented arcs 
in the disc with the ends at the boundary of the disc considered modulo isotopies, such that

\begin{enumerate}
\item Three arcs cross at each intersection point. 
\item  The endpoints of the arcs are distinct points on the boundary of the disc. 
\item The orientation of the arcs induce consistent orientations on the complementary regions. 
\end{enumerate}

Observe that  each complimentary region to a triple point diagram is a disc.

The orientation condition 3) is equivalent to the following:
\begin{enumerate}
\item[3$'$.] Orientations of the arcs alternate ``in'' and ``out'' around each intersection point. 
\end{enumerate}

Let $n$ be the number of arcs. 
Then each of the $2n$ endpoints is oriented ``in'' or ``out'', 
and these orientations alternate as we move around the boundary. 
Furthermore, given a collection of $2n$ alternatively oriented points on the boundary, 
a triple crossing diagram inducing such a collection provides a matching between 
the ``in'' and ``out'' points.

A triple point diagram is {\it minimal} if the number of its intersection points is not bigger 
than for any other diagram 
inducing  the same matching on the boundary. 

\bt[\cite{Thurston}] \la{DT}
1) In a disc with $2n$ endpoints on the 
boundary, all $n!$ matchings of ``in'' endpoints with ``out'' endpoints are achieved by 
minimal triple point diagrams.

2) Any two minimal triple point diagrams with the same matching on the 
endpoints can be related by a sequence of $2 \leftrightarrow 2$ moves, shown on Figure \ref{di18}.
\et

\begin{figure}[ht]
\epsfxsize150pt
\centerline{\epsfbox{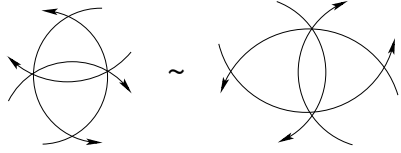}}
\caption{The $2$ - $2$ move.\label{di18}}
\end{figure}

\subsection{From minimal triple point diagrams to minimal bipartite surface graphs.} 
Each crossing point of a triple point diagram has a canonical resolution, 
obtained by moving a little bit one of the strands passing through the point so that 
the emerging  little triangle is oriented by the strands 
the same way as the surface -- 
the counterclockwise resolution shown on Figure \ref{di5b}. 
We apply this procedure to every intersection point of a triple point diagram. 

The complimentary domains for the obtained configuration of arcs are of three possible types: 
{\it black domains}, {\it white domains} and {\it faces}, according to definitions (1)-(3) in section \ref{sec3.3.1}. 
Namely, the black and white domains are the ones consistently 
oriented---counterclockwise and clockwise respectively---while the 
directions of the sides of the remaining domains, 
the faces, alternate.  
So we arrive at  a bipartite graph associated to a minimal triple point diagram, 
see Figure \ref{di5b}. Its black vertices correspond to the triple 
crossing points, and thus  
are $3$-valent. The white vertices can have valencies two, three or higher. 

Let us shrink all edges incident to  
white $2$-valent vertices (and remove the white vertex, gluing its two neighbors into
a single vertex). The bipartite graph obtained can have 
both black and white vertices of valencies four or higher. 

\begin{figure}[ht]
\centerline{\epsfbox{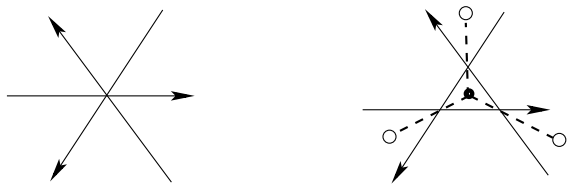}}
\caption{\label{di5b}The bipartite graph obtained by counterclockwise resolution of a triple crossing diagram.}
\end{figure}

Let us call two bipartite graphs 
{\it similar} if the graphs obtained by shrinking their $2$-valent 
white vertices coincide. There is a canonical bijection between 
the faces as well as between the perfect matchings for similar bipartite graphs. 
So similar bipartite graphs have the same partition function, 
discussed in Section \ref{pttnfnsection}. 

\begin{remark}
There is another option: resolving all triple crossings clockwise. 
If a triple crossing diagram 
was associated to a Newton polygon $N$, the counterclockwise resolution is the same as 
the clockwise resolution of the diagram associated to  the polygon 
$N$ rotated by $180$ degrees. Indeed, the rotation 
changes the orientations of all strands of 
the triple crossing diagram. Then resolving clockwise 
is the same as resolving counterclockwise the original triple crossing diagram. 
\end{remark}

\subsection{Boundary intervals of the Newton polygon and zig-zag paths.}

Recall that a zig-zag path has a natural orientation
 such that it turns right at the black vertices. 
Therefore the  homology class of a  zig-zag path is well defined. It 
is a natural invariant of a zig-zag path. 
On a minimal bipartite graph $\Gamma$ zig-zag path has no selfintersections, and so 
it is naturally a simple loop.

\bl \la{4.18..10.5}
Let $N$ be a Newton polygon, and $\Gamma$  a minimal bipartite graph on the torus 
assigned to $N$. The homology class of a zig-zag loop is a primitive vector on a  side 
of $N$, whose orientation is induced by the orientation of $N$. 
The cyclically ordered set of zig-zag paths with a given homology class 
is identified with the cyclically ordered set of 
boundary intervals on the corresponding side of $N$. 
So  the number of zig-zag paths on $\Gamma$ equals to the number of integral boundary points 
on $N$. 
\el

\begin{proof} It is clear from Figure \ref{di16} that  
zig-zag paths on the bipartite graph arising from a triple crossing diagram correspond to 
the strands of the triple crossing diagram. On the other hand 
by construction the latter correspond to the primitive vectors of 
the boundary of the Newton polygon. 
\end{proof}
 
\begin{figure}[ht]
\centerline{\epsfbox{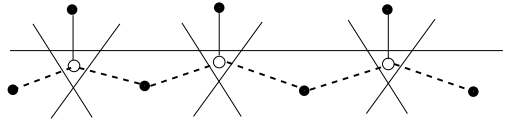}}
\caption{Zig-zag paths correspond to strands of the triple crossing diagram. \label{di16}}
\end{figure}

\subsection{Elementary transformations of bipartite surface graphs.} 

Figure \ref{di23} shows an elementary transformation $\Gamma_1 \to \Gamma_2$
of bipartite graphs, called a {\it spider move}. 
(There are two versions of the move depending on the color of the two 
interior vertices.)
We will see below that for appropriate choice of weights
the spider move does not change the partition function for the dimer model. 
\begin{figure}[htbp]
\centerline{\epsfbox{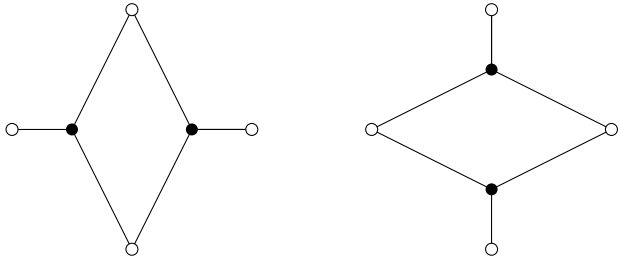}}
\caption{A spider move $\Gamma_1 \to \Gamma_2$ of bipartite graphs.
The white vertices are attached to the rest of the graph; the black vertices 
are only attached to white vertices as shown.\label{di23}}
\end{figure} 

Figure \ref{di23} shows a pair of elementary transformations 
of bipartite graphs: splitting a vertex of valency $k>3$ and 
collapsing a $2$-valent vertex. We will see below that it
 does not change the partition function. 

\subsection{From minimal bipartite surface graphs to minimal triple point diagrams.} 
A minimal bipartite graph can be transformed back into a triple point diagram 
in two steps. 

{\it Step 1}. By a sequence of moves inverse to the shrinking of a $2$-valent white vertex, 
we replace an arbitrary  bipartite graph by a bipartite graph with black vertices of valency $3$, 
see Figure \ref{di24}.

\begin{figure}[ht]
\centerline{\epsfbox{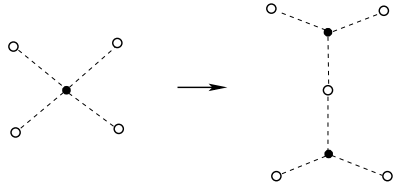}}
\caption{Replacing a $4$-valent black vertex by two $3$-valent black vertices.
A similar vertex splitting splits any $k$-valent black vertex into $k-2$ $3$-valent ones.
\label{di24}}
\end{figure}

{\it Step 2}. 
Take a minimal bipartite graph $\Gamma$ on $S$ with the black vertices of valency $3$. 
Draw  zig-zag strands on $S$ assigned for all edges of $\Gamma$ --  two per edge. 
So each vertex of $\Gamma$ is surrounded by an oriented arc $m$-gon. 
Then there is a unique way to shrink the counterclockwise triangles into points, getting a triple point diagram. 
This procedure is evidently inverse to the one we used to get a bipartite graph from 
a triple point diagram. 

\begin{figure}[ht]
\epsfxsize70pt
\centerline{\epsfbox{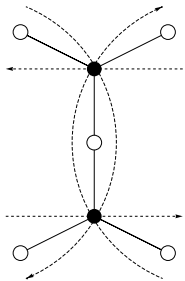}}
\caption{\label{di17}From bipartite graphs (solid) to triple point diagrams (dashed).}
\end{figure} 
 
\subsection{Graphs from polygons} 

Here is the main result of this section.

\bt \la{3.21.10.1}
i) For any Newton polygon $N$ there exists a minimal admissible graph $G$ on a torus
associated with $N$. It produces a minimal bipartite torus graph $\Gamma$
associated with $N$.

ii) Any two minimal bipartite graphs 
on a torus associated with $N$ are 
related by a sequence of spider moves and shrinking / expanding of $2$-valent vertices.
\et

This result is illustrated in an example in Figure \ref{NtoG}.
\begin{figure}[ht]
\centerline{\epsfxsize70pt
\epsfbox{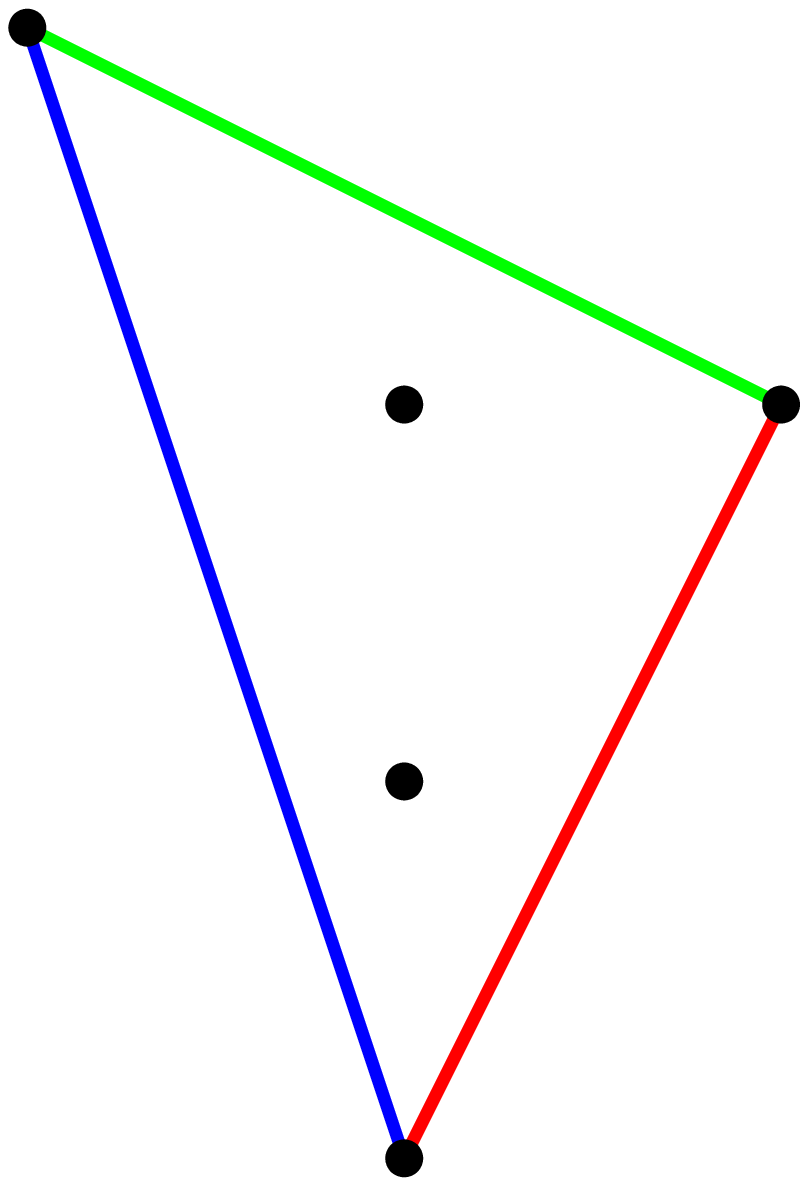}$\rightarrow$\epsfxsize90pt\epsfbox{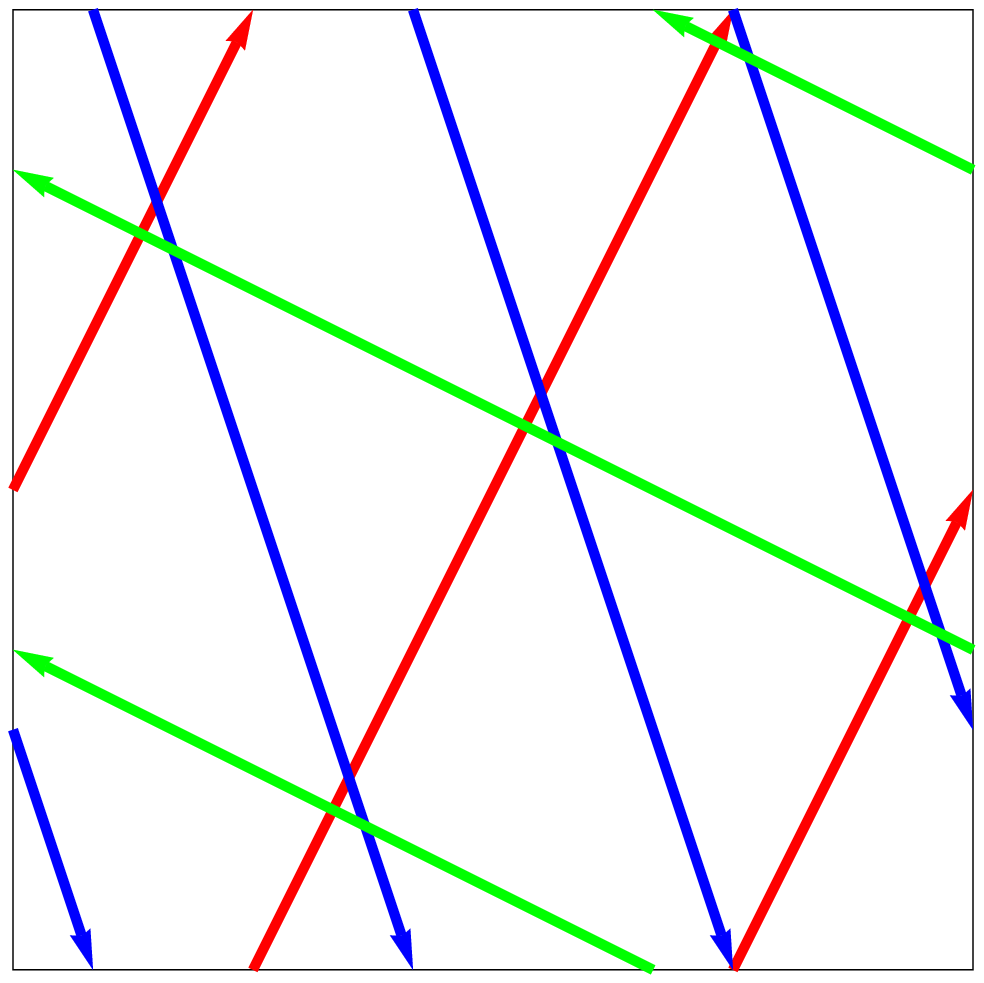}$\rightarrow$\epsfxsize90pt\epsfbox{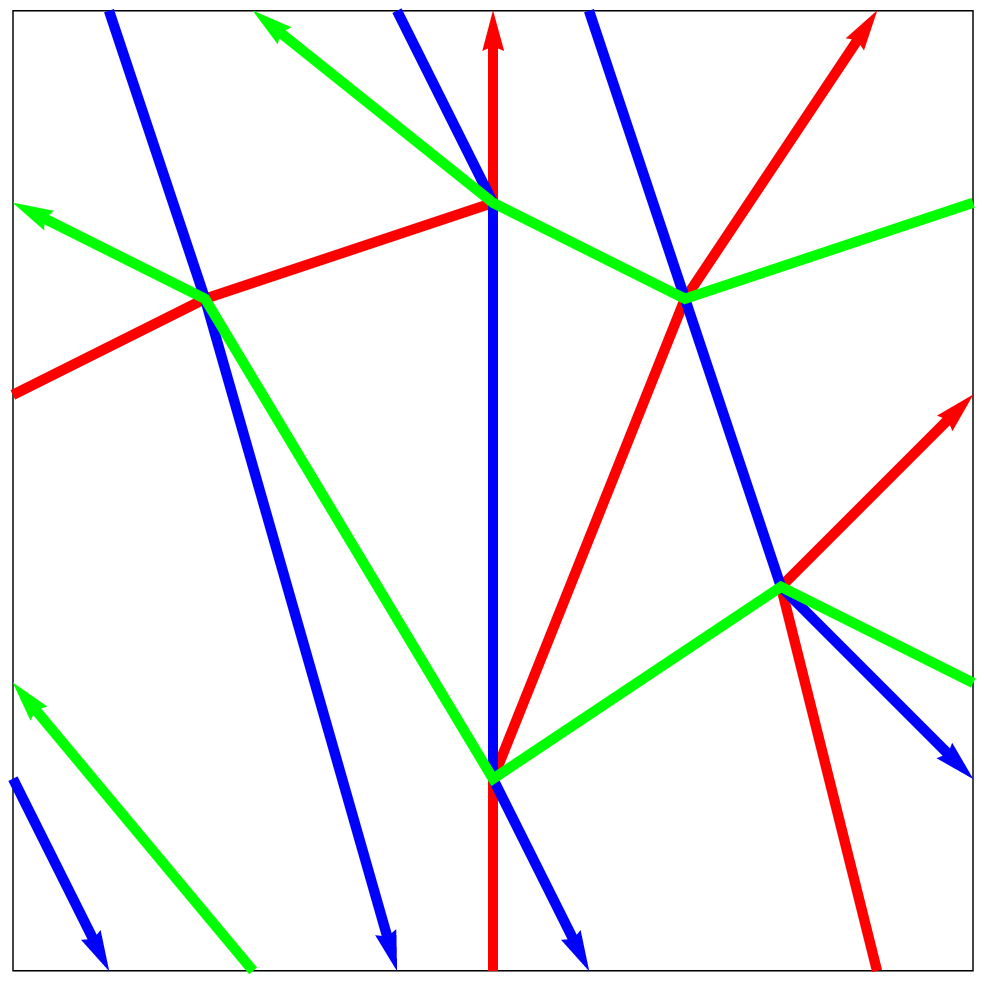}$\rightarrow$\epsfxsize90pt\epsfbox{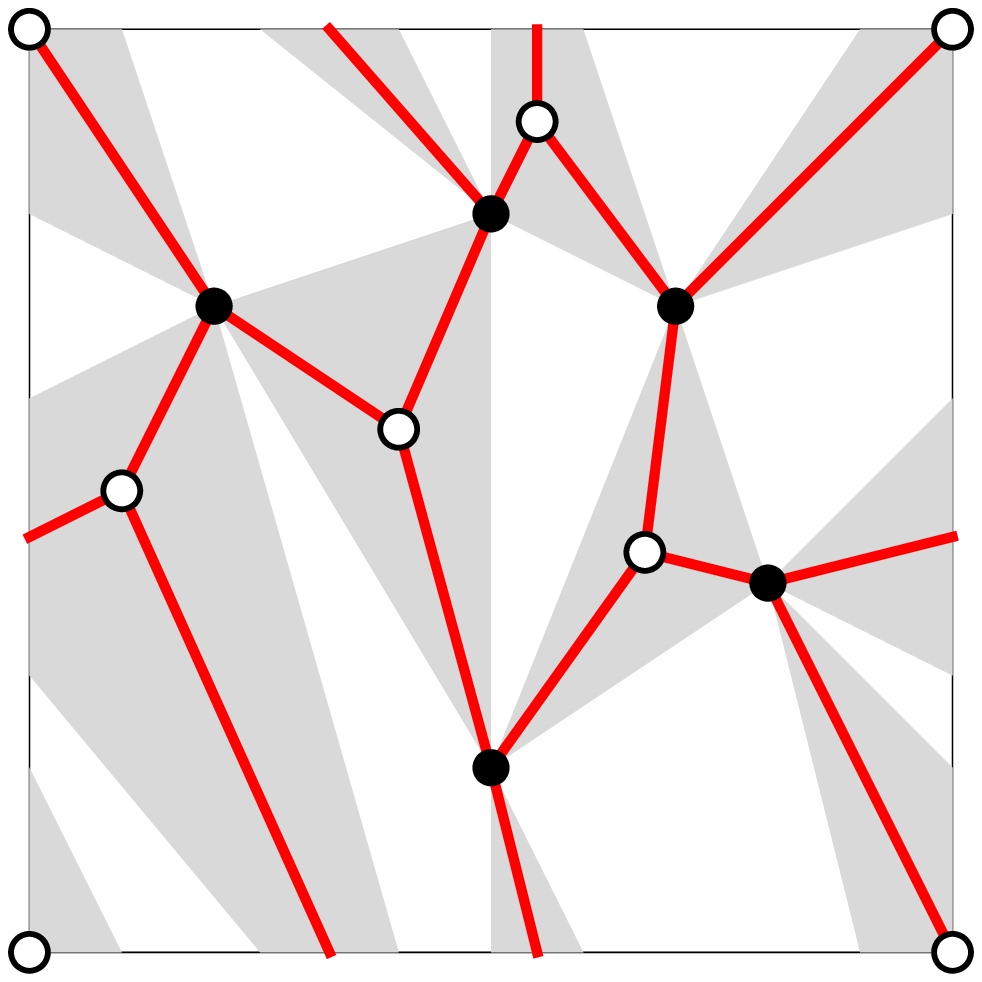}}
\caption{\label{NtoG}Procedure for going from a convex integer polygon to a bipartite
torus graph: 1. A convex integer polygon. 2. The corresponding geodesics
on a torus. 3. Isotoping the geodesics to a triple-crossing diagram having specified
order of intersection with the boundary of the square. 4.
The resulting bipartite graph.}
\end{figure} 

\begin{proof}
Let us glue a torus $\T$ from a rectangle $R$ by identifying opposite edges.
Consider a finite collection of oriented simple loops $\alpha_i$ on $\T$, whose homology classes
correspond to the primitive edge vectors of $N$, read counterclockwise
around $\partial N$. We choose the loops
so that the total number of intersections of any loop with an edge of $R$ is minimal:
this can be accomplished by starting with Euclidean geodesics (but we will isotope
these around preserving the number of intersections with $R$).

Since $\partial N$ is a closed path the total homology class
$\sum_i[\alpha_i]$ represented by these loops is zero. So the total intersection number
of these loops with any edge of the rectangle $R$ is zero.
Therefore we can isotope the loops,
by moving them one through the other, so that the intersection points of the loops with an edge of $R$ alternate
in orientation: outgoing loops interlaced with ingoing loops.
During this isotopy we do not add any more crossings of loops with $\partial R$,
and furthermore we do not exchange two outgoing or two ingoing loops, that is,
the order of outgoing paths and the order of ingoing paths does not change.
We can do this simultaneously for all edges of $R$, so that the crossings
of the loops with $\partial R$ alternate around the entire boundary of $R$.
Note that we have not introduced any new crossings of "outgoing" paths or crossings 
of "ingoing" paths, only crossings of one outgoing and one ingoing path.
Then we can apply the part 1) of Theorem \ref{DT}, and
isotope the paths $\alpha_i$ into
a minimal triple point diagram while fixing the boundary points on $\partial R$.

Note that each path $\alpha_i$ intersects the boundary with a consistent
orientation (consistent with being a straight geodesic). Thus no path $\alpha_i$
can self-intersect: a "monogon" on $\T$ is homotopically trivial and so must either lie entirely in the interior of $R$
(disallowed by minimality of the triple point diagram)
or intersect one of the sides of $R$ with
both orientations (disallowed by construction). Furthermore in $\T^2$ there are no
parallel bigons: such a bigon cannot lie in the interior of $R$ by minimality
of the triple point diagram, and if it crosses the boundary, its vertices correspond to crossings of two outgoing
or two ingoing paths which are not present in the original geodesic path diagram (in other words,
along any isotopy back to a straight geodesic diagram there must occur at some point
an uncrossing of two outgoing paths, a contradiction).

\begin{figure}[ht]
\epsfxsize240pt
\centerline{\epsfbox{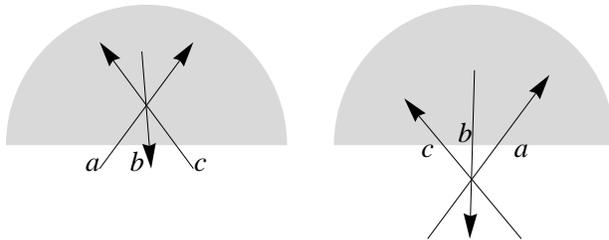}}
\caption{\label{transposition}Permuting the boundary intersections.}
\end{figure}

The components of the intersection of a loop with the interior of $R$ will be called strands.
Thanks to the part 2) of Theorem \ref{DT},
any two minimal triple point diagrams whose strands intersect $\partial R$ in the same order are related by $2 \leftrightarrow 2$ moves. We claim that by isotoping the loops
on the torus $\T$
any two minimal triple point diagrams for the same polygon $N$ are related by
$2 \leftrightarrow 2$ moves.  Consider the set of strands intersecting
an edge $E$  of $R$. Some of these strands may be part of the same loop,
or part of isotopic loops. The relative order of intersection of these subsets of strands
with $\partial R$ is fixed.
However the strands which are parts of different non-isotopic geodesics do not have
a unique relative order along $\partial R$. We need to show that by isotopy and
$2 \leftrightarrow 2$ moves we can permute the order of intersection of these
sets of strands with $E$.

Let $a,c$ be consecutive ``ingoing'' strands on $\partial R$
which by the alternating property necessarily are separated by a single
``outgoing'' strand $b$, so that $a,b,c$ are three
consecutive intersections of strands with $\partial R$. Suppose that
the strands ending at $a,c$ cross inside
the rectangle $R$.
We show that using $2 \leftrightarrow 2$ moves and isotopy
on $\T$ we can rearrange these strands so that they intersect $\partial R$
consecutively in the reverse order $c,b,a$
without changing the order of the other
intersections. In other words, we can arbitrarily permute
the ``ingoing'' strands by doing a sequence of these transpositions $\{a,c\}\to \{c,a\}$
as long as they intersect in $R$,
and similarly for the ``outgoing'' strands.
To accomplish this transposition, \cite{Thurston} shows that 
we can use $2 \leftrightarrow 2$ moves so that the triple point
diagram inside $R$ has the three strands at $a,b,c$ meeting just adjacent to the boundary (that is,
before meeting any other strands). Then one can simply isotope this triple crossing across the boundary
which has the effect of switching the intersections order of the strands to $c,b,a$
as desired. See Figure \ref{transposition}.

Now given a triple point diagram one can construct the minimal bipartite graph 
as above. Observe that there are two different $2 \leftrightarrow 2$ move diagrams: 
the one is obtained from the other by reversing the strand orientations. 
Resolving their triple intersection points in the standard (counterclockwise)  way, we get 
two elementary moves of bipartite graphs: the spider move on Figure \ref{di19} and 
the move on Figure \ref{di20} which amounts to shrinking the edge with a $2$-valent (white) vertex, and expanding it the other way. 
\end{proof}

\begin{figure}[htbp]
\epsfxsize230pt
\centerline{\epsfbox{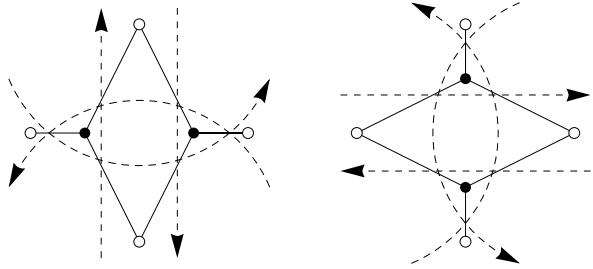}}
\caption{\label{di19}This $2 \leftrightarrow 2$ move amounts to a spider move of bipartite graphs.}
\end{figure}

\begin{figure}[htbp]
\epsfxsize230pt
\centerline{\epsfbox{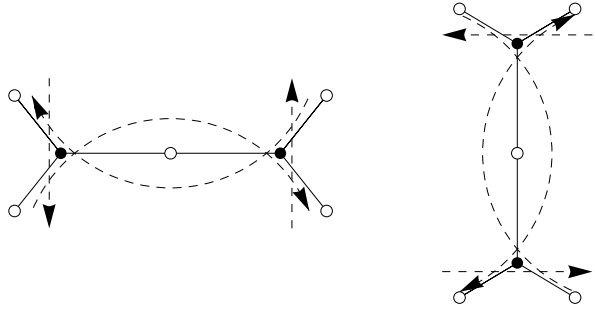}}
\caption{\label{di20}This $2\leftrightarrow 2$ move amounts to collapsing the edges incident to a white $2$-valent vertex and expanding the other way.}
\end{figure}

\section{Integrability of the dimer model} \label{Pdefsection}

\subsection{Edge weights and line bundles with connections on a graph}\la{edgeweights}
A vector bundle with connection on a graph $\Gamma$ is the following 
data: we assign to each vertex $v$ of $\Gamma$  
a vector space $V_v$, and to each oriented edge 
$E$ from a vertex $v_1$ to a vertex $v_2$ an isomorphism
$$
i_{E}: V_{v_1} \to V_{v_2}
$$ 
called parallel transport, 
so that $i_{-E} = i^{-1}_{E}$, where $-E$ is the edge $E$ with 
the opposite orientation.  
The monodromy of the connection along a loop is the composition
of the parallel transports around the loop.  
Two vector bundle with connections $\{V, i_E\},\{V', i'_E\}$ are isomorphic (gauge equivalent) if there are isomorphisms
$\psi_v:V_v\to V'_v$ such that $i'_E = \psi_{v_2}\circ i_E\circ\psi_{v_1}^{-1}$ for all
edges $E=v_1v_2$.

The moduli space  ${\L}_\Gamma$ is the space of 
line bundles with connections  on a graph $\Gamma$ modulo isomorphisms. 
Here is another  description of the moduli space ${\L}_\Gamma$. 
Choose a non-zero vector $e_v$ in each space $V_v$. Then 
a line bundle with connection on $\Gamma$  amounts to a collection of non-zero numbers, the {\it edge weights} 
$a_E$, assigned to the oriented edges $E$ of the graph, 
so that $a_{-E} = a^{-1}_{E}$:
$$
i_{E}e_{v_1} = a_{E}e_{v_2}, \qquad E= v_1 \to v_2.
$$
Another choice of base vectors $e'_v= b_ve_v$ gives rise to 
a different set of edge weights $a'_E$, related to the original one by a gauge 
transformation:  $a'_E = b_va_Eb_{v'}^{-1}$. 
So 
$$
{\L}_\Gamma = \frac{\mbox{edge weights}}{\mbox{gauge 
transformations}}.
$$

Given a trivializations $\{e_v\}$ of a line bundle with connection $\{V, i_E\}$, 
we can encode the connection by the Kasteleyn operator $K$, see Section \ref{1.4.1}, 
which in this case is just a matrix. 
However the determinant ${\rm det}(K)$ is not gauge invariant, and so is not 
a function on the moduli space ${\cal L}_\Gamma$.

\subsection{The partition function and the Hamiltonians}\label{pttnfnsection}
\subsubsection{The weight of a dimer cover}

A {\it dimer cover} of a bipartite graph 
$\Gamma$, also known as a {\it perfect matching},  
is a collection  of edges of $\Gamma$ 
such each vertex is the endpoint of a unique edge in the dimer cover. 

Recall the determinant line ${\bf L}$, see (\ref{dl}). 
Given a connection, a dimer cover $M$ on $\Gamma$ provides 
an ${\bf L}$-valued function $W(M)$  on  
${\L}_\Gamma$, called  the {\it weight of $M$}. For
an edge $E=bw$, let $i_E$ be the parallel transport along $E$ of $L$. Then 
$i_E\in  V_{b}^* \otimes V_{w}$. 
The product of these vectors over the edges of $M$ is the weight:
$$
W(M) \in {\mathcal O}({\L}_\Gamma) \otimes {\bf L}.
$$
Precisely, choose an order of the edges $E_1, ..., E_N$ of the matching. Write 
$i_E = a^*_E \otimes b_E$. Then $W(M) := a^*_{E_1} \wedge ... \wedge a^*_{E_N} \otimes b_{E_1} 
\wedge\dots\wedge b_{E_N}$. It does not change when we alter the order of edges. 
The following result, due to Kasteleyn (in different language),  is well known. 
\bt[\cite{Kast}]\label{detK} We have
\be\label{sf}\det K = \sum_{M}{\rm sgn}(M)W(M)\in {\mathcal O}({\L}_\Gamma) \otimes {\bf L}\ee where the sum is over dimer covers and the
sign ${\rm sgn}(M)$ only depends on the choice of the Kasteleyn line bundle with connection  
(see Section \ref{1.4.1}) 
and the homology class
modulo $2$ of $M$ (defined in Section \ref{sec3.2.2}).
\et

\bd The \emph{partition function} is 
${\mathcal P}_{\Gamma}=\det K$.
\ed 

The partition function depends on our choice of $\kappa$,
which we fixed once and for all.

\subsubsection{The homology class of a dimer cover}\la{sec3.2.2}
A dimer cover $M$ on $\Gamma$ provides a function 
$$
\omega_M: \{\mbox{Edges of $\Gamma$}\} \to \{0,1\}.
$$ 
Its value on an edge is $1$ if and only if the edge belongs to $M$. 
We view it as a $1$-chain 
$$
[M]:= -\sum_E \omega_M(E)[E],
$$
where the sum is over all edges, and $[E]$ is the edge $E$ oriented from black to white. 
The  condition that $M$ is a dimer cover  means that 
$$
d [M] = \sum_{v}{\rm sgn}(v)[v]
$$
where the sum is over all vertices $v$ of $\Gamma$, and ${\rm sgn}(v)
=1$ for the black vertex $v$, and ${\rm sgn}(v) = -1$ for the white.  

If $M_1,M_2$ are dimer covers, $[M_1]-[M_2]$ is a $1$-cycle,
and thus each dimer cover defines an ``affine" homology class. It is more
convenient to define an absolute homology class for each dimer cover 
by fixing a chain
$\Phi$ such that $[M]-\Phi$
is a cycle (for all $M$). We do this in Section \ref{3.2.2}. 

\vskip 3mm
{\bf Remark}. 
A dimer cover $M_2$ trivialises 
the determinant line ${\bf L}$ by providing isomorphisms 
$V_b \to V_w$ for each edge $E=bw$. In Section \ref{3.2.2} 
we consider ``fractional'' dimer covers provided by 
weighted zig-zag paths, and  
use them in a similar way to trivialize ${\bf L}$.

\subsubsection{The $\alpha$-deformed partition function} \la{3.2.2}
For every edge of an oriented ribbon graph there are exactly two 
zig-zag paths containing the edge (one in which the edge is traversed in 
either orientation).
Let $E$ be an edge of $\Gamma$. 
Denote by $z^r$ (respectively $z^l$) 
the zig-zag path containing $E$ in which the white vertex of $E$
precedes the black (respectively follows the black). 
Let us consider a map 
\be \la{alpha1}
\alpha: \{\mbox{zig-zag paths on $\Gamma$}\} \lra \R/\Z.
\ee
It provides a function 
\be \la{alpha2}
\varphi_\alpha = \varphi:  \{\mbox{Edges of $\Gamma$}\} \lra \R, \qquad \varphi(E):= 
\alpha_r-\alpha_l.
\ee
where
$\alpha_r$ (respectively $\alpha_l$) is the evaluation  of the map
$\alpha$
on the zig-zag
path $z^r$ 
(respectively $z^l$) crossing the
edge $E$,  see  Figure \ref{di33}, and $\alpha_r-\alpha_l$ denotes the
length of the counterclockwise arc on $S^1=\R/\Z$ from $\alpha_l$ to $\alpha_r$.

\begin{figure}[ht]
\epsfxsize120pt
\centerline{\epsfbox{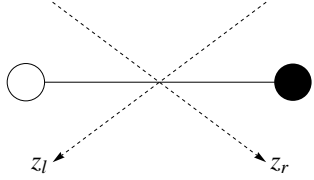}}
\caption{\label{di33}The weight is $\varphi(E) = \alpha_r-\alpha_l$.}
\end{figure} 

The strands, and thus the  oriented zig-zag paths,  are in bijection with the 
set $\{e_1,\dots,e_n\}$ of primitive edges of the boundary $\partial
N$ of $N$, and so inherit a circular order from $\partial N$. 

\bt \la{4.17.10.1}
\label{monotone1}
Let $X$ be the set of circular-order-preserving maps 
$$
\{e_1,\dots,e_n\} \to \R/\Z.
$$ 
For any map $\alpha \in X$, the function
$\varphi_\alpha$ satisfies
\be \la{4.17.10.111}
d \varphi_\alpha = \sum_{v}{\rm sgn}(v)[v].
\ee
\et

The result is more generally true for arbitrary triple crossing diagrams in a disk.

\begin{proof}
We need to show that $d\varphi_\alpha= 1$, respectively $-1$,  at a black, respectively white, vertex.
For this it suffices to show that at each vertex, the set of adjacent strands (the strands
intersecting edges emanating from the vertex) has the same circular order as the 
circular order of the edges containing the vertex. 
This follows from Lemma \ref{monotone} below. \end{proof}

\begin{lemma}\label{monotone}
Given a triple-crossing diagram in a disk, at any triple-crossing
the three intersecting strands occur in the same circular order as their
ingoing endpoints on the boundary of the disk.
\end{lemma}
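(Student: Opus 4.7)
My plan is to reduce the lemma to a standard topological fact about annuli. Pick a small disk $D_\varepsilon$ around $p$ so that inside it the three strands $s_1, s_2, s_3$ through $p$ are three transverse arcs meeting at $p$ with alternating in/out orientations on $\partial D_\varepsilon$; let $q_i \in \partial D_\varepsilon$ be the point where the ingoing half-strand of $s_i$ exits the small disk. By the local triple-crossing picture, the points $q_1, q_2, q_3$ appear in counterclockwise cyclic order on $\partial D_\varepsilon$ matching the cyclic order of the strands at $p$. In the annulus $A = D \setminus D_\varepsilon$, the portion of $s_i$ from $q_i$ to its ingoing endpoint $a_i \in \partial D$ is a simple arc $\alpha_i$, and we want to compare the cyclic orders of $\{q_i\}$ and $\{a_i\}$.

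The key reduction is to arrange $\alpha_1, \alpha_2, \alpha_3$ to be pairwise disjoint inside $A$: granted this, the elementary topology of the annulus (three pairwise disjoint arcs joining its two boundary circles preserve cyclic order of their endpoints) immediately gives that the $a_i$ appear on $\partial D$ in the same cyclic order as the $q_i$ on $\partial D_\varepsilon$, hence in the cyclic order of the strands at $p$. To arrange this disjointness, I plan to induct on the number $N$ of triple crossings $q \neq p$ at which at least two of $s_1, s_2, s_3$ appear together; since distinct strands meet only at triple crossings, these are precisely the points where two of the $\alpha_i$ can intersect. The base case $N=0$ is then automatic. In the inductive step, I select an innermost such $q$ (one bounding, together with subarcs of the corresponding $\alpha_i, \alpha_j$, a subdisk of $A$ containing no further shared crossings) and apply a $2\leftrightarrow 2$ move (invoked via Theorem \ref{DT}) at $q$ with an adjacent triple crossing to push $q$ across that subdisk, eliminating the intersection. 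Since such a move is purely local and does not affect the boundary matching or the picture near $p$, it decreases $N$, and the induction continues until $N=0$.

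The main obstacle is the rigor of the inductive step: verifying that at every innermost shared crossing $q$ there is a suitable $2\leftrightarrow 2$ move available that genuinely decreases $N$. Handling the possibility that the third strand at $q$ itself belongs to $\{s_1, s_2, s_3\}$, or that no immediately adjacent triple crossing permits the desired move, requires careful case analysis. A further subtlety is that Theorem \ref{DT} is stated for minimal triple-crossing diagrams, so a preliminary reduction to the minimal case via elementary simplification moves may be needed for non-minimal diagrams; alternatively, one might bypass $2\leftrightarrow 2$ moves altogether by a direct topological argument that exploits the global alternation of in/out orientations along $\partial D_\varepsilon$ and all other small circles around triple crossings in the diagram.
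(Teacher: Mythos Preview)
Your annulus idea is sound but takes a different route from the paper, which argues by induction on the number of strands via Thurston's construction: one finds a strand $s$ such that on one side no other strand has both its endpoints, draws $s$ along the boundary on that side so that every triple crossing on $s$ is manifestly in the correct cyclic order, and then applies the inductive hypothesis to the region on the far side of $s$ (a triple-crossing diagram with one fewer strand whose ingoing boundary points retain their circular order). No annulus and no $2\leftrightarrow 2$ moves appear.

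Your $2\leftrightarrow 2$ reduction is both unjustified and, in the case that matters, unnecessary. It is unjustified because a $2\leftrightarrow 2$ move merely reshuffles which triples of strands meet at two adjacent crossings, and there is no reason your ``innermost'' $q$ is adjacent to a crossing with which it forms a $2\leftrightarrow 2$ pair whose execution actually decreases $N$; Theorem~\ref{DT}(2) only connects two given minimal diagrams and gives you no way to steer toward one with smaller $N$. It is unnecessary because in the only application (Theorem~\ref{monotone1}) the diagram arises from a minimal bipartite graph and hence has no parallel bigons of strands; but if $\alpha_i$ and $\alpha_j$ met at some $q\neq p$, the arcs of $s_i$ and $s_j$ from $q$ to $p$ would both be traversed in the strand direction, i.e.\ would bound a parallel bigon. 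Thus in that setting the $\alpha_i$ are already pairwise disjoint, $N=0$ from the outset, and your annulus argument concludes immediately without any moves.
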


\begin{proof} The proof is by induction on the number of strands.
Order the strands according to the counterclockwise circular order of their
ingoing endpoints. 
We recall the construction of a triple crossing diagram of \cite{Thurston}:
take a strand with the property
that on one side, no other strand has both its endpoints. One can draw this strand 
following the boundary as illustrated in Figure \ref{monotonefig}, crossing the intermediate
strands in pairs. Each of these crossings is positive. 
The remaining region again has the property that the strands
alternate orientation along the boundary and so 
can be completed to a triple crossing diagram with one fewer strands by induction. 
Moreover the ingoing strands of the new region retain their original circular order.
\end{proof}

\begin{figure}[ht]
\epsfxsize110pt
\centerline{\epsfbox{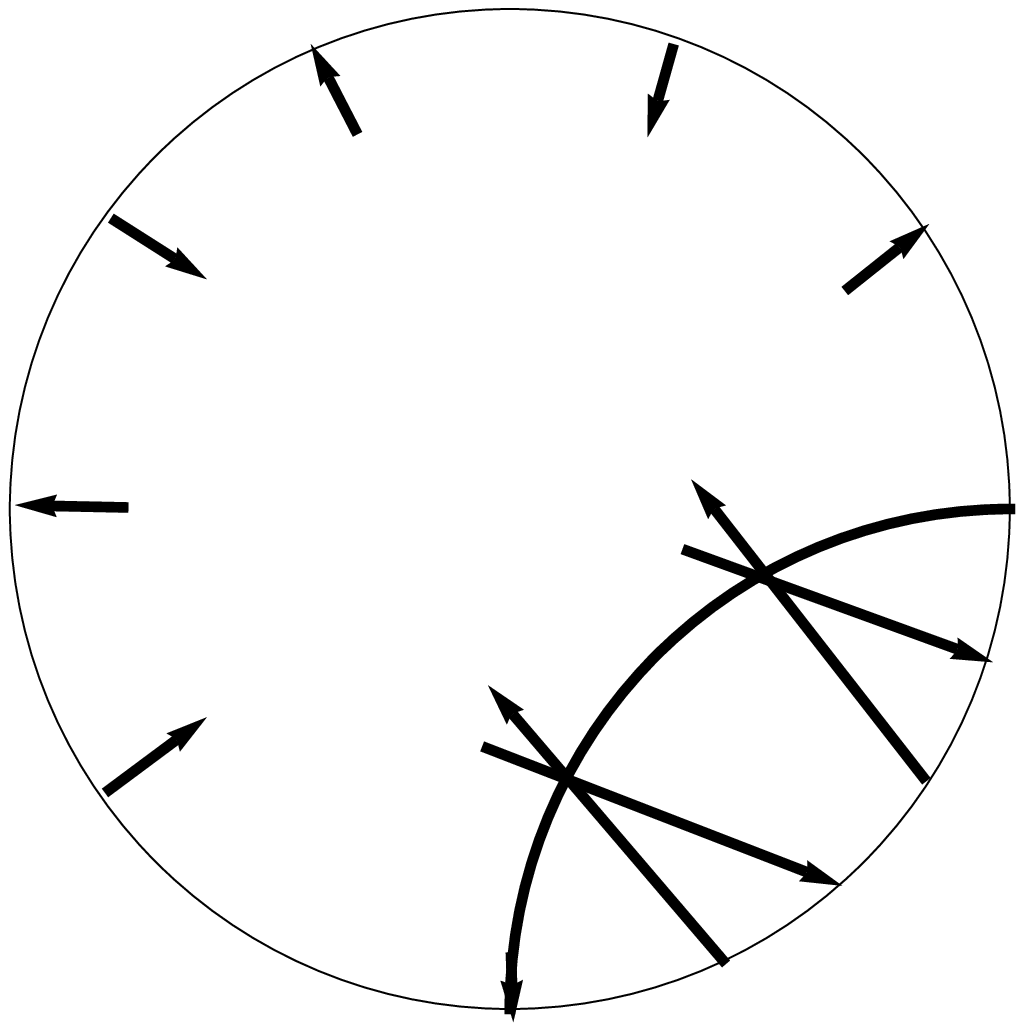}}
\caption{\label{monotonefig}}
\end{figure}

By taking limits in which all points $\alpha(e_i)$ collide (but preserve their circular order)
there are circular-order-preserving maps $\alpha$ for which
$\varphi_\alpha$ takes integer values $\varphi_{\alpha}\in\{0,1\}$.

Given a map $\alpha$ provided by Theorem \ref{4.17.10.1}, there is a $1$-chain 
$$
\Phi_\alpha=\sum_E \varphi_\alpha(E)[E]\quad \mbox{with} \quad 
d\Phi_\alpha=\sum_{v}{\rm sgn}(v)[v].
$$  
It is an integral $1$-chain if $\phi_{\alpha}$ is integer-valued.
Thus
$[M] - \Phi_\alpha$ is an integral cycle. It provides a monomial function on  
${\L}_\Gamma$
\be \la{pafa}
\omega_\alpha(M): {\L}_\Gamma \lra \C^*, 
\ee
given by the monodromy around the cycle $[M] - \Phi_\alpha$. 
It is the complex character of the group ${\L}_\Gamma$ 
corresponding to the homology class $[M] - \Phi_\alpha$ via the isomorphism (\ref{3.5.09.1}). 

\bd Let $\Gamma$ be a bipartite surface graph. Let
 $\alpha$ be a map satisfying (\ref{4.17.10.111}). 
The 
\emph{$\alpha$-deformed partition function ${\mathcal P}_{\alpha}$} 
of the dimer model on $\Gamma$ is a function 
on the space ${\L}_\Gamma$ given as the sum 
of monomials $\omega_\alpha(M)$ over all dimer covers
$M$ of $\Gamma$, with the same signs as in (\ref{sf}): 
\be \la{partsum}
{\mathcal P}_{\alpha}:= \sum_{M}{\rm sgn}(M) \omega_\alpha(M).
\ee
\ed

\subsubsection{The Hamiltonians} \la{Hamcomm}

Let $[M]_\alpha\in H_1(\Gamma, \Z)$ 
be the homology class of the cycle $[M]-\Phi_\alpha$. 
Recall the projection 
$\pi: H_1(\Gamma, \Z) \to H_1(S, \Z)$. 
Given a homology class $a\in H_1(S, \Z)$, 
take the part of the sum (\ref{partsum}) over the matchings $M$ 
such that the projection of  $[M]_\alpha$ to $H_1(S, \Z)$ is the class $a$:
$$
H_{\alpha; a} := \sum_{M: \pi([M]_\alpha) =a}{\rm sgn} (M)\omega_\alpha(M).
$$
By Theorem \ref{detK} the sign ${\rm sgn} (M)$ depends only on $a$, and the partition function  
is a sum
\be \la{decomph}
{\mathcal P}_{\alpha} = \sum_{a\in H_1(S, \Z)} {\rm sgn} (a)H_{\alpha; a}.
\ee

\bd 
The functions $H_{\alpha; a}$ in (\ref{decomph}) are the {\it Hamiltonians of the dimer system}. 
\ed

\begin{remark} A different map $\alpha'$ leads to another collection  
of Hamiltonians $\{H_{\alpha'; a}\}$ which differ from $\{H_{\alpha; a}\}$ 
by a common monomial factor, which lies in the center of the Poisson algebra ${\cal O}({\cal L}_\Gamma)$. 
Indeed, the monodromy along any zig-zag loop is a Casimir element. 
Therefore the  Hamiltonian flows 
do not depend on the choice of $\alpha$,  and on each symplectic leaf any two collections of Hamiltonians 
differ by a common factor. 
\end{remark} 

\subsubsection{A coordinate expression.} 
Let $z_1, ..., z_{2g}$ be loops on $\Gamma$ whose homology classes 
generate $H_1(S, \Z)$. Let $\sum n_i[z_i]$ be the homology class 
of the cycle  $[M] - \Phi_\alpha$. 
Then there is a function $\psi_M: \{\mbox{Faces of $\Gamma$}\} \to \R$, 
which we interpret as a $2$-chain, such that 
$$
d\psi_M = [M] - \Phi_\alpha - \sum n_i[z_i].
$$
So one has 
$$
{\mathcal P}_{\alpha}:= \sum_{M} \pm \prod_{F}W_F^{\psi_M(F)}\prod_iz_i^{n_i}.
$$
Each Hamiltonian is, up to a sign,  
a sum of monomials with coefficients $+1$ in variables
$W_F,z_i$ with a given value of 
$(n_1, ..., n_{2g})$. 
If $S$ is the torus, $P_\alpha$ is a polynomial
in $z_1,z_2$.

\subsection{The classical Hamiltonians commute} \la{chcom}

A minimal bipartite graph on a torus   
provides  an algebraic integrable system:

\bt \la{completeintegrabilitytheorem}
Let $\Gamma$ be a minimal bipartite graph on a torus $\T$. Then 

i) The 
Hamiltonians $H_{\alpha; a}$ commute under the Poisson bracket 
 on 
${\L}_\Gamma$. 

ii) The Hamiltonians are independent and their
 number is the half of the dimension of the generic symplectic leaf. 
\et

The part i) of Theorem \ref{completeintegrabilitytheorem} 
is proved in Section \ref{chcom}. 
The number of Hamiltonians is counted in Section \ref{numHam}, based on the results of Section \ref{ns}. 
Finally, 
the independence of 
Hamiltonians is proved in Section \ref{Concl}.

The quantum version of our integrable 
system is established in Section \ref{QIS}. The key point is that 
the classical Hamiltonians are Laurent polynomials in any coordinate system related to a minimal 
bipartite graph on the torus, 
and thus admit a natural upgrade to elements of the corresponding quantum tori.  
The claim that they commute follows then immediately 
from the combinatorial proof of commutativity of the classical Hamiltonians given below. We also need the independence of 
the classical Hamiltonians, proved in Section \ref{Concl}. 

Section \ref{resistornetsection} is totaly independent: it neither  
relies on nor used in the proof of integrability. Section \ref{HBDEsection} is not 
used in the proof of integrability. It discusses more specific 
discrete cluster integrable systems, assuming Theorem \ref{completeintegrabilitytheorem}.

\vskip 3mm
{\it Proof of part i)}
Take a pair of matchings $(M_1, M_2)$ on $\Gamma$. 
Let us assign to them  another pair of matchings 
$(\widetilde M_1, \widetilde M_2)$ on $\Gamma$. 
Observe that $[M_1]-[M_2]$ is a $1$-cycle. It is a disjoint union of 

\begin{enumerate}

\item homologically trivial loops,

\item  homologically non-trivial loops,  

\item edges shared by both matchings. 
\end{enumerate} 
For every edge $E$ of each homologically trivial loop 
we switch the label of $E$: if $E$ belongs 
to a matching $M_1$ (respectively $M_2$), we declare that it will belong to a matching 
$\widetilde M_2$ (respectively $\widetilde M_1$). 
For all other edges we keep their labels intact. 
%For the homologically non-trivial loops we do not change the labels: 
%if $E$ belongs to $M_i$, it will belong to $\widetilde M_i$. 
%The edges which belong to both $M_1$ and $M_2$ will belong to both $\widetilde M_1$ and $\wi%detilde M_2$. 
Denote by $\mu_i$ and $\widetilde \mu_i$ the homology classes of the cycles
$[M_i] - \Phi_\alpha$ and $[\widetilde M_i] - \Phi_\alpha$. 
Clearly $(\widetilde {\widetilde M_1}, 
\widetilde {\widetilde M_2}) = (M_1, M_2)$. 
We use a shorthand $\varepsilon(\ast, \ast)$ for $\varepsilon(\ast, \ast)_{\hat S_\Gamma}$.

\bl \la{4.16.10.1}
One has 
\be \la{4.16.10.3}
\varepsilon(\mu_1, \mu_2) + \varepsilon(\widetilde \mu_1, \widetilde \mu_2)=0.
\ee
\el

\begin{proof} 
We use a local calculation of the Poisson bracket on ${\mathcal L}_\Gamma$ 
given in Section \ref{sec2}. 
Let us write (\ref{4.16.10.3}) 
as a sum of the local contributions corresponding to the vertices $v$ of $\Gamma$, 
and break it in three pieces as follows: 
$$
\varepsilon(\mu_1, \mu_2) + \varepsilon(\widetilde \mu_1, \widetilde \mu_2) = 
(\sum_{v \in {\mathcal E}_1} + \sum_{v \in {\mathcal E}_2} + \sum_{v \in {\mathcal E}_3})
\Bigl(\delta_v(\mu_1, \mu_2) + \delta_v(\widetilde \mu_1, \widetilde \mu_2)\Bigr).
$$
Here 
${\mathcal E}_1$ (respectively ${\mathcal E}_2$ and ${\mathcal E}_3$) is the set of vertices which belongs to 
the homologically trivial loops (respectively homologically non-trivial loops, double edges).

Then for any $v \in {\mathcal E}_1$ we have 
$\delta_v(\mu_1, \mu_2) + \delta_v(\widetilde \mu_1, \widetilde \mu_2)=0$ since the elements 
$l_v(\mu_1)\in {\mathbb A}_v$ provided by $\mu_i$ by the very construction coincides with the element 
 $l_v(\widetilde \mu_2)$, and similarly $l_v(\mu_2) = l_v(\widetilde \mu_1)$. 
So the first sum is zero since it is a sum of zeros.

The third sum is also a sum of zeros. Indeed, the contribution of the matchings $M_1$ and $M_2$ 
at every vertex $v \in {\mathcal E}_3$ is the same, so the local pairing vanishes by the skew symmetry. 

It remains to prove that 
$$
\sum_{v \in {\mathcal E}_2} 
\Bigl(\delta_v(\mu_1, \mu_2) + \delta_v(\widetilde \mu_1, \widetilde \mu_2)\Bigr)=0.
$$

Let $\gamma$ be an oriented loop on $\Gamma$. We define the {\it bending} 
$b_v(\gamma; \varphi)$ of the function $\varphi$ at a vertex 
$v$ of $\gamma$ as 
$$
b_v(\gamma; \varphi)=\sum_{E \in R_v}\varphi(E) - \sum_{E \in L_v}\varphi(E)\in \R. 
$$
Here $R_v$ (respectively $L_v$) 
is the set of all edges sharing the vertex $v$ which are on the right
(respectively left) of the path $\gamma$, 
following the direction of the path $\gamma$. Lemma \ref{4.16.10.1} now follows
from Lemma \ref{Euclidean} below. 
\end{proof}

Recall the set $X$ of circular-order-preserving maps from the zig-zag paths to $\R/\Z$.
 
\bl\la{Euclidean} There is an  $\alpha\in X$ 
such that the corresponding function $\varphi_\alpha$ 
satisfies, for any simple topologically nontrivial loop $\gamma$,
$$
\sum_{v \in \gamma}b_v(\gamma; \varphi_\alpha)=0. 
$$
\el
 
\begin{proof}
On a Euclidean torus, a simple closed polygonal curve which is not null-homotopic
has the property that its net curvature is zero,
where the net curvature is the total amount of left turning minus right turning.
This fact can be seen by taking an isotopy of the curve to a straight geodesic
through a family of simple polygonal curves; the total curvature is an
integer multiple of
$2\pi$ but changes continuously under the isotopy, and for a geodesic it is zero.

A map $\alpha\in X$ allows us to draw our graph $\Gamma$ on a Euclidean torus
in such a way that $b_v(\gamma,\varphi_\alpha)$ is (with a small proviso) the net curvature
of $\gamma$ at vertex $v$.
This is accomplished as follows. To each edge $E$, recall that
$\alpha_r,\alpha_l$ are the angles of the strands crossing $E$. We define
a $1$-cochain $f$ by $f(E) = e^{i\alpha_l}+e^{i\alpha_r}$.
We have $df=0$, since strands cross two edges of every face,
once with each orientation.
Thus there is a multivalued function $F$ on vertices of $\Gamma$,
which is locally well defined but has additive periods $\omega_1,\omega_2$
for the generators of $H_1(S)$.
Extending $F$ linearly over the edges gives a map from
$\Gamma$ to a Euclidean torus
$\C/\langle\omega_1,\omega_2
\rangle$.
Locally, to each edge we are associating a Euclidean rhombus
with vertices $z,z+e^{i\alpha_l},z+e^{i\alpha_l}+e^{i\alpha_r}, z+e^{i\alpha_r}$,
where $z$ is determined by $F$; the edge $E$ runs from $z$
to $z+e^{i\alpha_l}+e^{i\alpha_r}$, and the vertices
$z+e^{i\alpha_l},z+e^{i\alpha_r}$ are the centers of the faces adjacent to $E$.
 These rhombi are glued edge to edge to form a  Euclidean torus (possibly with conical singularities).

If for each $E$ the angle $\varphi(E)=\alpha_r-\alpha_l$ is less than $\pi$, then the
rhombi are
positively oriented and, by Lemma \ref{monotone1},
the total angle at each vertex is $2\pi$ and our Euclidean surface is smooth, that is, has no singularities.
If a rhombus for edge $E$, connecting vertices $v,v'$,
has angle larger than $\pi$, by Lemma \ref{monotone1} it will be the only rhombus
at $v$ or $v'$ with this property. Hence the singularities
in our Euclidean structure are folds, as shown in Figure \ref{fold}, and
are separated from each other.

It remains to check that if $\gamma$ contains a vertex $v$ which has a singularity
then it is still true that $b_v(\gamma,\varphi)$ measures the net curvature
of $\gamma$ at $v$. If $\gamma$ passes through $v$ but not along edge $E$
this is trivially true. In the remaining case, $\gamma$ contains edge $E=vv'$:
suppose $\gamma$ contains
consecutive vertices $v_1,v,v',v_2$ in order. Then it is straightforward to check that
the sum of the
curvature contributions at $v$ and $v'$ equals the change in Euclidean angle
of the edge $v_1v$ to the edge $v'v_2$.
In all cases therefore $b_v$ is the net curvature of $\gamma$, which is zero.
\end{proof}

\begin{figure}[ht]
\epsfxsize80pt
\centerline{\epsfbox{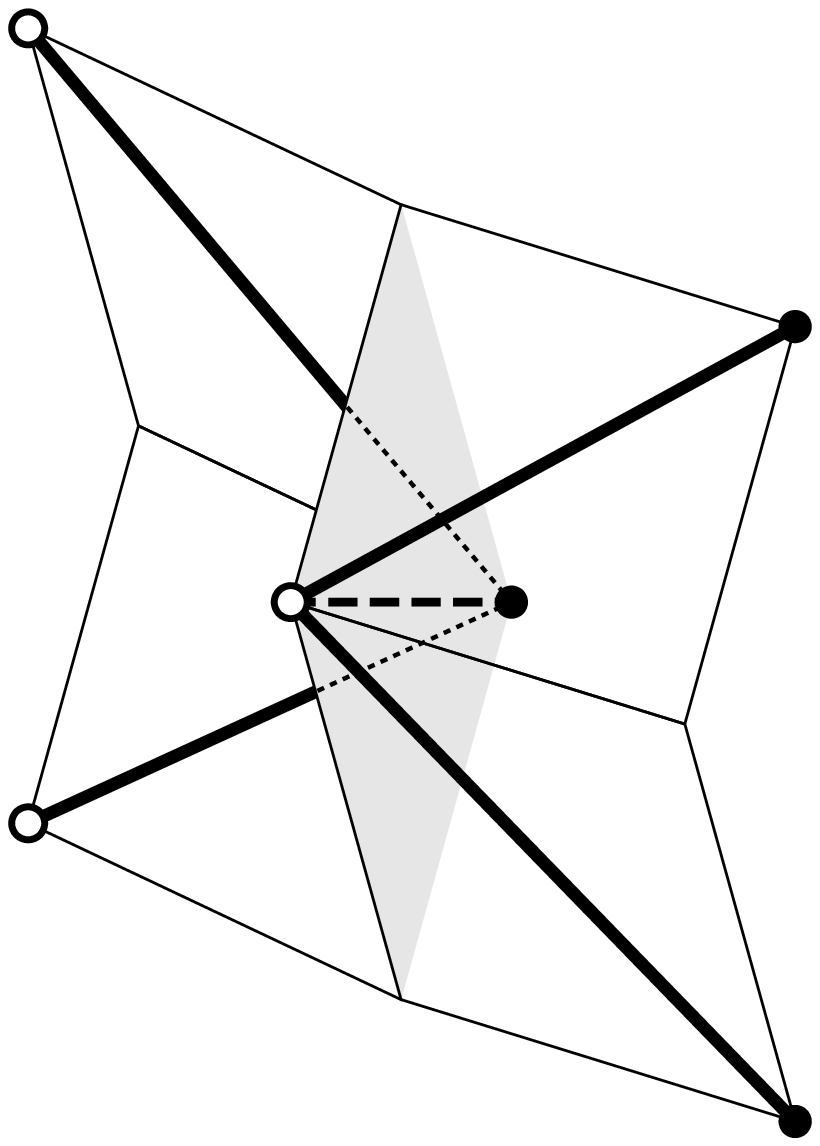}}
\caption{\label{fold}}
\end{figure}

The part i) of Theorem \ref{completeintegrabilitytheorem} follows immediately from Lemma \ref{4.16.10.1}. 
Indeed, the sum calculating the Poisson bracket of two Hamiltonians 
can be presented as a sum of contributions (\ref{4.16.10.3}). 
The part i) of Theorem \ref{completeintegrabilitytheorem} is proved.
\hfill{$\Box$}

\subsection{The matching polytope}

Let $\Omega$ be the {\bf matching polytope} of $\Gamma$. By definition
this is the subset $$\Omega\subset[0,1]^{\{\text{edges of  $\Gamma$}\}}$$
consisting of functions $\omega$ on edges of $\Gamma$ with values in $[0,1]$
such that the sum of the values at every vertex is $1$.  The subset $\Omega$ is a convex
polyhedron.

\begin{lemma}[\cite{LP}] \la{4.18.10.11} The vertices of $\Omega$ are the perfect
matchings of $\Gamma$.
\end{lemma}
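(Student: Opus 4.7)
The plan is to prove the two inclusions separately: every perfect matching is a vertex of $\Omega$, and every vertex of $\Omega$ is the indicator function of a perfect matching. The core tool will be a convexity argument exploiting the bipartite structure of $\Gamma$.

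First, for a perfect matching $M$, the indicator function $\omega_M$ (with $\omega_M(E)=1$ if $E\in M$ and $0$ otherwise) clearly lies in $\Omega$. Since its coordinates are extreme in $[0,1]$, it cannot be written as a nontrivial convex combination of two other points of $\Omega$: any such decomposition $\omega_M=\tfrac12(\omega^++\omega^-)$ with $\omega^{\pm}\in[0,1]^{\text{edges}}$ forces $\omega^{\pm}=\omega_M$. Hence $\omega_M$ is a vertex.

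Next, I would show that any vertex $\omega$ of $\Omega$ must be $\{0,1\}$-valued. Let $H\subset\Gamma$ be the subgraph of edges with $0<\omega(E)<1$. At each vertex $v$ incident to $H$, the sum $\sum_{E\ni v}\omega(E)=1$ decomposes into an integer coming from edges with $\omega(E)\in\{0,1\}$ plus the strictly fractional contributions; the latter must therefore sum to a positive integer, which forces $v$ to be incident to at least two edges of $H$. Thus every vertex of $H$ has degree $\ge 2$, so $H$ contains a cycle $C=E_1E_2\cdots E_{2k}$, which is of even length because $\Gamma$ is bipartite.

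Choosing $\epsilon>0$ small enough that $\omega(E_i)\pm\epsilon\in(0,1)$ for all $i$, I would define
\[
\omega^{\pm}(E)=\begin{cases}\omega(E)\pm(-1)^i\epsilon & E=E_i,\\ \omega(E)&\text{otherwise.}\end{cases}
\]
Since the alternating signs along the even cycle cancel at each vertex of $C$, both $\omega^{\pm}$ still satisfy the vertex-sum condition and thus lie in $\Omega$. But $\omega=\tfrac12(\omega^++\omega^-)$ with $\omega^+\neq\omega^-$, contradicting the extremality of $\omega$. Hence $\omega$ is $\{0,1\}$-valued, and the vertex constraint then says the support of $\omega$ is a perfect matching. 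The main subtlety to watch is simply the bipartite even-cycle step, which is what makes the alternating perturbation preserve the vertex sums; everything else is routine linear algebra on the polytope $\Omega$.
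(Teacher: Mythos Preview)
Your proof is correct and follows essentially the same approach as the paper's: both arguments locate an even cycle among the edges with fractional weight (you via the minimum-degree $\ge 2$ observation, the paper via an explicit walk) and then perturb alternately along it to contradict extremality, using bipartiteness to ensure the alternation is consistent. Your write-up is in fact slightly more complete, since you also spell out the easy direction that perfect matchings are vertices.
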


\begin{proof} To make the paper self-contained, we give a proof. 
 Let $\omega$ be a vertex of $\Omega$. Let us show that 
for every vertex $v$ of $\Gamma$ there is just one edge $E$ incident to $v$ 
with $\omega(E) \not = 0$. Indeed, otherwise there is a vertex of $v_1$ incident to edges $E_1, E_2$
with the weights in $(0,1)$. Let $v_2$ be another vertex of $E_2$. 
It is incident to another edge $E_3$ with the weight in $(0,1)$ -- otherwise the sum of the weights at 
$v_2$ will be less then $1$. Let $v_3$ be another edge of $E_3$, and so on.
We get  a closed path on $\Gamma$ whose edges have the weights in $(0,1)$. Let us add 
$\varepsilon$ to the weight of each of these edges, where $|\varepsilon|$ is very small, and the signs alternate. The  sign condition
does not lead to a problem since $\Gamma$ is a bipartite graph.
Then we still get an element $\omega_\varepsilon \in
\Omega$. So $\omega$ is inside of a 
segment in $\Omega$.  
Thus it is not a vertex 
of $\Omega$.

It follows that the values of $\omega$ at the edges are either $1$ or $0$. 
The edges where the value is $1$ give a matching. 
\end{proof}

Elements of $\Omega$ are called {\bf fractional matchings}.

\begin{lemma} \la{4.18.10.11001} Let $\Gamma$ be a minimal bipartite graph on a torus.
Then $\Gamma$ has a perfect matching.
\end{lemma}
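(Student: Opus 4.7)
The plan is to apply Lemma \ref{4.18.10.11}: since the matching polytope $\Omega$ is compact and convex, once it is shown to be non-empty it has at least one vertex, and by Lemma \ref{4.18.10.11} every vertex of $\Omega$ is a perfect matching. So the task reduces to exhibiting a single fractional matching of $\Gamma$.

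To produce one, I would use Theorem \ref{4.17.10.1}. By Theorem \ref{3.21.10.1}, the minimal bipartite torus graph $\Gamma$ is associated to some Newton polygon $N$ and arises from a minimal triple crossing diagram, so the set $X$ of circular-order-preserving maps from the primitive boundary edges of $N$ (equivalently, from the zig-zag paths of $\Gamma$) to $\R/\Z$ is non-empty. For any $\alpha\in X$, each value $\varphi_\alpha(E)=\alpha_r-\alpha_l$ is by definition the length of a counterclockwise arc on $\R/\Z$, and hence lies in $[0,1]$. The content of Theorem \ref{4.17.10.1} at a vertex $v$ is that the strand labels assigned by $\alpha$ to the two strands crossing each incident edge, read consecutively as one goes around $v$, wrap once around $S^1$ in the cyclic order of the edges. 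Summing the arc-length contributions from the incident edges then yields $\sum_{E\ni v}\varphi_\alpha(E)=1$ at every $v$. Thus $\varphi_\alpha\in\Omega$.

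There is essentially no substantive obstacle beyond unwinding the local strand combinatorics at a vertex, which is already present in the proof of Theorem \ref{4.17.10.1}; bipartiteness ensures that the signed identity $d\varphi_\alpha=\sum_v\mathrm{sgn}(v)[v]$ reduces to the unsigned vertex-sum condition defining $\Omega$. A more hands-on alternative, avoiding Lemma \ref{4.18.10.11} altogether, is to invoke the remark after Theorem \ref{4.17.10.1}: by taking $\alpha$ in a degenerate limit in which the points $\alpha(e_i)$ collide while preserving circular order, one can arrange $\varphi_\alpha$ to be $\{0,1\}$-valued, and then the very same local computation shows that the edges on which $\varphi_\alpha=1$ form a perfect matching of $\Gamma$ directly, with no appeal to convex geometry needed.
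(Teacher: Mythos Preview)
Your proposal is correct and follows essentially the same approach as the paper: reduce to non-emptiness of $\Omega$ via Lemma~\ref{4.18.10.11}, and then exhibit $\varphi_\alpha$ from Theorem~\ref{4.17.10.1} as a fractional matching. Your added explanation that the arc-length values lie in $[0,1]$ and that the cyclic ordering forces $\sum_{E\ni v}\varphi_\alpha(E)=1$, together with the alternative via a degenerate $\alpha$, simply fleshes out what the paper states in one line.
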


\begin{proof} It suffices to show that $\Omega$ is nonempty.
Indeed,  by Lemma \ref{4.18.10.11} any vertex of the polyhedron $\Omega$ provides
a perfect matching. Any minimal bipartite graph $\Gamma$ is obtained from
some polygon $N$. The function $\varphi_\alpha$ from Theorem \ref{monotone1}
is then a fractional matching by (\ref{4.17.10.111}). 
\end{proof}

Let $\omega_0\in\Omega$ be a fixed fractional matching.
Then for any $\omega\in\Omega$ the difference
$[\omega]-[\omega_0]$ is a $1$-cycle on $\Gamma$. Taking its homology
class we get a map $\Omega\to H_1(\Gamma,\R)$. Its image is a convex polyhedron
$[\Omega]\subset H_1(\Gamma,\R)$. Its projection under the
canonical map $\pi:H_1(\Gamma,\R)\to H_1(\T,\R)$ is a convex polygon
$$\pi[\Omega]\subset H_1(\T,\R)\cong \R^2,$$
which depends on $\omega_0$ only through a translation.

Recall the space $X$ of order-preserving maps from $\{e_1,\dots,e_n\}$ to
$\R/\Z$.
The image of $X$ under the map $\alpha\to \varphi(\alpha)$ is a subpolyhedron $\Omega_0
\subset\Omega$. It is in fact a simplex: the simplex of order-preserving maps from
the strands to $\R/\Z$, modulo rotation.

Let $\alpha_i\in X$ be the map which sends all $E_i$ to $0$ and
assigns $1$ to the arc between $E_i$ and $E_{i+1}$
and $0$ to all other arcs. The matching $\varphi(\alpha_i)$
maps to a vertex of $\Omega_0,$ and every vertex of $\Omega_0$ is of this type.

\subsection{The Newton polygon}

\begin{theorem} \la{3.21.10.2} Let $\Gamma$ be a minimal bipartite graph with
polygon $N$. Then the Newton polygon of the partition function $P_\alpha(z_1,z_2)$
for $\Gamma$ is $N$ (up to translation).
\end{theorem}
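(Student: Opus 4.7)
My plan is to identify the Newton polygon of $P_\alpha(z_1,z_2)$ with the image $\pi[\Omega]$ of the matching polytope under $\pi:H_1(\Gamma,\R)\to H_1(\T,\R)$, and then show that this image is a translate of $N$.

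First I would make the reduction: by (\ref{decomph}) the Newton polygon of $P_\alpha$ is the convex hull of the classes $\pi([M]_\alpha)$ over perfect matchings $M$. Since by Lemma \ref{4.18.10.11} perfect matchings are precisely the vertices of $\Omega$, and the affine map $\omega\mapsto[\omega]-\Phi_\alpha$ then descends to the linear projection $\pi$, the Newton polygon coincides with $\pi[\Omega]-\pi[\Phi_\alpha]$. Thus the theorem is equivalent to the statement that $\pi[\Omega]$ equals $N$ up to translation, and the translation ambiguity corresponds exactly to the choice of base fractional matching $\omega_0$.

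Next I would prove the lower bound $N\subseteq \pi[\Omega]$ by analyzing the sub-simplex $\Omega_0\subset\Omega$, whose vertices are the matchings $M_i=\varphi_{\alpha_i}$. By the defining formula $\varphi_\alpha(E)=\alpha_r-\alpha_l$ interpreted as the counterclockwise arc, an edge $E$ with zig-zag labels $(z^l_E,z^r_E)=(e_a,e_b)$ lies in $M_i$ iff the index $i$ lies on the counterclockwise arc from $a$ to $b$. Consequently $M_i$ and $M_{i+1}$ differ exactly on the edges whose two zig-zag labels include $e_{i+1}$, i.e.\ on the edges of the zig-zag loop $Z_{i+1}$. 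The alternation of colors along a zig-zag loop forces $[M_i]-[M_{i+1}]$ to equal $\pm Z_{i+1}$ as an integral $1$-cycle, and by Lemma \ref{4.18..10.5} its projection to $H_1(\T,\Z)$ is $\pm[e_{i+1}]$. Cycling $i$ from $1$ to $n$ produces a closed path in $H_1(\T,\Z)$ whose consecutive edge vectors are the primitive boundary vectors of $\partial N$ in cyclic order, so the $\pi([M_i]_\alpha)$ are exactly the integer boundary points of a translate of $N$. Convexity of $\pi[\Omega_0]$ then yields the translated copy of $N$.

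Finally, for the upper bound $\pi[\Omega]\subseteq N$, I would argue via minimality. Any vertex of $\pi[\Omega]$ is the image of some perfect matching $M$, so it suffices to show $\pi([M]_\alpha)\in N$. Comparing $M$ to a suitable extremal $M_i$, the symmetric difference $[M]-[M_i]$ is a disjoint union of oriented simple cycles on $\Gamma$; homologically trivial cycles do not contribute to $\pi$, while each topologically non-trivial cycle $\gamma$ has its $\pi$-contribution controlled by the directions of the strands it meets. Minimality---the absence of self-intersections of strands and of parallel bigons in the universal cover---prevents $\gamma$ from winding in a direction inconsistent with the boundary structure of $N$, and bounds the net winding by the perimeter contribution of $\partial N$. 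Combining this bound across all non-trivial cycles produced in the decomposition keeps $\pi([M]_\alpha)$ inside $N$.

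The main obstacle is the upper bound: minimality is genuinely essential, since for non-minimal graphs the Newton polygon can exceed $N$, and the combinatorial no-parallel-bigon condition must be converted into a global bound on the winding of $M\triangle M_i$ on the torus. The lower bound, by contrast, is a direct computation based on the zig-zag structure already established in Sections \ref{sec3} and \ref{pttnfnsection}.
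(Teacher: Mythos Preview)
Your reduction to $\pi[\Omega]$ and your lower bound via the sub-simplex $\Omega_0$ are essentially the paper's argument: the paper also shows $\gamma_{i+1}=\gamma_i+[Z_i]$ (your $[M_i]-[M_{i+1}]=\pm Z_{i+1}$) and concludes that the $\pi([M_i])$ are the integer boundary points of a translate of $N$.

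The gap is in your upper bound. Your plan is to decompose $[M]-[M_i]$ into simple cycles and invoke minimality to bound their ``winding'', but you never say what inequality this actually produces or how the no-parallel-bigon condition yields it; as written this step is a sketch, not an argument. The paper bypasses this entirely with a one-line linear bound: define
\[
N_\Gamma=\Bigl\{\gamma\in H_1(\T,\R):\ \langle\gamma,[Z]\rangle\le \tfrac{l(Z)}{2}-\langle[\omega_0],[Z]\rangle\ \text{for every zig-zag }Z\Bigr\},
\]
where $\langle C,[Z]\rangle$ is the signed sum $\sum_{E\in Z}\pm C_E$ (the intersection number of the chain $C$ with the strand). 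For any $\omega\in\Omega$ one has $\sum_{E\in Z}\omega(E)\le l(Z)/2$ simply because each white vertex on $Z$ carries total weight at most $1$; this immediately gives $\pi[\Omega]\subset N_\Gamma$. Then the relations $\gamma_{i+1}-\gamma_i=[Z_i]=e_i$ together with the two equalities at $Z_i,Z_{i+1}$ show that the $\gamma_i$ are exactly the corners of $N_\Gamma$, so $N_\Gamma$ is a translate of $N$ and equals $\pi[\Omega]$.

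Note in particular that the containment $\pi[\Omega]\subset N_\Gamma$ uses nothing about minimality; minimality enters only through the identification of the zig-zag homology classes with the primitive edges $e_i$ of $N$ (Lemma~\ref{4.18..10.5}), which is what makes $N_\Gamma$ a translate of $N$. So your diagnosis that ``minimality is genuinely essential for the upper bound'' is aimed at the wrong step, and chasing it via symmetric differences is harder than necessary.
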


\begin{proof}
Set $\omega_0=\Phi_\alpha$.
The Newton polygon of $P_\alpha(z_1,z_2)$ is a translate of $\pi[\Omega]$.
Indeed, by definition the coefficient in $P$ of $z_1^{i_1}z_2^{i_2}$ is the
sum of weights of dimer covers $M$ of $\Gamma$ for which the homology
class $[M]-\Phi_\alpha$ satisfies
$[M]-\Phi_\alpha=i_1[z_1]+i_2[z_2]$.

We can define the intersection pairing of a chain with a zig-zag path $Z$ as follows. 
If $C=\sum_EC_E[E]$ is a chain, the intersection
pairing of $C$ with $Z$ is $\langle C,[Z]\rangle = \sum_{E\in Z}\pm C_E$
where the sign is alternating, and depends to whether the orientation of the edge of $Z$ 
agrees with the white-to-black orientation or disagrees with it. In other words,
this is the intersection pairing of $C$ with the strand corresponding to $Z$.

We define another polygon $N_\Gamma\subset H_1(\T,\R)$ by a set of inequalities
\begin{equation}\label{inequalities}
N_\Gamma := 
\left\{\gamma\in H_1(\T,\R)~|~
\langle \gamma,[Z]\rangle\le \frac{l(Z)}2-\langle[\omega_0],[Z]\rangle
\mbox{ \rm for every zig-zag path } Z\right\}.
\end{equation}
We claim that $N$, $N_\Gamma$ and $\pi[\Omega]$ are translates of each other.

We first show that $\pi[\Omega]\subset N_\Gamma$. Let $\omega\in\Omega$.
The sum of $\omega(E)$ over the edges $E$ in a zig-zag path $Z$ is at most equal
to $l(Z)/2$, since this is the number of white vertices adjacent to $Z$
(each white vertex adjacent to $Z$ has total flow at most $1$ across $Z$).
Thus
$$
\langle [\omega-\omega_0],[Z]\rangle =\langle [\omega],[Z]\rangle-\langle
[\omega_0],[Z]\rangle
\le \frac{l(Z)}2-\langle[\omega_0],[Z]\rangle
$$
which implies that $\pi[\Omega]\subset N_\Gamma$.

We now show that the edges of $N$ and $N_\Gamma$ coincide.
Take $\gamma_i=[\omega(\alpha_i)-\omega_0].$ This satisfies
the inequalities (\ref{inequalities}) and two of these are
equalities: those for $Z_i$ and $Z_{i+1}$, because in $\alpha_i$ every white vertex
adjacent to $Z_i$ is matched to a black vertex on the other side
of $Z_{i}$, and similarly for $Z_{i+1}$.
Note moreover that  $\gamma_{i+1} = \gamma_i+[Z_i]$
so we see that $[Z_i]=e_i$ is an edge of
$N_\Gamma$. Thus $N_\Gamma$ has sides consisting of the $e_i$ in order,
which proves that $N$ and $N_\Gamma$ are translates.
Note moreover that each corner of $N_\Gamma$ is in the image of $\pi[\Omega]$
(in fact in the image of $\pi[\Omega_0]$). Thus $\pi[\Omega]=N_\Gamma$.
\end{proof}

\subsection{Counting triple crossings and faces }\la{ns}

From Thurston \cite{Thurston},
the number of triple crossings in any triple crossing diagram in a disk
is a function solely of the strand order along the boundary,
and can be obtained as follows. Take a linear order of the strand endpoints
which is consistent with the circular order. Then we say two strands $ac$ and $bd$,
directed from $a$ to $c$ and $b$ to $d$ respectively, have a 
{\it parallel crossing} if their endpoints are in the order $a<b<c<d$ or $d<c<b<a$.
\begin{lemma}[\cite{Thurston}]
In a minimal triple-crossing diagram in a disk, 
the number of triple crossings is the number of pairs of strands with a parallel crossing.
\end{lemma}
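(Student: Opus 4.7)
I would prove the lemma by induction on the number $n$ of strands, mirroring the inductive construction from the proof of Lemma \ref{monotone}. The base cases $n \le 2$ are vacuous: there are no triple crossings, and a direct inspection shows that no pair of strands with alternating boundary orientations can satisfy the parallel crossing condition $a<b<c<d$, so both $T$ and $P$ vanish.

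For the inductive step, let $D$ be a minimal triple-crossing diagram on $n$ strands, with $T$ triple crossings and $P$ parallel pairs. As in the proof of Lemma \ref{monotone}, I would pick a strand $s$ with endpoints $u,v$ such that one of the two arcs of the disk boundary cut off by $u,v$ contains no strand with both endpoints (the \emph{empty side}). Applying $2\leftrightarrow 2$ moves, I would redraw $D$ so that $s$ follows the boundary along the empty side as in Figure \ref{monotonefig}. These moves preserve $T$ and also preserve $P$, since $P$ depends only on the boundary matching. In this normal form, $s$ participates in $m_s$ triple crossings, each formed by $s$ together with a consecutive pair of crossing strands; letting $k$ denote the number of crossing strands (equivalently the number of endpoints on the empty side), Thurston's construction yields $k = 2m_s$.

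The key computation is that exactly $m_s$ crossing strands form parallel pairs with $s$. Choose the linear order beginning at $u$, so that the $k$ empty-side endpoints occupy positions $2,\ldots,k+1$ and $v$ occupies position $k+2$. Since orientations alternate around the boundary and $u$ is outgoing while $v$ is incoming, the orientations on the empty side alternate ``in, out, in, out, $\ldots$, out'', giving exactly $k/2$ outgoing endpoints. For a crossing strand $s'$ with empty-side endpoint at position $b$ and full-side endpoint at position $d$, the inequality $1 < b < k+2 < d$ holds automatically, while the reverse $d < k+2 < b < 1$ is impossible. Thus the parallel condition between $s$ and $s'$ reduces to requiring that $b$ be the start of $s'$, i.e., that its empty-side endpoint be outgoing — and there are exactly $k/2 = m_s$ such strands.

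Erasing $s$ yields a triple-crossing diagram $D'$ on $n-1$ strands in the region cut off by $s$. Any reduction of $D'$ would descend to a reduction of $D$ (leaving $s$ and its $m_s$ triple crossings intact), so $D'$ is itself minimal. By the inductive hypothesis, the number of triple crossings of $D'$ equals the number of parallel pairs among the remaining $n-1$ strands, which is $P - m_s$, since deleting the two positions $u,v$ from the linear order preserves the relative order of the other endpoints. Combining, $T = m_s + (P - m_s) = P$. The main obstacle is verifying the normal-form construction of Thurston in sufficient detail — namely, that the extremal strand $s$ can be redrawn along the boundary so that all of its triple crossings are formed by consecutive pairs of crossing strands, and that the residual diagram inherits minimality; once this geometric setup is in place, the orientation-alternation count giving $p_s = m_s$ is the combinatorial core of the argument.
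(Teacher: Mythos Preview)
The paper does not prove this lemma; it is simply cited from \cite{Thurston}. Your inductive strategy---strip off an extremal boundary-following strand $s$, count its contribution, and recurse on the sub-disk $D'$---is exactly the approach suggested by the paper's own proof of Lemma~\ref{monotone}, and is presumably Thurston's argument as well. The orientation-alternation count showing that $s$ contributes $m_s=k/2$ parallel pairs is correct in substance, though your use of ``outgoing'' for what is the \emph{start} of a strand is the reverse of the paper's in/out convention; the count $k/2$ comes out the same under either convention, so this is only a wording issue.

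There is, however, a genuine gap in the recursive step. You justify equating the parallel pairs among the remaining $n-1$ strands with $P(D')$ by saying that ``deleting the two positions $u,v$ from the linear order preserves the relative order of the other endpoints.'' But $\partial D'$ is \emph{not} $\partial D\setminus\{u,v\}$: on the empty side the $k$ endpoints are replaced by the $k$ points where the crossing strands meet $s$, and at each of the $m_s$ triple crossings on $s$ the two participating strands \emph{cross each other}, so their order along $\partial D'$ is the \emph{reverse} of their order along the empty side of $\partial D$. (Indeed, $\partial D\setminus\{u,v\}$ does not even carry alternating in/out orientations, so it cannot be the boundary of any triple-crossing diagram.) The fix is short: at each such pair the empty-side endpoints are at consecutive positions with opposite in/out type, so one of the two strands runs ``forward'' in $L$ and the other ``backward''; hence they can never form a parallel pair in either ordering, and an adjacent transposition of their endpoints leaves every other pair's relative order untouched. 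With this observation your identification $P(D')=P-m_s$ is justified and the induction closes.
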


Now take a triple crossing diagram on the unit square $[0,1]^2$ obtained
from a convex integral polygon $N$ as in Section \ref{sec3}.
Let $e_1,\dots,e_n\in\Z^2$ be the primitive sides of $N$.

\begin{lemma} \la{ntc}
The number of triple crossings is twice the area of $N$.
\end{lemma}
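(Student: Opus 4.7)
My plan is to invoke the previous lemma (Thurston's characterization), which reduces the problem to counting pairs of strands whose boundary endpoints occur in parallel cyclic order on $\partial R$.

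I would first describe the strands explicitly. For each primitive boundary edge $e_i = (a_i, b_i)$ of $N$, the loop $\alpha_i$ on the torus (in homology class $e_i$) crosses $\partial R$ transversely at $|a_i|+|b_i|$ points and so contributes $|a_i| + |b_i|$ strands to the diagram, with boundary endpoints sitting on the four sides of $R$ in positions dictated by the signs of $a_i,b_i$ and the CCW ordering of primitives around $\partial N$ (via Theorem \ref{3.21.10.1}). Strand pairs within the same loop contribute no crossings (by minimality and the fact that parallel-class loops can be isotoped to be disjoint), so I would decompose the parallel pair count as $T = \sum_{i \neq j} P_{ij}$, where $P_{ij}$ counts parallel-crossing strand pairs between $\alpha_i$ and $\alpha_j$.

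The final step is to show $\sum_{i \neq j} P_{ij} = 2\,\mathrm{Area}(N)$. For this I would combine two ingredients: (i) the shoelace identity
\[
2\,\mathrm{Area}(N) = \sum_{i < j} e_i \times e_j \qquad (\text{indices in CCW order, signed cross product}),
\]
which follows from writing vertices of $N$ as partial sums of the $e_i$; and (ii) a case analysis of the boundary configuration showing how each pair of loops contributes parallel versus antiparallel crossings. Pairs of loops $(\alpha_i,\alpha_j)$ with non-parallel classes contribute exactly $|e_i \times e_j|$ strand-pair crossings on the torus (the geometric intersection number), and the boundary combinatorics determines for each such crossing whether it registers as parallel or antiparallel.

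The principal obstacle is precisely this accounting: $P_{ij}$ is non-negative while $e_i \times e_j$ may have either sign, so matching the total parallel count to the signed shoelace sum requires exhibiting how antiparallel crossings cancel against negative contributions $e_i \times e_j < 0$, leaving exactly $2\,\mathrm{Area}(N)$ parallel pairs after this cancellation. Carrying out this case analysis based on the quadrants occupied by $e_i,e_j$ and the CCW position of their strand endpoints on $\partial R$ is the main technical work; once done, summing over all $i \neq j$ yields $T = 2\,\mathrm{Area}(N)$.
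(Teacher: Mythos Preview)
Your framework is correct --- invoke Thurston's parallel-crossing count, reduce to pairs of loops in distinct homology classes, and aim for an area formula built from the cross products $e_i\wedge e_j$. But you explicitly leave the central step undone: determining, for each pair $(\alpha_i,\alpha_j)$, how many of their $|e_i\wedge e_j|$ crossings are parallel versus antiparallel. Your proposed ``quadrant case analysis'' is not carried out, and your framing of the endgame as ``antiparallel crossings cancel against negative shoelace terms'' is muddled: $P_{ij}\ge 0$ cannot literally cancel anything, and the signed shoelace sum $\sum_{i<j}e_i\times e_j$ is not what you should be matching against directly.

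The paper's proof replaces your case analysis with one clean geometric idea. Start from a \emph{specific} configuration: all geodesics $e_i$ drawn through a common point $p=\varepsilon\tau$ with $\tau$ a generic positive direction, $\varepsilon$ small. Then (i) at the common crossing $p$, whether $e_i,e_j$ cross in parallel is decided by whether both lie on the same arc of $\partial N$ between the $\tau$-extremal vertices $v_{\min},v_{\max}$ (call these arcs $N_R,N_L$); and (ii) for $p$ close enough to the corner, \emph{every other} crossing of $e_i,e_j$ inherits the same type as the one at $p$. So the parallel count is simply
\[
\sum_{\substack{i<j\\e_i,e_j\in N_R}}|e_i\wedge e_j|\;+\;\sum_{\substack{i<j\\e_i,e_j\in N_L}}|e_i\wedge e_j|,
\]
and within each arc all cross products have the same sign, so the absolute values drop and each sum is twice the area of the corresponding half of $N$ (triangulate from $v_{\min}$). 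Finally one checks that isotoping the geodesics into triple-crossing position without crossing the corner preserves the parallel count. This avoids any quadrant bookkeeping and never touches the mixed-sign shoelace sum.
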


\begin{proof}
A minimal triple crossing diagram
can be obtained by starting with a configuration of straight geodesics
on the torus $[0,1]^2$, one for each $e_i$, and then isotoping them into position.

We count the number of parallel crossings in a standard configuration
of the geodesics $e_i$ and then notice that this number is invariant when
we isotope the strands to their final position.

Let $\tau\in\R^2$ be a vector with positive coordinates having irrational ratio, and 
let $v_{min},v_{max}$ be respectively the 
unique vertices of
$N$ which minimize, respectively maximize $v\cdot \tau$. 
Let $N_R$ be the set of sides of $N$ between $v_{min}$ and $v_{max}$ in counterclockwise order
and $N_L$ be the set of sides of $N$ between $v_{max}$ and $v_{min}$ in counterclockwise order.
Then after a circular re-indexing, say that 
$e_1,\dots,e_k\in N_R$ and $e_{k+1},\dots,e_n\in N_L$.

Let $p=\varepsilon\tau\in[0,1)^2$ for some sufficiently small $\varepsilon$. Start by drawing all geodesics
$e_i$ in the torus through $p$. No geodesic contains the origin by irrationality of $\tau$; 
order the boundary points on the square counterclockwise starting from the origin. 
Then $e_i$ and $e_j$ have a parallel crossing at $p$ if and only if $e_i$ and $e_j$
are either both in $N_R$ or both in $N_L$. Moreover if $p$ is sufficiently
close to the origin all other crossings of
$e_i$ and $e_j$ will be parallel or non-parallel according to the crossing at $p$. 

Thus the number of parallel crossing strands in total is 
$$\sum_{1\le i<j\le k} |e_i\wedge e_j| + \sum_{k+1\le i<j\le n} |e_i\wedge e_j|.$$
However the area of the part of $N$ below the line through $v_{min}$ and $v_{max}$
is $1/2$ of
$$e_1\wedge e_2+(e_1+e_2)\wedge e_3+\dots+(e_1+\dots+e_{k-1})\wedge e_k=
\sum_{1\le i<j\le k} |e_i\wedge e_j|,$$
as can be seen by triangulating this polygon using extra edges from
$v_{min}$ to each 
other $v$. Similarly for the upper part of $N$.

Now as we isotope the geodesics $e_i$, without letting a strand cross
the origin, it is 
easy to see that the number of parallel
crossings does not change.

\end{proof}

Let us denote by $B$ and $W$ the number of black and white faces 
for a triple point diagram graph $G$ on the torus arising  from a Newton
polygon $N$. Let $V$ be the number of vertices of $G$, that is the
number of the  triple crossings.

\bp \la{IP}
One has 
$$
B = W = V= \mbox{\rm number of the triple crossings} 
= 2(\mbox{\rm area of  $N$}).
$$
\ep

\begin{proof} Euler's formula 
for the triple point diagram $G$ 
gives $V-E+F=0$. Counting flags (vertex, edge) we get $6V=2E$. 
Therefore $F = 2V$.

Lemma \ref{4.18.10.11001} implies that $B=W$. Indeed, the white (respectively black) domains 
of a triple point diagram $G$  
became the white (respectively black) vertices of the bipartite graph $\Gamma$. Thus
$F=B+W$ implies $B=W=V$. Lemma \ref{ntc} implies the last statement. 

\end{proof}

\subsection{The number of Hamiltonians} \la{numHam}
Let $i(N)$ and $e(N)$ be the number of interior and exterior points 
for the Newton polygon $N$. 
Pick's formula plus Proposition \ref{IP} gives
\be \la{4.18..10.3}
2i(N) + e(N) -2 = 2(\mbox{\rm area of  $N$}) = B.
\ee
Observe that $i(N)$ is the number of Hamiltonians,  and $e(N)$ is the
number of Casimirs plus one, 
that is the dimension of center of the Poisson algebra 
${\mathcal O}({\L}_\Gamma)$ minus one. On the other hand, 
$B$ is the number of faces minus one, and ${\rm rk}H_1(S, \Z)=2$. So 
$B+1 = {\rm dim}{\L}_\Gamma$. So (\ref{4.18..10.3}) implies that  
$$
2(\mbox{number of Hamiltonians}) = {\rm dim}{\L}_\Gamma - 
{\rm dim}\Bigl({\rm Center}~{\mathcal O}({\L}_\Gamma)\Bigr).
$$

\section{Dimers, cluster Poisson transformations, and quantum integrability}
\label{spidermovesection}

\subsection{Dimer models on surfaces and cluster Poisson varieties.} 

\subsubsection{Gluing the conjugated surfaces according to a spider move}
Given a bipartite ribbon graph $\Gamma$, let $\widehat S_\Gamma$ be 
the corresponding 
conjugated surface. 
Since the conjugated graph $\widehat \Gamma$ coincides with $\Gamma$ as a graph, 
there are canonical isomorphisms:
\be \la{IL}
H_1(\widehat S_{\Gamma}, \Z) = H_1(\widehat \Gamma, \Z) = H_1(\Gamma, \Z). 
\ee
We denote by $\Lambda_\Gamma$ the lattice (\ref{IL}). It is equipped with 
the skew symmetric integral bilinear form $(\ast, \ast)_{\widehat \Gamma}$ 
given by the intersection pairing on 
$\widehat S_\Gamma$. The  boundaries $\partial F$  of the faces 
$F$ of $\Gamma$, whose orientations 
are induced  by the orientation of $S$,  give rise to a collection of 
cycles $\{\gamma_F\}$ on $\widehat S_\Gamma$. These cycles plus the generators $\alpha_1, ..., \alpha_{2g}$ of $H_1(S, \Z)$ generate $\Lambda_\Gamma$, and satisfy the only relation 
$\sum_F\gamma_F=0$.

A {\it spider move} is a transformation of a bipartite graph shown in dashed lines
on Figure \ref{di30}. We show on the same figure the transformation of the
accompanying 
zig-zag paths (solid), and the unique cyclic order $(1,3,2,4)$ of the zig-zag paths
compatible with the cyclic orders of the triples of the zig-zag paths
at the black vertices. The white vertices give rise to the
opposite cyclic order. These cyclic orders are the same for both graphs.

\begin{figure}[ht]
\centerline{\epsfbox{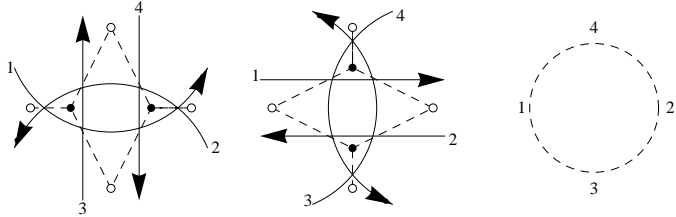}}
\caption{A spider move and zig-zag paths.\label{di30}}
\end{figure} 
Let $F$ be the  rectangular face on  $\Gamma$ which 
determines a spider move $s: \Gamma \lra \Gamma'$. 
Denote by $F_1, F_2, F_3, F_4$ 
the faces of $\Gamma$ sharing edges with $F$. 
We assume for simplicity of the exposition that these are different faces. 
The spider move does not affect 
the graph $\Gamma$ outside of a little domain $U$ containing the face $F$ and the 
two extra edges emanating from the two black vertices of $F$. 
We identify the conjugated surfaces $\widehat S_{\Gamma}$ and 
$\widehat S_{\Gamma'}$ outside of the $U$. 
We want to extend this identification to a homeomorphism of surfaces
\be \la{HOFS}
\widehat s: \widehat S_{\Gamma} \lra \widehat S_{\Gamma'}. 
\ee
\bl \la{UDHS11}
\la{UDHS}
There is a unique up to isotopy homeomorphism (\ref{HOFS}) 
identifying the boundary loops and identical outside of $U$-domains.
Let $\gamma_G':= \widehat s(\gamma_G)$. Then one has 
\be \la{HOFS1}
\gamma_F'=-\gamma_F, \qquad \gamma_{F_1}'=\gamma_{F_1} + \gamma_F, 
\quad \gamma_{F_2}'=\gamma_{F_2}, \quad 
\gamma_{F_3}'=\gamma_{F_3}+\gamma_F, \quad \gamma_{F_4}'=\gamma_{F_4}.
\ee
Otherwise $\gamma'_G=\gamma_G$. 
\el

\begin{figure}[ht]
\epsfxsize340pt
\centerline{\epsfbox{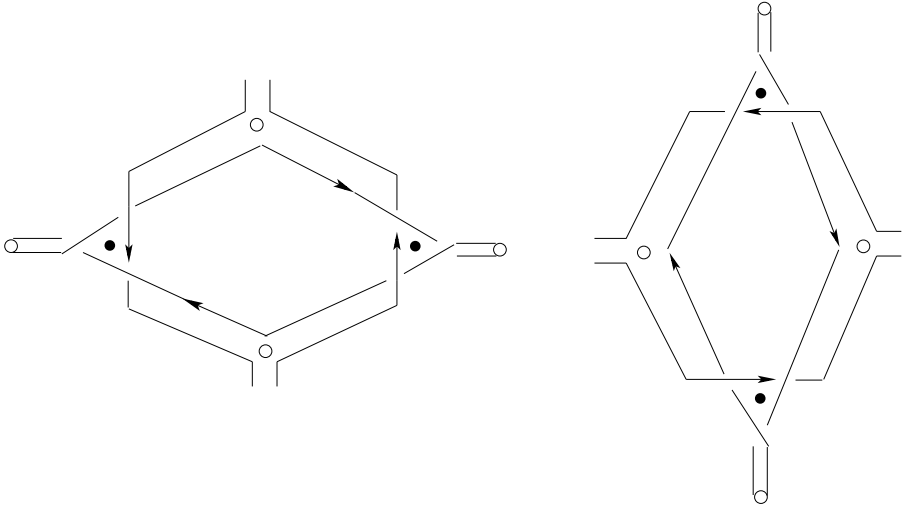}}
\caption {Identifying the conjugated surfaces according to a spider move. 
Recall our convention that a zig-zag path goes 
counterclockwise around black vertices and clockwise around the white ones.
When we turn the surface over, the notion of counterclockwise and clockwise
are interchanged. Since to get $\hat\Gamma$ we turned the white vertices
over, the new orientations is counterclockwise for all vertices. (In the figure, we are seeing
the underside of the surface near the black vertices.)\label{di72}}
\end{figure}
\begin{figure}[ht]
\epsfxsize430pt
\centerline{\epsfbox{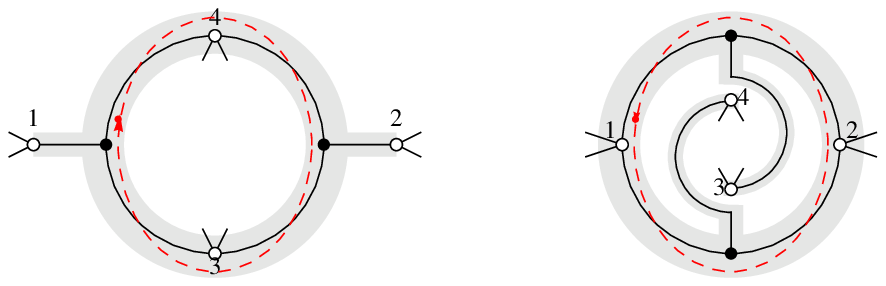}}
\caption{Identifying the conjugated surfaces according to a spider move, 
shown in the untwisted version.\label{untwisted}}
\end{figure}

\begin{proof} Figure \ref{di72} (and Figure \ref{untwisted} in the
untwisted version) shows the ribbon   
graphs $\widehat \Gamma$ and $\widehat \Gamma'$ 
inside of the $U$-domains. Each component of the oriented boundary of a ribbon
 is a part of an oriented zig-zag path on $\Gamma$. 
Any map (\ref{HOFS}) preserving the boundaries 
and identical outside of $U$-domains identifies 
the zig-zag paths on the original graphs.  
This determines the gluing uniquely up to an isotopy. 
So we get the first claim of the Lemma. 

On each surface the four zig-zag paths inside of $U$ bound an annulus. 
A pair of parallel zig-zag paths together with a pair of 
connecting them little segnents form a boundary component of the annulus. 
Our gluing process amounts to gluing these two annuli.

Taking a loop going around the face on the left 
of Figure \ref{di72}, and tracing its image
on the right surface, we get $\gamma_F'=-\gamma_F$. Similarly we get 
the other identities. 
\end{proof}

\subsubsection{Seeds, mutations, conjugated surfaces and spider moves}

\begin{definition} \la{D1}
A {\it seed}  ${\bf s}$ is a datum 
$
\Bigl(\Lambda, \{e_i\}, (\ast, \ast)\Bigr), 
$ where 
\begin{itemize}
\item $\Lambda$ is a lattice, i.e. a free abelian group, $\{e_i\}$ is a 
collection of non-zero vectors in $\Lambda$;
\item $(\ast, \ast)$ a skew-symmetric $\Z$-valued 
bilinear form on $\Lambda$. 
\end{itemize} 
\end{definition}

\begin{definition}\la{D2}
A mutation of a seed ${\bf s}$ 
in the direction of a basis vector $e_k$ is a new seed $\widetilde {\bf s}$.
 The lattice and the form of the seed $\widetilde {\bf s}$ 
are the same as of ${\bf s}$. 
The collection of vectors $\{\widetilde e_i\}$ of $\widetilde {\bf s}$ is defined by 
applying a half reflection at $e_k$ map to the old one:
\begin{equation} \label{12.12.04.2a}
\widetilde e_i := 
\left\{ \begin{array}{lll} e_i + (e_{i}, e_k)_+e_k
& \mbox{ if } &  i\not = k\\
-e_k& \mbox{ if } &  i = k.\end{array}\right. \qquad a_+:={\rm max}(a, 0).
\end{equation}
\end{definition}
Definitions \ref{D1} and \ref{D2} generalize slightly the 
standard definitions of seeds and seed mutations, see 
Sections 1.2.1-1.2.4 of \cite{FG1}. Unlike {\it loc. cit}, we do not demand neither that the vectors 
$e_i$ are linearly independent, nor 
that they span the lattice $\Lambda$.\footnote{We simplify the 
discussion by considering only the
``simply-laced'' case, with the ``multipliers''
$d_i=1$.} 

Here is our main example. 
\bd
The seed ${\bf s}_\Gamma$ 
assigned to a bipartite graph $\Gamma$ on a genus $g$ surface $S$ 
is  
$$
{\bf s}_\Gamma:= (\Lambda_\Gamma, (\ast, \ast)_{\widehat \Gamma}, \{\gamma_F\}).
$$
 %Here $\Lambda_\Gamma$ is the lattice (\ref{IL}), 
%$(\ast, \ast)_{\widehat \Gamma}$ is given by the 
%intersection form on the conjugated surface, and $\{\gamma_F\}$ are the vectors 
%provided by the faces on $\Gamma$.
\ed

\bl The spider move $\Gamma \lra \Gamma'$ centered at a face $F$  
leads to a mutation of seeds $
{\bf s}_\Gamma \lra {\bf s}_{\Gamma'}
$ 
in the direction of the vector $\gamma_F$.
\el

\begin{proof} 
The only non-zero values $(\gamma_F, \gamma_{G})_{\widehat \Gamma}$ are 
\be \la{NZV}
(\gamma_F, \gamma_{F_1})_{\widehat \Gamma} = 1, \quad (\gamma_F, \gamma_{F_2})_{\widehat \Gamma} = -1, \quad (\gamma_F, \gamma_{F_3})_{\widehat \Gamma} = 1, 
\quad (\gamma_F, \gamma_{F_4})_{\widehat \Gamma} = -1
\ee
Thus formalae (\ref{HOFS1}) from Lemma \ref{UDHS11} 
are just equivalent to the mutation formulae 
(\ref{12.12.04.2a}).
\end{proof}

Therefore the geometric objects --  the conjugated surfaces and the 
spider moves -- are encoded, respectively, 
by the corresponding seeds and their mutations. 

\subsubsection{Quantum cluster transformations \cite{FG1}}\la{sec4.1.3}
A lattice $\Lambda$ with  a skew-symmetric $\Z$-valued bilinear form 
$(\ast, \ast)$ gives rise to a quantum torus $\ast$-algebra 
${{\bf T}}_{\Lambda}$. 
The algebra ${{\bf T}}_{\Lambda}$ has a basis $\{X_v\}$ over the ring $\Z[q, q^{-1}]$ parametrized by the 
vectors  $v$ of the lattice $\Lambda$. The multiplication is given by 
$$
q^{-(v_1, v_2)}X_{v_1} X_{v_2} = X_{v_1+v_2}.
$$
There is an involutive antiautomorphism 
$$
\ast: {{\bf T}}_{\Lambda}\lra {{\bf T}}_{\Lambda}, \qquad  
\ast(X_{v}) = X_{v}, ~
\ast(q) = q^{-1}.
$$
Let us choose an order of the basis $\{v_i\}$ of 
$\Lambda$. Then 
\begin{equation} \label{4.28.03.11xewr}
X_{v} = q^{-\sum_{i<j}a_{i}a_{j}(v_i, v_j)}\prod_{i=1}^nX_i^{a_i}, \qquad v = \sum_{i=1}^n a_iv_i, 
\end{equation}

Consider the following 
formal power series, called the 
{\it $q$-exponential}:
 \be \la{psi}
{\bf \Psi}_q(x):= \prod_{a=1}^{\infty}(1+q^{2a-1}x)^{-1} = \frac{1}{(1+qx)(1+q^3x)(1+q^5x)\ldots}.
\ee
It is characterized up to a constant by the 
difference relation
\begin{equation} \label{11.19.06.20}
{\bf \Psi}_q(q^2x) = (1+qx){\bf \Psi}_q(x). 
\end{equation} 

Denote by ${\T}_{\Lambda}$ the non-commutative 
fraction field of the algebra ${\bf T}_{\Lambda}$. 
The {\it quantum mutation map} $\mu_{e_k}^q$ 
in the direction of a vector $e_k$ is an automorphism of the skew field ${\T}_{\Lambda}$ 
given by the conjugation by 
${\bf \Psi}_{q}(X_{e_k})$: 
\be \la{f4}
\mu^q_k:={\rm Ad}_{{\bf \Psi}_{q}(X_{e_k})}: {\T}_{\Lambda} \lra {\T}_{\Lambda}.
\ee
Although  ${\bf \Psi}_{q}(X_{e_k})$ is not a rational function, this 
is a rational map.

\subsubsection{Cluster Poisson varieties \cite{FG1}} 
Cluster Poisson varieties, also known as 
cluster ${\mathcal X}$-varieties \cite{FG1}, 
are  geometric objects which are in duality  
 to  cluster algebras of Fomin-Zelevinsky \cite{FZI}.

We assign to a seed ${\bf s}$ 
 a complex algebraic torus 
${\mathcal X}_{\Lambda}:= {\rm Hom}(\Lambda, \C^*)$, called the {\it seed torus}. 
A vector $v \in \Lambda$ gives rise to a function $X_v$ 
on the torus obtained by evaluation at $v$.  
A mutation in the direction of a vector $e_k$ 
gives rise to a birational isomorphism 
$\mu_{e_k}: {\mathcal X}_{\Lambda} \lra {\mathcal X}_{\Lambda}$, 
obtained by specilalising $q=1$ in (\ref{f4}). It 
acts on the function  
$X_v$ 
by 
\begin{equation} \label{f3a}
\mu_{e_k}^*: X_{v} \lms 
    X_v(1+X_{e_k})^{-(e_i, e_k)}. 
\end{equation} 
The Poisson bracket is  the quasiclassical limit of the commutator:
$$
\{X_{v_1}, X_{v_2}\}:= \lim_{q\to 1}[X_{v_1}, X_{v_2}]/2(q-1) = (v_1,v_2 ) X_{v_1}X_{v_2}.
$$ 
Thus (\ref{f3a}) is a Poisson map. 
An isomorphism of seeds leads to a Poisson
 isomorphism of the seed tori. Compositions of seed mutations and seed isomorphisms are 
 {\it seed cluster transformations}. 
So a seed cluster transformation gives rise to a Poisson birational 
isomorphism of the seed tori. A seed cluster transformation ${\bf s} \to {\bf s}$ 
 acting trivially on the functions $X_v$ 
is  a {\it trivial seed cluster transformation}. 
Consider all seeds 
obtained from a given seed ${\bf s}$ by a sequence of mutations. 
We assign to these seeds  their  seed torus, and glue pairs of tori 
related by mutations into a 
space $\widetilde {\cal X}$ by using 
birational isomorphisms (\ref{f3a}). Trivial seed cluster transformations 
give rise to automorphisms of $\widetilde {\cal X}$. Taking the quotient 
 of $\widetilde {\cal X}$ by these automorphisms, we get a 
cluster Poisson variety ${\cal X}$.\footnote{It 
could be a non-separated space rather then a variety.} 

Non-trivial cluster seed transformations ${\bf s} \to {\bf s}$ 
lead to automorphisms of the ${\cal X}$, which form the 
{\it cluster modular group}. We will see such examples in Section 6.

\vskip 3mm
A cluster Poisson variety is nothing else but a collection of 
seed tori ${\mathcal X}_{\Lambda}$ 
glued via the cluster transformations. 
The algebra of regular functions ${\mathcal O}({\mathcal X}_{\Lambda})$ 
on a cluster torus is an algebra of Laurent polynomials. So 
a cluster Poisson variety is described by a collection of algebras of Laurent polynomials and birational automorphisms (\ref{f3a})  
providing the gluing transformations. 
The algebra ${\mathcal O}({\mathcal X})$ of regular functions on 
a cluster Poisson variety ${\mathcal X}$ can be identified with the 
subalgebra of the algebra  ${\mathcal O}({\mathcal X}_{\Lambda})$ consisting of the 
elements which remains Laurent polynomials after any cluster transformation.

This suggests that one should think about the quantum torus algebra ${\bf T}_{\Lambda}$  
as of the algebra of regular functions 
on a non-commutative quantum torus, and that these algebras together with the mutation maps 
(\ref{f4}) should be thought of as a quantum cluster variety. 
We define the algebra ${\mathcal O}_q({\mathcal X})$ of regular functions 
on a quantum cluster variety as the subalgebra of the quantum torus algebra ${\bf T}_{\Lambda}$ 
consisting of all elements which remain Laurent polynomials after any quantum cluster transformation.

\subsubsection{Gluing the moduli spaces ${\cal L}_\Gamma$ according to a spider move} Instead of using the 
set of all seeds related to an initial seed ${\bf s}$ 
by seed cluster transformations, we can 
pick any subset of this set, and define a Poisson space 
by gluing the corresponding cluster tori. 
The cluster Poisson spaces which we need in our paper 
are defined this way: we use only finitely many seeds corresponding to minimal bipartite graphs 
associated with a given Newton polygon.

\subsubsection{Traditional cluster Poisson variety related to the dimer model}
\la{clusterpoissonsection}
Traditional cluster Poisson varieties are the ones 
defined by using seeds where the vectors $\{e_i\}$ form a basis of the lattice. Let 
$$
{\rm T}_S:= H_1(S, \C^*)\stackrel{\sim}{=}(\C^*)^{2g}. 
$$
It is  the group of line bundles with flat 
connections 
on  $S$. The embedding $\Gamma \hookrightarrow S$ provides a free  Poisson action 
of the group ${\rm T}_S$ on the moduli space ${\mathcal L}_\Gamma$. 

The monodromy of a line bundle on $\Gamma$ around a face $F$  is a function 
$W_F$ on  ${\L}_\Gamma$. 
The product of all $W_F$'s  is  $1$. Define
$$
{\mathcal X}'_\Gamma = (\C^*)^{\{\mbox{faces of $\Gamma$}\}}. 
$$
Let ${\mathbb W}: {\mathcal X}'_\Gamma \to \C^*$ be the product of all coordinates. 
Assigning to a  line bundle on $\Gamma$ its monodromies around the faces   
we get a canonical isomorphism  
$$
({\mathcal L}_\Gamma/{\rm T}_S) \stackrel{\sim}{\lra}
   {\rm Ker}\Bigl({\mathcal X}'_\Gamma\stackrel{{\mathbb W}}{\lra} \C^*\Bigr). 
$$
The birational Poisson transformations (\ref{gm}) 
are 
${\rm T}_S$-equivariant. So to describe them on the quotient is 
just the same as to describe their action on the face weights.

\bl \la{1,1} The algebraic tori $\{{\mathcal X}'_\Gamma\}$ 
parametrized by   
equivalent minimal bipartite graphs $\Gamma$  
on  ${S}$ are glued  by cluster Poisson transformations   (\ref{gm})
 into a cluster Poisson variety  
${\mathcal X}'_N$. 
The maps (\ref{gm}) preserve the function ${\mathbb W}$. 
So there is a hypersurface ${\mathcal X}^0_N \subset {\mathcal X}'_N$ 
given  by  the equation ${\mathbb W} = 1$. 
The group ${\rm T}_S$ acts freely on   ${\mathcal X}_N$. There is a Poisson isomorphism
\be \la{MTTHH}
{\mathcal X}_N/{\rm T}_S \stackrel{\sim}{\lra}
   {\mathcal X}^0_N. 
\ee
\el

\begin{proof} We just need to show that 
the product of the face weights is preserved by the spider move 
cluster transformations. This 
 is clear from formulas (\ref{mll}) below.
\end{proof}

\subsection{Spider moves and partition functions}\la{sec4.2q}
We have seen that elementary transformations $\Gamma \to \Gamma'$ 
give rise to rational maps between the moduli spaces  
${\cal L}_\Gamma \to {\cal L}_{\Gamma'}$. In this Section we show that they
 do not change the partition functions for 
the dimer models 
related to the graphs $\Gamma$ and $\Gamma'$.  
Moreover, we show that the rational maps are characterised 
by the condition that they preserve the partition functions.

\bt \la{mutations}
Given a spider move, there is a unique rational transformation of the face
weights 
preserving the modified 
partition function ${\mathcal P}_\alpha$. This transformation is a cluster Poisson transformation, see Figure \ref{di32}:
\begin{eqnarray}\nonumber\la{TF} 
W'&=& W^{-1},\\ 
W_i' &=& W_i(1+W), ~~~ i=1,3,\la{mll}\\\nonumber
W_j' &=& W_j(1+W^{-1})^{-1},~~~
j=2,4. 
\end{eqnarray}
It preserves the product of the face weights. 
\et
\begin{figure}[ht]
\centerline{\epsfbox{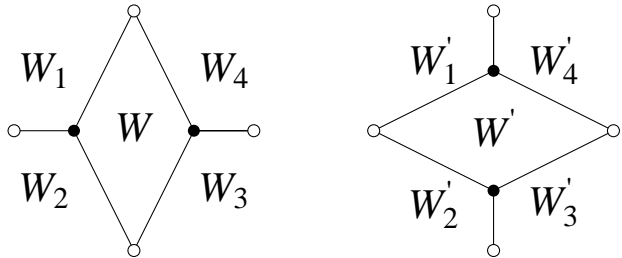}}
\caption{\label{di32}A spider move and a mutation of the face weights.}
\end{figure} 

\begin{proof} Let us show first that formula 
(\ref{TF}) is a cluster Poisson transformation. Precisely: 

\bl
Formula 
(\ref{TF}) describes  
the cluster Poisson mutation centered at $\gamma_F$, where 
$F$ is the center of the spider move. 
\el

\begin{proof} 
Let us compare formula (\ref{TF}) with the formula for the cluster transformation at $\gamma_F$. For this purpose, 
let us write the action 
of the mutation at the vector $e_k$ 
 on the functions $X_{e_i'}$ in terms of the original functions 
$X_{e_i}$:
\begin{equation} \label{f3}
\mu_{e_k}^*: X_{e_i'} \lms \left\{\begin{array}{lll} X_k^{-1}& \mbox{ if } & i=k, \\
    X_i(1+X_k^{-{\rm sgn}(e_i, e_k)})^{-(e_i, e_k)} & \mbox{ if } &  i\neq k. \\
\end{array} \right.
\end{equation} 
Evidently, thanks to (\ref{NZV}), formula (\ref{TF}) 
is a specilaization of the general formula (\ref{f3}). 
\end{proof}

{\it Proof of Theorem \ref{mutations}.}
We  use the edge weights,
where the notation are as on Figure \ref{di31}. The equality of the
modified partition functions reads as follows:
\be \la{PPS1}
\frac{P_{\Gamma}}{M_{\alpha}} = \frac{P_{\Gamma'}}{M_{\alpha}'} 
\ee
where $P_{\Gamma}$ is the partition function of the dimer
model on the bipartite graph $\Gamma$ defined using the edge weights, and 
$$
M_{\alpha}:= \prod_{\rm edges ~E}{\rm weight}(E)^{\varphi(E)}.
$$
Here $\varphi(E) = \alpha_r-\alpha_l$ is determined by the function $\alpha$, see
(\ref{alpha1}), (\ref{alpha2}), and Figure \ref{di33}. 
Observe that the maps $\alpha$ for the graphs $\Gamma$ and $\Gamma'$
are the same maps.

The face weights are calculated via the edge weights as follows, where
$W_i^*$ are factors common for both graphs $\Gamma$ and $\Gamma'$, and
the edge weights are as on Figure \ref{di31}:
$$
W=\frac{dc}{be}, ~~W_1 = \frac{b}{a}W_1^*, ~~ W_2 =
\frac{a}{c}W_2^*~~, 
W_3 = \frac{e}{f}W_3^*, 
~~ W_4 = \frac{f}{d}W_4^*.
$$
$$
W'=\frac{AC}{BD}, ~~W'_1 = \frac{F}{A}W_1^*, ~~ W'_2 =
\frac{B}{E}W_2^*~~, 
W'_3 = \frac{E}{C}W_3^*, 
~~ W'_4 = \frac{D}{F}W_4^*.
$$
\begin{figure}[ht]
\centerline{\epsfbox{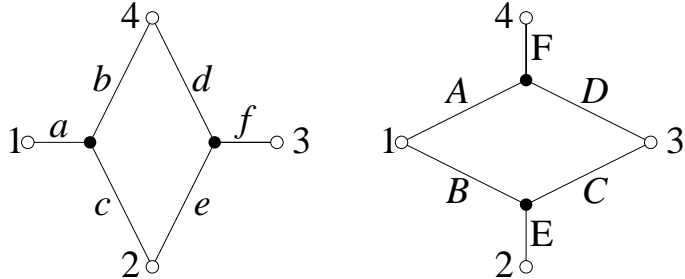}}
\caption{\label{di31}A spider move and a mutation of the edge weights.}
\end{figure} 

Let us number the external vertices on the graphs shown on Figure \ref{di31}
by $1,2,3,4$. We present the graphs as  unions of the {\it inside}
and {\it outside graphs}, where the inside graphs are the ones shown on  Fig
\ref{di31}. 
Given a pair of vertices $i,j\in\{1,2,3,4\}$, consider all perfect matchings 
where the vertices $i$ and $j$ are matched to the
outside vertices. Their contribution
to the partition function is the product of the total weight of
the perfect matchings for the outside graph  by the weight $p_{ij}$ of the perfect matchings
 for the  inside graph.  Similarly
$p'_{ij}$ for $\Gamma'$. One should have
\be \la{PPS}
p_{ij} = Cp_{ij}', 
\ee
i.e. the points in $\C\P^5$  with the
coordinates $(p_{12}:p_{13}: ... : p_{34})$ and $(p'_{12}:p'_{13}:
... : p'_{34})$ coincide. 
One has 
\be \la{MUTTFF1}
p_{12}= bf, ~~ p_{13}= be+dc, ~~ p_{14}= cf, ~~
p_{23}= ad, ~~p_{24}= af, ~~
 p_{34}= ae.
\ee
\be \la{MUTTFF2}
p'_{12}= CF, ~~ p'_{13}= EF, ~~ p'_{14}= DE, ~~
p'_{23}= BF, ~~p'_{24}= AC+BD, ~~
 p'_{34}= AE.
\ee
For example, there are exactly two matchings of
the inside graph which do not use the vertices $1,3$, and their total
weight is $p_{13}= be+dc$. 

Then for every pair $(ij)$ and $(kl)$ there is a condition
$$
\frac{p_{ij}}{p_{kl}} = \frac{p'_{ij}}{p'_{kl}}.
$$
Let us investigate its consequences. 
We deduce that $W'=W^{-1}$ by looking at
$$
\frac{p_{13}p_{24}}{p_{12}p_{34}} =
\frac{(be+dc)af}{(bf)(ae)} = 1 +\frac{dc}{be} = 1+W; 
$$
$$
\frac{p'_{13}p'_{24}}{p'_{12}p'_{34}} = \frac{(EF)(AC+BD)}{(CF)(AE)}
= 1+(W')^{-1}.
$$
The condition $\frac{p_{12}}{p_{24}} =
\frac{p'_{12}}{p'_{24}}$ leads to the following:
$$
\frac{p_{12}}{p_{24}} =\frac{b}{a}, ~~
\frac{p'_{12}}{p'_{24}} = 
\frac{F}{A} \frac{AC}{AC+BD} ~ ~ => ~~ W_1' = W_1(1+W).
$$
The rest of the relations (\ref{mll}), see Figure \ref{di32} are obtained by looking at
$\frac{p_{23}}{p_{24}}$, $\frac{p_{34}}{p_{24}}$,
and $\frac{p_{14}}{p_{24}}$. Conversely, transformation formulas
(\ref{mll}) imply that the vectors $(p_{ij})$ and $(p'_{ij})$ in $\C\P^5$
coincide.

Furthermore, we claim that  (\ref{PPS}) implies (\ref{PPS1}). The subtlety 
is that the modified partition function involves the function
$\alpha$. Let us set, see the right picture on Figure \ref{di30}, 
$$
q_1=p_3-p_1,
q_2=p_2-p_3, q_3=p_4-p_2, q_4=p_1-p_4.
$$ 
Then the denominators in (\ref{PPS1}) for the inner graphs are 
$$
M^{\rm in}_\alpha = a^{p_1-p_2}b^{p_2-p_3}c^{p_3-p_1}d^{p_4-p_2}
e^{p_1-p_4}f^{p_2-p_1} = (cf)^{q_1}(bf)^{q_2}(ad)^{q_3}(ae)^{q_4},
$$
$$
{M'}^{\rm in}_\alpha = A^{p_1-p_4}B^{p_4-p_2}C^{p_2-p_3}D^{p_3-p_1}
E^{p_3-p_4}F^{p_4-p_3} = (DE)^{q_1}(CF)^{q_2}(BF)^{q_3}(AE)^{q_4}.
$$
Therefore, using (\ref{PPS}), $M^{\rm in}_\alpha = C^{q_1+q_2+q_3+q_4}{M'}^{\rm
    in}_\alpha = C{M'}^{\rm in}_\alpha $ since $q_1+q_2+q_3+q_4=1$. 
So, using (\ref{PPS}) again, we get (\ref{PPS1}). 
\end{proof}

\begin{figure}[ht]
\centerline{\epsfbox{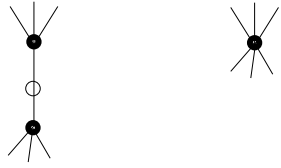}}
\caption{\label{di4}Shrinking a $2$-valent vertex.}
\end{figure}

There is another transformation, see Figure \ref{di4}, which obviously
does not change the face weights and the Poisson structure. 
Its inverse involves
splitting of a high valency vertices by adding valency two vertices.

\subsection{Quantum  integrable system} \la{QIS}

Recall the quantum torus algebra ${\bf T}_\Lambda$ assigned to a 
lattice $\Lambda$ with a skew-symmetric integral 
form $(\ast, \ast)$, see Section \ref{sec4.1.3}. 
Each vector $v\in \Lambda$  gives rise to an element 
$X_v\in {\bf T}_\Lambda$. 
 
Each Hamiltonian $H_{\alpha, a}$ is a sum of different monomials with coefficients $+1$. 
So it can be upgraded uniquely to an element of the   
quantum torus algebra ${\bf T}_\Lambda$ assigned to a minimal bipartite 
surface graph $\Gamma$:
\be \la{QQHHAAMM}
{\mathbb H}_{\alpha; a} := \sum_vX_v \in {\bf T}_\Lambda
\ee
where the sum is over the vectors $v$ parametrising the monomials in 
the classical Hamiltonian $H_{\alpha, a}$. The element (\ref{QQHHAAMM}) 
is the  quantum Hamiltonian. Similarly one defines 
quantum Casimirs corresponding to the classical Casimir monomials. 

\bp\la{LaurentP}
Let $\Gamma\to \Gamma'$ be a spider move. 
Then the corresponding quantum cluster transformation preserves 
the quantum Hamiltonians and Casimirs.
\ep

\begin{proof} The Casimirs lie in the center and thus 
commute with $\Psi_q(X_v)$. 

For the quantum Hamiltonians we use a version of the strategy employed
at Section \ref{sec4.2q}. First, let us do a calculation at $q=1$. 
Let us denote by $q_{ij}p_{ij}$ (respectively $q_{ij}'p_{ij}'$)
the part of the modified partition function 
involving the dimer covers which connect the vertices $i$ and $j$ to the outside 
vertices. Here $p_{ij}$ (respectively $p_{ij}'$) is the factor calculated 
by using the matchings involving only the six edges $a,b,c,d,e,f$ 
(respectively, $A,B,C,D,E,F$) on Figure \ref{di31}.

We write $A\sim B$ when $A/B$ is independent of 
\be \la{THEW}
W, W_1, W_2, W_3, W_4, ~~ \mbox{and ~~$W', W'_1, W'_2, W'_3, W'_4$}.
\ee
Since the product of all face weights is equal to one, we have 
$$
WW_1W_2W_3W_4\sim 1, \quad W'W'_1W'_2W'_3W'_4\sim 1.
$$
Below we work modulo these relations. 
The $1$-cycle assigned to any non trivial Laurent polynomial in $W, W_1, W_2, W_3, W_4$ 
has a non-trivial contribution 
of the  edges $a,b,c,d,e,f$. 

The $q_{ij}$  are
Laurent polynomials in the face weights for $\Gamma$,
 well defined  modulo a common Casimir  
used to correct  the partition function. 
 Therefore they do not 
depend on the left face weights in (\ref{THEW}). 
So we can 
calculate the contribution of  the $W$'s from  (\ref{THEW}) to the 
 ratios of $p$'s by looking 
at (\ref{MUTTFF1}). 
For example,  we get 
$$
\frac{p_{13}}{p_{12}} \sim \frac{be + dc}{bf} = \frac{e}{f} + \frac{dc}{bf} \sim 
W_3+ WW_3.  
$$ 
Indeed, the contribution of $a,b,c,d,e,f$ on $\Gamma$ to the $1$-cycle 
assigned to $W_3$ is  $[e] - [f]$, and the contribution to $WW_3$ is  $[d]+[c]-[b]-[f]$. 
These Laurent polynomials in $W$ are uniquely defined 
modulo $WW_1W_2W_3W_4\sim 1$. 

The same argumentation in general delivers the following result:
$$
\frac{p_{13}}{p_{12}}\sim W_3 + WW_3, \quad \frac{p_{14}}{p_{12}}\sim WW_3W_4, \quad  
\frac{p_{23}}{p_{12}}\sim WW_2W_3,
$$
$$
\frac{p_{24}}{p_{12}}\sim W_1^{-1}, \quad  
\frac{p_{34}}{p_{12}}\sim W_1^{-1}W_3.
$$
Similarly, we get 
$$
\frac{p'_{13}}{p'_{12}}\sim W_3', \quad \frac{p_{14}}{p_{12}}\sim W_3'W_4', \quad  
\frac{p'_{23}}{p'_{12}}\sim W_2'W_3', 
$$
$$
 \frac{p'_{24}}{p'_{12}}\sim 
(W_1')^{-1} + W(W_1')^{-1}, \quad  
\frac{p'_{34}}{p'_{12}}\sim (W_1')^{-1}W'_3.
$$
 Now it is straitforward to chack that the mutation at $W$ 
formula amounts to mapping the five Laurent polynomilas in $W'$'s 
to the similar Lauren polynomials in $W$'s. 
Moreover, the same is true for the quantum mutation maps, assuming that 
the Laurent polynomials above are written with the correcting 
$q$-factors provided by the  formula (\ref{4.28.03.11xewr}): 
$$
W_3' \lms W_3(1+qW).\quad q^{-1}W_3'W_4'\lms W_3WW_4, ~~\ldots
$$It remains to notice that $WW_1W_2W_3W_4 \lms W'W_1'W_2'W_3'W_4'$.
\end{proof}

\bt \la{completeintegrabilitytheorem2}
Let $\Gamma$ be a minimal bipartite graph on a torus $\T$. Then 
the quantum Hamiltonians (\ref{QQHHAAMM}) commute. 

The quantum Casimirs generate the center. 
The quantum Hamiltonians ${\mathbb H}_{\alpha; a}$ together with the quantum Casimirs 
 provide a quantum cluster integrable system in ${\cal O}_q({\cal X}_N)$. 
\et

\begin{proof}
One has 
$$
[X_{v}, X_w] = (q^{(v, w)} - q^{(w, v)})X_{v+w}.
$$
Using the notation 
of Lemma  \ref{4.16.10.1}, we have
$$
[X_{\mu_1}, X_{\mu_2}] + [X_{\widetilde \mu_1}, X_{\widetilde \mu_2}] = 
(q^{(\mu_1,\mu_2 )} - q^{( \mu_2, \mu_1)})X_{\mu_1+\mu_2} +
(q^{(\widetilde \mu_1, \widetilde \mu_2 )} - 
q^{(\widetilde \mu_2, \widetilde \mu_1)})X_{\widetilde \mu_1+ \widetilde \mu_2}.
$$
By construction we have 
$
\mu_1+\mu_2 = \widetilde \mu_1+ \widetilde \mu_2. 
$ 
So  the commutator is zero by Lemma \ref{4.16.10.1}. 
Therefore the quantum Hamiltonians commute. The claim about 
Casimirs is obvious since they are monomials. 
The last claim follows Proposition \ref{LaurentP} 
and  Theorem \ref{completeintegrabilitytheorem}ii). 

\end{proof}

\section{Resistor network model and its cluster nature}\label{resistornetsection}

\subsection{Resistor network model}
Let $G$ be a (not necessarily bipartite) surface graph on  a torus $\T
$.
Let $c$ be a function on the edges of $G$ with values in $\C^*$, called 
the {\it conductance function}, considered up to a multiplication by a non-zero scalar.\footnote{Usually in applications the conductance function is a positive real-valued function.} A graph with a conductance function on its edges is called a \emph{resistor network}.

\subsubsection{Laplacians}

A line bundle with connection $V$ on $G$ gives rise to a vector space 
$$
\V= \oplus_{v}V_v,
$$ given by the sum of the fibers $V_v$ of the line bundle  over the vertices $v$ of $G$, and a Laplace operator $\Delta:\V \to \V$ defined by
$$
\Delta(f)(v) := \sum_{v'\sim v}c(v,v')(f(v)-i_{v'v}f(v'))
$$
where the sum is over neighbors $v'$ of $v$, and 
$i_{v'v}f(v')$ is the parallel transport of the vector $f(v')$ to the fiber over $v$. 

Suppose now that the bundle with connection $V$ on $G$ is flat, 
that is, the monodromies around the faces of $G$ 
are trivial. The connection still has non-trivial monodromies $z_1,z_2$ 
around generators of $H_1(\T, \Z)$. 
The determinant of $\Delta$ is a Laurent polynomial $\det\Delta=P(z_1,z_2)$. It is symmetric, since $\Delta$ is 
Hermitian when $|z_1|=|z_2|=1$ and $c$ is a positive real function: 
\begin{equation}\la{zs}
P(z_1,z_2)= P(z_1^{-1},z_2^{-1}).
\end{equation}
 
The Laplacian determinant is a weighted sum of combinatorial
objects called \emph{cycle-rooted spanning forests} (CRSFs), see \cite{K.laplacian}:
these are maximal subsets of edges of $G$
in which each connected component contains a unique
cycle, and this cycle has nontrivial homology on the torus. The weight of a
CRSF is the product over its edges of the conductances, times the
product over its cycles of $2-w-1/w$ where $w$ is the monodromy of the flat
connection on that cycle. 

\begin{theorem}[\cite{K.laplacian}]\label{CRSFs=detLap} We have
$$\det\Delta = \sum_{\text{CRSFs }C}\,\prod_{e\in C}c(e)\prod_{\text{cycles }\gamma}2-w(\gamma)-1/w(\gamma).$$
\end{theorem}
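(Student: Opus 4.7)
The plan is to prove this by a Cauchy--Binet expansion of $\det\Delta$ in the style of the classical matrix-tree theorem, suitably twisted by the connection. After fixing trivializations of $V_v$ at each vertex, $\Delta$ becomes a concrete $|V|\times|V|$ matrix with $\Delta_{vv} = \sum_{v'\sim v} c(v,v')$ and $\Delta_{vv'} = -c(v,v')\phi_{v'v}$. For each edge $e=\{v,v'\}$, fix an orientation $v\to v'$ and define two vectors in $\C^V$ by $\alpha_e = \delta_v - \phi_{vv'}\,\delta_{v'}$ and $\beta_e = \delta_v - \phi_{v'v}\,\delta_{v'}$. Using $\phi_{vv'}\phi_{v'v}=1$, a direct check gives the asymmetric factorization $\Delta = \sum_e c(e)\,\alpha_e\beta_e^{\top} = A^{\top} C\, B$, where $A$ and $B$ are the $|E|\times|V|$ matrices with rows $\alpha_e^{\top}$ and $\beta_e^{\top}$ respectively, and $C=\mathrm{diag}(c(e))$. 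Cauchy--Binet then yields
$$\det\Delta \;=\; \sum_{S\subset E,\;|S|=|V|} c(S)\,\det(A|_S)\,\det(B|_S),\qquad c(S)=\prod_{e\in S}c(e).$$

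Next I would identify which $S$ contribute. Since $|S|=|V|$, the subgraph $(V,S)$ has Euler characteristic zero, so $\det(A|_S)$ can be nonzero only when each connected component of $(V,S)$ is unicyclic, that is, exactly when $S$ is a CRSF (a priori with cycles of any homology class on $\T$). For such an $S$, I would evaluate $\det(A|_S)$ by inductively peeling leaves: at a leaf $v'$ attached by an edge $e=v\to v'$, the column of $v'$ in $A|_S$ has a single nonzero entry $-\phi_{vv'}$ at row $e$, so Laplace expansion along that column multiplies the running determinant by $\pm\phi_{vv'}$ times the smaller minor. After all tree edges are peeled, each component reduces to its unique cycle, whose cyclic block has determinant $\pm(1-w(\gamma))$ by a direct circulant-type computation. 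The same procedure on $B|_S$ yields factors $\pm\phi_{v'v}$ on tree edges and $\pm(1-w(\gamma)^{-1})$ on cycles. When one multiplies $\det(A|_S)\det(B|_S)$, the tree-edge $\phi$-factors cancel in pairs since $\phi_{vv'}\phi_{v'v}=1$, and the cycle contributions combine to $(1-w(\gamma))(1-w(\gamma)^{-1}) = 2 - w(\gamma) - w(\gamma)^{-1}$, reproducing the claimed weights.

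Finally, the a priori sum ranges over all CRSFs, but since the connection is flat on every face, any cycle $\gamma$ in $G$ that is null-homologous on $\T$ is, modulo face boundaries, trivial, so its monodromy equals $w(\gamma)=1$ and its weight $2-w-w^{-1}$ vanishes; only CRSFs whose cycles are nontrivially-homologous on $\T$ survive, matching the stated formula. The main obstacle I anticipate is the sign bookkeeping in the peeling step: I expect the cofactor signs from the successive Laplace expansions in $A|_S$ and $B|_S$ to mirror each other and cancel in the product, but verifying this uniformly across all possible tree shapes will require a careful induction on the number of tree edges rather than a one-line symmetry argument, especially to ensure the final sum comes with the uniform $+1$ coefficients stated in the theorem.
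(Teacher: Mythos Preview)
The paper does not prove this theorem; it is quoted from \cite{K.laplacian}. Your Cauchy--Binet argument is correct and is essentially the proof given there, so there is nothing to compare against in the present paper.

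Your one stated worry, the sign bookkeeping in the leaf-peeling, actually resolves cleanly and does not require a delicate induction. The key observation is that $A|_S$ and $B|_S$ have their nonzero entries in exactly the same positions (row $e$, columns $v$ and $v'$). Hence at every Laplace expansion step along a leaf column, the cofactor sign $(-1)^{i+j}$ is the \emph{same} for both determinants, so the product picks up $((-1)^{i+j})^2=1$. The scalar factors are $-\phi_{vv'}$ and $-\phi_{v'v}$ (or $1$ and $1$ if the leaf is the tail), whose product is $1$ in either case. Thus $\det(A|_S)\det(B|_S)$ is exactly preserved under leaf removal. For the residual cycle blocks, the same ``same-pattern'' observation shows that any row/column reordering needed to reach block form contributes equal signs to both determinants, hence $+1$ to the product; and reversing the fixed orientation of an edge multiplies the corresponding row of $A$ by $-\phi_{v'v}$ and of $B$ by $-\phi_{vv'}$, again with product $1$. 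So after reducing to a consistently oriented cycle one gets $\det=1-w(\gamma)$ and $\det=1-w(\gamma)^{-1}$ directly, and the uniform coefficient $+1$ in the final sum follows without further case analysis.
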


Two torus graphs $G_1,G_2$ with conductances $c_1,c_2$ are said
to be \emph{electrically equivalent} if their associated Laplacian
determinants $P_1(z_1,z_2)$ and $P_2(z_1,z_2)$ are proportional
(that is, their ratio is a monomial in $z_1,z_2$).
A weaker notion is topological equivalence:
torus graphs $G_1,G_2$ are \emph{topologically equivalent} if
for (any choice of) positive conductances, 
the Newton polygons of $P_1(z_1,z_2)$
and $P_2(z_1,z_2)$ are the same.

\subsubsection{Dual graph} 

Given a resistor network on a surface, the dual graph $G'$ is also a resistor network,
where we assign conductance $1/c$ to an edge dual to an edge of conductance $c$.
The Laplacian determinants on $G$ and $G'$ are related: 

\begin{lemma} For a graph $G$ with dual $G'$ having the same flat line bundle, we have
\be\label{Delta=Delta}
\left(\prod_{\text{edges}}c(e)\right)\det\Delta_{G'}=\det\Delta_G.
\ee 
\end{lemma}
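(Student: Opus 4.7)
My plan is to expand both sides of the claimed identity using the CRSF formula of Theorem \ref{CRSFs=detLap} and match terms via a natural weight-preserving bijection between CRSFs of $G$ and CRSFs of $G'$. For a subset $C \subseteq E(G)$, define its \emph{complementary dual} $C^* := \{e^* \in E(G') : e \notin C\}$, i.e., the collection of dual edges to edges \emph{not} in $C$. The main structural claim I would establish is: the map $C \mapsto C^*$ is a bijection between CRSFs of $G$ and CRSFs of $G'$, and under it the multiset of homology classes (in $H_1(\T,\Z)$) of cycles of $C$ coincides, up to sign, with that of cycles of $C^*$.

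The edge count for this bijection is immediate from Euler's formula on the torus, $|V(G)| - |E(G)| + |F(G)| = 0$. Since any CRSF has as many edges as vertices (each component is unicyclic), $|C^*| = |E(G)| - |V(G)| = |F(G)| = |V(G')|$, matching the required count for a CRSF of $G'$.

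Granting the bijection, the identity reduces to a direct weight comparison:
\begin{align*}
\Bigl(\prod_e c(e)\Bigr)\det\Delta_{G'}
&= \sum_{C'} \Bigl(\prod_e c(e)\Bigr) \prod_{e^* \in C'} \frac{1}{c(e)} \prod_{\gamma' \subset C'} \bigl(2 - w(\gamma') - w(\gamma')^{-1}\bigr) \\
&= \sum_C \prod_{e \in C} c(e) \prod_{\gamma \subset C} \bigl(2 - w(\gamma) - w(\gamma)^{-1}\bigr) = \det\Delta_G,
\end{align*}
where the first equality is Theorem \ref{CRSFs=detLap} applied to $G'$, and the second uses the bijection $C' = C^*$ together with $\prod_e c(e) \cdot \prod_{e \notin C} c(e)^{-1} = \prod_{e \in C} c(e)$. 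The cycle factors match because the flat line bundle on $\T$ pulls back to both $G$ and $G'$, so the monodromy around a loop depends only on its homology class in $H_1(\T,\Z)$; moreover the factor $2 - w - w^{-1}$ is invariant under $w \mapsto w^{-1}$, absorbing the sign ambiguity in the homology identification.

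The main obstacle is the topological duality claim, namely that $C^*$ is a CRSF of $G'$ with matching cycle homologies. I would approach this by cutting $\T$ along the non-contractible cycles of $C$: the resulting surface-with-boundary decomposes into pieces, which are either disks or annular neighborhoods of families of parallel cycles of $C$, and the dual edges of $C^*$ live naturally in these pieces. Each annular piece contributes a dual cycle in $G'$ parallel to, and hence homologous to, the bounding cycle of $C$, while the acyclic tree parts of each component of $C$ dualize to tree pieces attached to these cycles. Made precise, this surface-topology analysis yields both the CRSF structure of $C^*$ and the homology matching; equivalently, the lemma can be phrased via matroid duality for graphs on surfaces, where the CRSFs of $G$ and $G'$ are identified via the standard bond/cycle-matroid involution.
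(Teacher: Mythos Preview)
Your proposal is correct and follows essentially the same approach as the paper: both expand via Theorem~\ref{CRSFs=detLap} and use the complementary-dual bijection $C\mapsto C^*$ between CRSFs of $G$ and CRSFs of $G'$, matching cycle monodromy factors by homology and edge weights via $\prod_e c(e)\cdot\prod_{e\notin C}c(e)^{-1}=\prod_{e\in C}c(e)$. The paper's proof simply asserts the existence and homology-matching of the dual CRSF without further justification, whereas you sketch the surface-topology argument; your extra care with the sign ambiguity (absorbed by the symmetry of $2-w-w^{-1}$) is a refinement the paper omits.
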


In particular a resistor network on a torus is electrically equivalent to its dual network.

\begin{proof}
Use Theorem \ref{CRSFs=detLap} and
note that, given a CRSF $C$ on $G$, there is a 
dual CRSF $C'$ on $G'$
consisting of the set of edges of
$G'$ which are not crossed by an edge of $C$.  It has the same homology class
as $C$, and thus the same weight $\prod_{\text{cycles}}2-w-1/w$, and the contribution from the edge weights is $\prod_{e\in E}c(e)^{-1}$ times the product of the edge
weights of $C$.
\end{proof}

\subsubsection{Zig-zag paths for a graph $G$ on a torus.} 
A {\it zig-zag path} on a surface graph $G$ is a path which turns maximally
left or right at each vertex, and 
the left/right turns alternate. A zig-zag path on $G$ is unoriented. 
The collection $\{\alpha\}$ of zig-zag paths assigned to a graph $G$ on a torus 
has the following property: 

\begin{lemma} \la{evencrossings}
Each zig-zag path intersects an even number of zig-zag paths (counted with multiplicity). 
\end{lemma}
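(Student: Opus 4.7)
The plan is to reduce the claim to a parity computation via the medial-graph picture of zig-zag paths. Each edge $e$ of $G$ corresponds to a $4$-valent vertex $m_e$ of the medial graph, whose four incident edges split into two ``opposite'' pairs; a zig-zag path passes through $m_e$ by using one such pair and going ``straight'', so exactly two strand pieces traverse $m_e$ and they cross transversally there. In particular, each edge of $G$ contributes exactly one intersection point among the zig-zag paths.

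Now fix a zig-zag path $\alpha$ and let $\ell(\alpha)$ denote its length, i.e., the total number of edges of $G$ traversed by $\alpha$, counted with multiplicity (an edge may be used by both strand pieces of $\alpha$). Let $s(\alpha)$ be the number of medial vertices at which both strand pieces belong to $\alpha$, and let $c(\alpha)$ be the total number of intersection points of $\alpha$ with the other zig-zag paths, counted with multiplicity. Each medial vertex of the first type is traversed twice by $\alpha$, and each of the second type once, so
\[
\ell(\alpha) = 2\,s(\alpha) + c(\alpha),
\]
and consequently $c(\alpha) \equiv \ell(\alpha) \pmod{2}$. The claim therefore reduces to showing that $\ell(\alpha)$ is even.

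For this, choose any orientation of $\alpha$ and list its edges cyclically as $e_1, e_2, \ldots, e_n$. By definition, consecutive edges turn alternately maximally left and maximally right at the shared vertex. For the $L/R$ pattern to close up consistently around the cycle, the turn between $e_n$ and $e_1$ must be opposite in type to the turn between $e_1$ and $e_2$, which forces $n$ to be even. Reversing the orientation of $\alpha$ merely interchanges the labels ``left'' and ``right'' globally, so this conclusion is orientation-independent. Hence $\ell(\alpha) = n$ is even and $c(\alpha)$ is even.

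The subtlety that deserves care is matching the ``straight-through'' pairing of opposite edges at each medial vertex with the alternating left/right rule defining zig-zag paths on a general (possibly non-bipartite) surface graph $G$; one must verify both that the two pairs at $m_e$ correspond exactly to the two zig-zag strand pieces through $e$, and that two such pieces through a common $m_e$ really cross transversally rather than merely touch. Once this correspondence and transversality are in place, the accounting above together with the evenness of $\ell(\alpha)$ yields the lemma without further computation.
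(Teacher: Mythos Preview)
Your proof is correct and follows essentially the same approach as the paper's: both argue that a zig-zag path has even length because the alternating left/right turn pattern must close up consistently, and that each edge of $G$ hosts exactly one crossing of zig-zag strands. Your version is somewhat more careful in that you explicitly separate self-crossings via the identity $\ell(\alpha)=2s(\alpha)+c(\alpha)$, whereas the paper's two-line proof simply asserts that at each step the path ``crosses a single other zig-zag path'' without distinguishing the self-intersecting case.
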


\begin{proof}
Each vertex of a zig-zag path is either on the left or right of the corresponding
path in the medial graph, so a zig-zag path has an even number of steps. At each
step it crosses a single other zig-zag path.
\end{proof}

Conversely, any collection of smooth unoriented 
loops $\{\alpha\}$ on a torus, in general position, 
satisfying Lemma \ref{evencrossings} and considered up to isotopy,  
arises as the set of medial strands of a graph on the torus. Namely, one can
color the complementary domains black and white 
so that no domains of the same color share an edge. 
We declare the black domains to be the vertices of $G$, and connect two 
vertices of $G$ by an edge if the corresponding domains are separated by an 
intersection point of two zig-zag paths. 
The white domains are the faces of $G$.

\subsubsection{Elementary transformations.} 

Elementary transformations are local rearrangements of a resistor 
network which preserve
electrical equivalence. There are four types: $Y-\Delta$ transformations,
parallel and series reductions, and removal of dead branches. The $Y-\Delta$ transformations 
appeared first in  1899 in  the work of Kennelly \cite{Kennelly}

Recall that a Y-$\Delta$-{\it transformation} of a resistor network is given by replacing 
a triangle $\Delta$ with conductances $\alpha, \beta, \gamma$ by the graph $Y$ 
with conductances $a,b,c$, see Figure \ref{di34}, 
\begin{figure}[ht]
\epsfxsize180pt
\centerline{\epsfbox{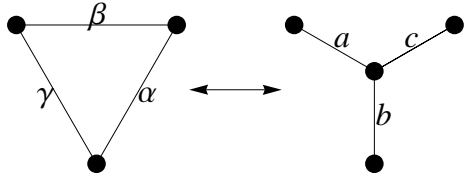}}
\caption{An example of a  Y-$\Delta$-transformation.\label{di34}}
\end{figure}
related as follows: 
\be \la{leiden1}
\alpha = \frac{bc}{a+b+c}, \quad \beta = \frac{ac}{a+b+c}, \quad \gamma = \frac{ab}{a+b+c}.
\ee
\be \la{leiden2}
a= \frac{\alpha\beta + \beta\gamma+\gamma\alpha}{\alpha}, \quad 
b= \frac{\alpha\beta + \beta\gamma+\gamma\alpha}{\beta}, \quad 
c= \frac{\alpha\beta + \beta\gamma+\gamma\alpha}{\gamma}, \quad 
\ee
The graph transformation induced by 
moving a zig-zag strand across a 
crossing of two other zig-zag strands is 
a Y-$\Delta$ move. 

Two more elementary transformations of graphs with conductances, called 
{\it parallel and series reductions}, are shown in 
Figure \ref{di35}.
\begin{figure}[ht]
\epsfxsize320pt
\centerline{\epsfbox{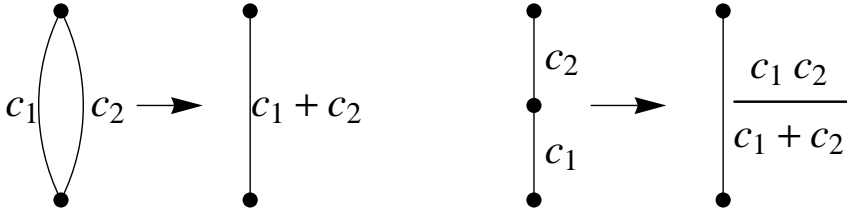}}
\caption{Parallel and series reductions.\label{di35}}
\end{figure}

A last elementary transformation is \emph{removing a dead branch},
which consist of contracting to a vertex any subgraph which is attached
to the rest of the graph by a single vertex. 

A torus graph is said to be \emph{minimal} if the lifts of any zig-zag path to the plane (the 
universal cover of the torus) does not intersect itself, and the lift
of two zig-zag paths to the plane intersect
at most once. 

\begin{theorem}\label{KWproof} 
Every torus graph $G$ with conductances $c$ can be reduced to a minimal
torus graph by a sequence of elementary transformations. 

Two minimal torus graphs $G_1,G_2$ are 
topologically equivalent if and only if $G_2$ is related
to $G_1$ or to its dual graph $G_1'$ by Y-$\Delta$-moves. 
\end{theorem}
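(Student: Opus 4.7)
The plan is to prove both statements by reducing to the bipartite setting via the Temperley construction $G \mapsto \Gamma_G$ described in Section 1.3, thereby leveraging Theorems \ref{DT} and \ref{3.21.10.1} which have already been proved for bipartite graphs. Three structural observations make this reduction possible. First, the Temperley correspondence is self-dual: $\Gamma_G = \Gamma_{G'}$, since the black vertices of $\Gamma_G$ split canonically into original vertices and face centers, and exchanging them does not alter the bipartite graph. This accounts for the appearance of the dual graph in the statement. Second, zig-zag paths of $G$ are in natural bijection with zig-zag paths of $\Gamma_G$: a zig-zag on $\Gamma_G$ passes straight through every $2$-valent white vertex (the midpoint of an edge of $G$) and traces out a sequence on $G$ that alternates maximally-left and maximally-right turns at consecutive vertices. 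Third, the Newton polygon of $\det\Delta_G$ coincides (up to translation) with the Newton polygon of the dimer partition function of $\Gamma_G$; this follows from Theorem \ref{CRSFs=detLap} together with the Temperley bijection between CRSFs of $G$ and dimer covers of $\Gamma_G$ and an application of Theorem \ref{3.21.10.2}. Consequently, $G$ is minimal in the sense of this section exactly when $\Gamma_G$ is minimal as a bipartite surface graph, and $G_1, G_2$ are topologically equivalent precisely when $\Gamma_{G_1}$ and $\Gamma_{G_2}$ have the same Newton polygon.

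For part (i), I would argue by induction on a non-negative integer complexity, for instance the number of edges of the medial graph (equivalently of $\Gamma_G$). If $G$ is not minimal, then in the universal cover $\widetilde G$ some zig-zag strand either self-intersects or two strands form a parallel bigon; choose an innermost such configuration. Inside the corresponding monogon or bigon the enclosed strands form a triple-crossing diagram in a disk, and Theorem \ref{DT} supplies a sequence of $2 \leftrightarrow 2$ moves reducing it. Translating these local moves back to $G$ yields either a Y-$\Delta$-move, a series or parallel reduction, or the removal of a dead branch, each of which strictly decreases the complexity. Termination is then automatic.

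For part (ii), assume $G_1, G_2$ are minimal and topologically equivalent. Then $\Gamma_{G_1}$ and $\Gamma_{G_2}$ are minimal bipartite graphs on $\T$ with the same Newton polygon, so by Theorem \ref{3.21.10.1} they are related by a sequence of spider moves and $2$-valent vertex expansions/contractions. I would then show, using the forthcoming Lemma \ref{6.1.11.1}, that a single Y-$\Delta$-move on a graph $H$ lifts to a specific composition of four spider moves on $\Gamma_H$, and conversely that any Temperley-to-Temperley sequence of spider moves can be factored into such four-move blocks (possibly after applying a duality exchange $H \leftrightarrow H'$, which is invisible to $\Gamma_H$). Chaining these factorizations along the spider-move sequence relating $\Gamma_{G_1}$ to $\Gamma_{G_2}$ yields the required sequence of Y-$\Delta$ moves connecting $G_1$ either to $G_2$ or to $G_2'$.

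The main obstacle is the factorization step in part (ii): an arbitrary spider move on $\Gamma_G$ generally produces a bipartite graph that is not of Temperley form $\Gamma_H$ for any resistor network $H$, so the intermediate stages in the sequence provided by Theorem \ref{3.21.10.1} may lie outside the Temperley locus. Resolving this requires analyzing commutation relations between spider moves so that one can reorder the sequence to re-enter the Temperley locus after every block of four moves, and the self-duality $\Gamma_H = \Gamma_{H'}$ furnishes exactly the slack needed to accommodate the dual-graph ambiguity in the statement.
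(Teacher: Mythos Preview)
Your route through the Temperley construction $G\mapsto\Gamma_G$ is genuinely different from what the paper does, and the detour creates exactly the obstacle you identify at the end. The paper never passes to $\Gamma_G$ for this theorem. It works directly with the medial strands of $G$ itself, which form an ordinary (double-crossing) curve configuration on the torus. For part (i) it isotopes each strand to a straight geodesic through a generic one-parameter family; the singular times of the isotopy are Reidemeister-type events, and each corresponds to one of the elementary transformations on $G$ (a strand passing through a crossing is a $Y$--$\Delta$ move, a bigon collapsing is a series or parallel reduction, a monogon collapsing is a dead-branch removal). For part (ii) it observes, via Lemma~\ref{VIL}, that topologically equivalent minimal graphs have the same geometric Newton polygon, hence the same multiset of strand homology classes; the two strand configurations are then isotopic on $\T$, and that isotopy is realized by $Y$--$\Delta$ moves. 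The dual-graph ambiguity arises because an unoriented strand configuration determines $G$ only up to the black/white coloring of complementary regions.

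The specific problems with your plan: in part (i) you invoke Theorem~\ref{DT} inside an innermost monogon or bigon, but Theorem~\ref{DT} is about \emph{triple}-crossing diagrams, whereas the medial strands of $G$ meet in ordinary double points; so that theorem does not apply as stated, and if you first pass to $\Gamma_G$ to get a triple-crossing picture you immediately inherit the factorization problem. In part (ii) the factorization step is a real gap: Lemma~\ref{6.1.11.1} shows a $Y$--$\Delta$ move is a particular composition of four spider moves, but the converse---that an arbitrary spider-move path from one Temperley graph to another can be reorganized into such blocks---is not established anywhere and is not obviously true, since intermediate bipartite graphs need not be of Temperley type. The paper's direct strand argument sidesteps both issues entirely: one medial-strand crossing event equals one $Y$--$\Delta$ move, with nothing to factor.
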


We prove this at the end of Section \ref{zigzagNewton}.
The corresponding statement for electrical equivalence is false:
there are minimal graphs which are electrically equivalent but not related
by $Y-\Delta$ moves. In fact the electrical equivalence class
is a subvariety of the Hamiltonian dynamical system;
see below.

\subsection{The associated dimer model on a torus}

\subsubsection{The bipartite graph assigned to a graph on a torus.} 
Drawing the graph  $G$ and its dual graph $G'$ simultaneously one 
can make a bipartite graph $\Gamma_G$ on the torus whose white vertices
are the edges of $G$, black vertices are the vertices and the faces of $G$,
with an edge for each adjacency, see Figure \ref{di36}. 
The graph $\Gamma_G$ has the same number of white and black vertices 
since Euler's formula on a torus reads $E=V+F$.  
One can show \cite{KPW} that the graph $\Gamma_G$ admits dimer covers:
in fact these are in bijection with CRSFs in which both the cycles and dual cycles
(the dual of a CRSF is also a CRSF) 
have a chosen orientation.

\subsubsection{The resistor network subvariety.} 
Let $c$ be a conductance function on the edges of $G$, 
considered up to a non-zero scalar. 
It gives rise to a line bundle with connection $V(c)$ on $\Gamma_G$, 
defined as follows. 
The fibers of $V(c)$ at the vertices of $\Gamma_G$ are identified with $\C$. 
So the connection is defined via the edge weights assigned to the oriented edges 
of $\Gamma_G$. The weight of an edge incident to a vertex corresponding to a face of $G$ is $1$. 
The weight of an edge going from a vertex of $G$ to a midpoint of an edge $E$ 
of $G$ is the conductance $c_E$ assigned to that edge. 

The 
monodromies around a pair of  
generators of $H_1(\T,\Z)$ provide an isomorphism of the 
moduli space of line bundles with flat connections on $\Gamma_G$ with 
$(\C^*)^2$.  
Taking the tensor product of $V(c)$ with a line bundle with a flat connection on 
$\Gamma_G$ we 
get a line bundle with connection on $\Gamma_G$. 
We denote by ${\mathcal R}_{G}$ the moduli space of line bundles with connections on the graph $\Gamma_G$ 
constructed in this way, and call it {\it the resistor network subvariety}. 
It is a closed subvariety
$$
{\mathcal R}_{G}\subset {\L}_{\Gamma_G}, \qquad {\rm dim}{\mathcal R}_{G} = E_G+1
$$
where $E_G$ is the number of edges of $G$ 
(note that multiplying all conductances on $G$
by a constant gives a gauge equivalent connection on $\Gamma_G$).

\subsubsection{Determinant of the Laplacian  and the dimer model partition function} \la{6.4}

Using the conductances on $G$ to define the connection on $\Gamma_G$,
we have the following relation between the dimer partition function and
$\det\Delta$. 

\begin{lemma} \la{KL}
The partition function for the dimer model on 
$\Gamma_G$ is equal to ${\rm det}\Delta_G$ (up to a monomial
factor in $z_1,z_2$). 
\end{lemma}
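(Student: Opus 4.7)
The plan is to combine the generalized Temperley bijection of Kenyon--Propp--Wilson~\cite{KPW} with the already-stated Theorem~\ref{CRSFs=detLap}. That bijection identifies dimer covers of $\Gamma_G$ with pairs of dual oriented cycle-rooted spanning forests on $G$ and $G'$, after which the lemma reduces to checking that the signed weights under the connection $V(c) \otimes L_{z_1,z_2}$ match the weights appearing in Theorem~\ref{CRSFs=detLap} up to a monomial in $z_1,z_2$.

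Concretely, given a dimer cover $M$, each vertex $v\in G$ (viewed as a black vertex of $\Gamma_G$) is matched to a unique white vertex $w_e$ with $e\ni v$; orienting $e$ away from $v$ and repeating over all vertices of $G$ yields a subgraph in which every vertex has out-degree one, hence an oriented CRSF $\vec C(M)$ on $G$. The face-matchings analogously yield an oriented CRSF $\vec C'(M)$ on $G'$, and the edges of $C$ together with the duals of the edges of $C'$ partition $E(G)$. This is the bijection of~\cite{KPW}; every unoriented pair $(C,C')$ of dual CRSFs is hit by $2^{k+k'}$ dimer covers, where $k,k'$ are the numbers of cycles of $C,C'$. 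Because edges of $\Gamma_G$ incident to vertices of $G$ carry weight $c_e$ while those incident to faces carry weight $1$, the weight of $M$ equals $\prod_{e\in C(M)} c_e$ times a holonomy in $z_1,z_2$ encoding the homology classes of the oriented cycles of $\vec C(M)$ and $\vec C'(M)$.

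I would then sum over all dimer covers, grouping by the underlying unoriented $C$, and use the Kasteleyn sign from Theorem~\ref{detK} (which depends only on the mod-$2$ homology class of $M$) to organize the orientation sum. A key feature of the toroidal Temperley construction is that the cycles of $C$ pair off canonically with cycles of the dual $C'$ having identical homology classes on $\T$. Writing $w_i := w(\gamma_i)$ for the monodromy of the flat connection around the $i$-th cycle, the signed sum over the $2^{k+k'}$ orientation choices factors, up to a global monomial $m(z_1,z_2)$ independent of $C$, as
\[
\prod_{i=1}^{k} (1-w_i)(1-w_i^{-1}) \;=\; \prod_{i=1}^{k}\bigl(2 - w_i - w_i^{-1}\bigr).
\]
Comparing with Theorem~\ref{CRSFs=detLap} then identifies $\det K_{\Gamma_G}$ with $m(z_1,z_2)\cdot\det\Delta_G$, proving the lemma.

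The main obstacle is the sign bookkeeping. One must verify that the Kasteleyn signs of the $2^{k+k'}$ dimer covers associated to a fixed unoriented pair $(C,C')$ combine so that the sum over orientations of a primal cycle $\gamma$ contributes the factor $(1-w(\gamma))$ and the sum over orientations of the dual cycle contributes $(1-w(\gamma)^{-1})$, rather than, say, $(1+w)(1+w^{-1})$ or $(w-w^{-1})^2$. This is a local calculation around each cycle of the CRSF and its dual, and is precisely the technical content of~\cite{KPW} which we invoke here; it can alternatively be verified directly using explicit Kasteleyn orientations on $\Gamma_G$ near each cycle, reducing to the observation that reversing the orientation of a single cycle of $\vec C$ or $\vec C'$ flips the Kasteleyn sign by $-1$.
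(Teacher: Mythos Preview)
Your approach is correct in outline and genuinely different from the paper's. The paper does not use the Temperley bijection at the level of configurations; instead it proves a matrix identity. One chooses two gauges $K_1,K_2$ for the Kasteleyn matrix of $\Gamma_G$ (coming from a Euclidean embedding, with edge weights given by unit complex directions times conductances) and verifies entry by entry that
\[
-K_2^*K_1=\begin{pmatrix}\Delta_G&0\\0&\Delta_{G'}\end{pmatrix}.
\]
Taking determinants and invoking the already-established relation $\bigl(\prod_e c(e)\bigr)\det\Delta_{G'}=\det\Delta_G$ (equation~(\ref{Delta=Delta})) immediately gives $(\det K)^2=\det\Delta_G\det\Delta_{G'}$, hence $\det K=\det\Delta_G$ up to a monomial. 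This route completely bypasses the sign bookkeeping that you correctly identify as the obstacle in the bijective approach: the Kasteleyn signs are absorbed into the choice of complex gauge, and the four-case verification of the matrix identity is elementary.

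Your combinatorial argument buys a more transparent term-by-term correspondence with Theorem~\ref{CRSFs=detLap}, but the price is exactly the orientation/sign analysis you flag. Two points to tighten if you pursue it: first, the claim that primal and dual cycles pair off with $k=k'$ on the torus is correct (the $k$ primal cycles cut $\T$ into $k$ annuli, each containing exactly one dual cycle), and the paper uses the fact that $C$ and $C'$ have the same homology class in the proof of (\ref{Delta=Delta}); second, to get $(1-w)(1-w^{-1})$ rather than $(1-w)^2$ you need the primal and dual cycle in each annulus to enter with opposite monodromy conventions, which comes out of how flipping a dual-cycle orientation shifts $[M]$ by $-[\gamma]$ rather than $+[\gamma]$. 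That is indeed a local check, but it is not literally in \cite{KPW} (which treats the planar case), so you should either supply it or point to a toroidal reference.
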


\begin{proof}  Let $G'$ be the dual graph to $G$ with conductance $1/c$ on an edge
dual to an edge of conductance $c$. Recall Lemma \ref{Delta=Delta} relating the
Laplacian determinants on $G$ and $G'$.

These two operators $\Delta_G,\Delta_{G'}$ acting on the black vertices of 
$\Gamma_G$ are related to the Kasteleyn matrix $K$ as follows:
\be\label{Delta=KK}
\left(\begin{matrix}\Delta_G&0\\0&\Delta_{G'}\end{matrix}\right)=-K_2^*K_1,\ee
where $K_1,K_2$ are two copies of the black-to-white 
Kasteleyn matrix with appropriate gauge, defined as follows.

Embed $G$ in a Euclidean torus with straight edges; this allows us to associate
to each directed edge $b_1b_2$
a unit-modulus complex number $\zeta(b_1,b_2)$ representing its direction. Letting
$w$ be the vertex of $\Gamma_G$ in the center of this edge we then
define $K_1(b_1,w)=c(b_1,b_2)\zeta(b_1,b_2)$ where $c$ is the conductance of edge $b_1b_2$.
If $b_3$ is a black vertex of $G'$ in a face of $G$ adjacent to edge 
$b_1b_2$ then $K_1(b_3,w)$ is
$1$ times the unit modulus complex number in the direction from $b_3$ to $w$
and perpendicular to $\zeta(b_1,b_2)$. One can now check that the monodromy
around a face of $\Gamma_G$ is negative real (for positive conductances)
and thus $K_1$ is a Kasteleyn matrix for $\Gamma_G$. One defines $K_2$
similarly but for the dual graph $G'$. 
Now (\ref{Delta=KK}) follows by inspection of the various cases: 
$-K_2^*K_1(b,b)$ is the sum of the
conductances of edges emanating from $b$; 
$-K_2^*K_1(b,b')=0$ if $b\in G$ and $b'\in G'$
or vice versa; $-K_2^*K_1(b_1,b_2)=-c$ if $b_1,b_2$ are adjacent vertices of $G$ 
and the edge connecting them has conductance $c$; similarly for vertices adjacent
in $G'$.

Finally, $\det K_2,\det K_1$ are both the dimer partition function 
(up to constants which depend on our choice of gauge). Thus from 
(\ref{Delta=Delta}) and (\ref{Delta=KK}) we have
$\det\Delta=\det K_1$ up to multiplicative factor which is a monomial in 
$z_1,z_2$.
\end{proof} 

It follows from (\ref{zs}) that the Newton polygon of 
the dimer model on $\Gamma_G$ is centrally symmetric with respect to the origin.

\subsubsection{Zig-zag paths and the Newton polygon}\label{zigzagNewton}

Zig-zag paths are defined for both a graph $G$ on a torus 
and the associated bipartite graph $\Gamma_G$; zig-zag paths 
on $G$ are unoriented. Zig-zag paths for $\Gamma_G$ are oriented. 
A zig-zag path $\alpha$ on $G$ corresponds to a pair  
of zig-zag paths on $\Gamma_G$, isotopic to $\alpha$ and equipped with the opposite orientations. 
In this way we get all zig-zag paths on $\Gamma_G$. 

The {\it geometric Newton polygon} $N(G)$ is described as follows. Each zig-zag path 
$\alpha$ on $G$ gives rise to a pair of homology classes  $\pm [\alpha] \in H_1(\T, \Z)$, where   
 $[\alpha]$ is the homology class of the path $\alpha$ equipped with an orientation. 
There is a unique, centrally symmetric with respect to the origin, convex 
integral polygon 
$$
N(G) \in H_1(\T, \Z)
$$
with the sides given by the vectors $\pm [\alpha]$ for all zig-zag paths $\alpha$ on $G$. 
Indeed, there is evidently a unique up to a translation
 convex integral polygon in $H_1(\T, \Z)$ with these sides, which is 
centrally symmetric. Thanks to Lemma \ref{evencrossings}, 
it can be centered at the origin.

Clearly the polygon $N(G)$ coincides with the geometric Newton polygon $N(\Gamma_G)$ 
assigned in a similar way, using the homology classes of zig-zag paths, to the bipartite graph $\Gamma_G$.   
Thus Lemma \ref{KL} implies
\bl\la{VIL}
Let $G$ be a minimal graph on a torus. Then the geometric Newton polygon $N(G)$ coincides with the Newton 
polygon of  ${\rm det}\Delta_G$.  
\el

\noindent{\bf Proof of Theorem \ref{KWproof}.} 
By Lemma \ref{VIL}, two equivalent minimal graphs $G$ and $G'$ have the 
same geometric Newton polygons, 
whose boundary sides determine the homology classes of the zig-zag strands. 
Thus the collections of their zig-zag strands are isotopic. 
The isotopy is realized by moving zig-zag strands through zig-zag 
strand crossings, and thus 
the corresponding graphs are related by a sequence of Y-$\Delta$ moves. 
 
To reduce a graph $G$ to a minimal graph, consider the zig-zag strands for $G$,
and isotope each to a straight geodesic through a generic isotopy. Each
type of singularity which resolves corresponds to an elementary transformation:
if it is a parallel, series, or dead branch move, it may change the topology
of the set of strands, but also strictly reduces the number of crossings of strands. 
Eventually one arrives at a graph with a minimal number of strand crossings and
the remaining moves are $Y-\Delta$ moves. 
\hfill$\square$

\subsubsection{Equations defining the resistor network subvariety.} 
Denote by $V_G$, $E_G$, $F_G$ the number of vertices, 
edges and faces of the graph $G$, and similarly $V_\Gamma$, $E_\Gamma$, $F_\Gamma$ those for $\Gamma_G$.
Then $F_\Gamma = 2E_G$. Indeed, every edge $E$ of $G$ is incident to four faces of $\Gamma_G$, and 
each face of  $\Gamma_G$ is counted twice. So one has 
$$
{\rm dim}{\L}_{\Gamma} = F_\Gamma+1 = 2E_G+1.
$$
So the codimension of the resistor network 
subvariety ${\mathcal R}_{G}$ is $E_G$. 
On the other hand,  a line bundle with connection 
on $\Gamma_G$ which belongs to the subvariety $\mathcal R_G$ satisfies the following conditions: 

\begin{enumerate}\label{vertextrivial}

\item The monodromies  around the faces of the graph $G$ and the graph $G'$ are equal to $1$. 
\item Given a loop $\alpha$ on the graph $G$ and a loop $\alpha'$ on the graph $G'$ 
whose homology classes in $H_1(\T, \Z)$ coincide, the monodromies around 
these loops also coincide.  
\end{enumerate}

\noindent{\bf Remarks.} The first  condition tells that for each black vertex $b$ on $\Gamma_G$, 
the product of the face weights for the faces adjacent to $b$ is equal to $1$. 

Secondly, the second condition implies the first: to see this, alter a 
loop $\alpha$ by adding to it a  loop going around 
a face of the graph $G$, and similarly for $G'$. 

Finally,
given the first condition, 
the second condition imposes only two more equations on the
connection: one for each generator of $H_1(\T,\Z)$, since the first condition already
implies that the monodromies around homologous paths, both in $G$ or both in $G'$, are equal. 
\medskip

Since the product of the monodromies over all faces of $G$, as well as over all
 faces of $G'$, is $1$,
we get $E_G$ equations for the subvariety ${\mathcal R}_{G}\subset {\L}_{\Gamma_G}$: 
$$
(V_G+F_G -2) +2 = E_G.
$$
 These conditions characterize the subvariety 
${\mathcal R}_G$ in $\L_{\Gamma}$: 

\begin{lemma} A line bundle with connection on $\Gamma_G$ arises from a resistor network on $G$ if and 
only if it satisfies the conditions 1 and 2 above.
\end{lemma}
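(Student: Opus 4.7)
The statement is an \emph{if and only if}, so I will handle the two implications separately.

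For the forward direction, given $V = V(c)\otimes L$ with $L$ the pullback of a flat line bundle on $\T$, a direct parallel-transport computation around the relevant loops in $\Gamma_G$ verifies conditions 1 and 2. The boundary loop in $\Gamma_G$ around a face $F$ of $G$ alternates vertices $v_i$ and midpoints $m_{E_i}$; each conductance $c_{E_i}$ appears together with its inverse, so $V(c)$ contributes $1$, and $L$ contributes $1$ since the loop is contractible in $\T$. Around a face of $G'$ the loop uses only face-center-to-midpoint edges, all of weight $1$ in $V(c)$, and again $L$ contributes $1$. For condition 2, the same cancellation shows that $V(c)$ has trivial monodromy along any loop lying in $G$ or in $G'$, so the monodromy of $V$ along such a loop equals that of $L$ and thus depends only on the image of the loop in $H_1(\T, \Z)$.

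For the reverse direction, the plan is a dimension count in the algebraic torus $\mathcal L_{\Gamma_G} \cong (\C^*)^{2E_G+1}$. Let $\mathcal R'_G \subset \mathcal L_{\Gamma_G}$ denote the locus cut out by conditions 1 and 2. These equations are monomial (each is the vanishing of a character minus $1$), so $\mathcal R'_G$ is a closed algebraic subgroup, hence a subtorus. The count already sketched in the excerpt yields $F_G - 1$ independent equations from faces of $G$ (the product of all $G$-face monodromies is automatically $1$ since the quad boundaries of $\Gamma_G$ sum to $0$ in $H_1(\Gamma_G)$), likewise $V_G - 1$ from faces of $G'$, plus the $2$ equations of condition 2 for a basis of $H_1(\T, \Z)$, giving a total of $V_G + F_G = E_G$ independent equations by Euler's formula on the torus. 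Hence $\dim \mathcal R'_G = E_G + 1$. Separately, $\mathcal R_G$ is the image of a morphism of algebraic tori $(\C^*)^{E_G}/\C^* \times (\C^*)^2 \to \mathcal L_{\Gamma_G}$ sending $(c, \ell) \mapsto V(c) \otimes L_\ell$, and so is itself a subtorus. Injectivity of this parametrization follows from the observation that the weight of the quad face of $\Gamma_G$ at a corner $(v, F)$ of $G$ is the ratio $c_E/c_{E'}$ of conductances of the two edges of $G$ at that corner: connectedness of $G$ then recovers $c$ up to a global scalar from the face weights of $V$, and the flat factor $L_\ell$ is pinned down afterwards by the two monodromies around a basis of $H_1(\T, \Z)$. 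Hence $\dim \mathcal R_G = (E_G - 1) + 2 = E_G + 1$.

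Combining these, the forward direction gives $\mathcal R_G \subseteq \mathcal R'_G$, and both are subtori of $\mathcal L_{\Gamma_G}$ of the same dimension, which forces $\mathcal R_G = \mathcal R'_G$. The main obstacle is the bookkeeping for the independence count: one must verify carefully that the $V_G + F_G$ face-monodromy conditions from 1, together with the $2$ equations of condition 2, have exactly the two evident dependencies (product of all $G$-face monodromies is $1$, and likewise for $G'$) and no hidden multiplicative relations crossing between $G$-faces, $G'$-faces, and the $H_1(\T,\Z)$ monodromies. Once this independence is confirmed, both dimension counts are firm and the subtorus-containment argument closes the proof.
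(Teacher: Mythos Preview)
Your forward direction is fine and matches the paper. The reverse direction takes a genuinely different route from the paper: you argue by a dimension count on subtori, whereas the paper gives a direct construction --- it untwists by the flat line bundle coming from condition 2, trivializes along $G'$, and then reads off the conductances (up to one normalization) as integrals of a curl-free flow on the medial graph of $G$, using condition 1 to guarantee path-independence. The constructive approach immediately produces, for \emph{every} connection satisfying 1 and 2, a resistor network realizing it; no dimension comparison is needed.

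Your approach has a genuine gap. You write that since the defining equations of $\mathcal R'_G$ are monomial, ``$\mathcal R'_G$ is a closed algebraic subgroup, hence a subtorus.'' That inference is false in general: a closed algebraic subgroup of $(\C^*)^n$ can be disconnected (e.g.\ $\mu_2\subset\C^*$). What the monomial equations give you is that $\mathcal R'_G$ is the intersection of kernels of characters, i.e.\ the annihilator of a sublattice $\Lambda\subset H_1(\Gamma_G,\Z)$; this is a torus only when $\Lambda$ is \emph{primitive} (saturated), not merely of the correct rank. Your acknowledged ``main obstacle'' --- checking that the equations have no hidden multiplicative relations --- is about rank (independence over $\Q$), and in fact that part is automatic: since you already showed $\mathcal R_G$ is a genuine $(E_G+1)$-dimensional subtorus contained in $\mathcal R'_G$, the codimension of $\mathcal R'_G$ is at most $E_G$, so your $E_G$ equations must be independent. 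What is \emph{not} automatic, and what your argument never addresses, is that $H_1(\Gamma_G,\Z)/\Lambda$ is torsion-free. Without that, the containment $\mathcal R_G\subseteq\mathcal R'_G$ together with equal dimensions only identifies $\mathcal R_G$ with the identity component of $\mathcal R'_G$; extra components are not ruled out.

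To repair the argument along your lines you would have to prove primitivity of $\Lambda$ directly (a separate lattice computation), or else give a direct construction of the conductances from the connection --- which is exactly what the paper does.
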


\begin{proof}  We have seen already that the line bundle with connection on $\Gamma_G$ 
arising from a resistor network satisfies the conditions 1 and 2. 

Let us prove the converse. 
Notice that the condition 1 says that we have a flat line bundle on the torus $\T$ assigned to the graph $G$, and 
another flat line bundle on the torus assigned to the graph $G'$. 
The condition 2 then says that these two flat line 
bundles are isomorphic. So, untwisting by a flat line bundle on the torus, we can assume that 
our line bundle has trivial monodromy around any loop on $G$ and around any loop on $G'$.

Let us trivialize the line bundle on $\Gamma_G$ on  
the dual graph $G'$. Take an edge 
$E_0$ of
$G$ and assign it conductance $1$. 
Recall that the \emph{medial graph} of $G$ is the graph 
with a vertex for every edge of $G$
and an edge whenever two edges of $G$ share a vertex of $G$. The parallel transports
on $\Gamma_G$ determine a curl-free flow on the medial graph of $G$;
the value of the conductance of an edge $E$ is then the integral of this flow along
a path in the medial graph from $E_0$ to $E$. 
So we have reconstructed conductances up to a non-zero common factor. 
\end{proof}

\subsubsection{The resistor network subvariety is Lagrangian.} \la{6.4a}
An irreducible subvariety of a Poisson variety is {\it Lagrangian}  
if the generic part of its intersection 
with the generic symplectic leaves are Lagrangian subvarieties of the symplectic leaves. 
  
\bt \la{LAGRANGIAN}
${\mathcal R}_{G}$ is a Lagrangian  subvariety of the 
Poisson variety  ${\L}_{\Gamma_G}$. 
\et

\begin{proof} The following crucial lemma implies that the intersection 
of ${\mathcal R}_{G}$ with a generic symplectic leaf 
is coisotropic (that is, the symplectic form is zero on this intersection). 

\bl The Poisson brackets between the defining equations 1-2 of the resistor network subvariety ${\mathcal R}_G$ 
are equal to zero. 
\el

\begin{proof}  Let us show first that the monodromies around the black vertices of the graph $\Gamma_G$ 
Poisson commute. Let $b_1, b_2$ be two black vertices of $\Gamma_G$ corresponding
 to the faces $F_1, F_2$ of $G$. Let $W_{b_1}, W_{b_2}$ be the corresponding monodromies. 
If the faces $F_1, F_2$ are not adjacent, the Poisson bracket is obviously zero. 
If they are adjacent, they share an edge $E$, which has, by construction, 
two black vertices at the ends and one white vertex at the middle, see Figure \ref{di36}. Thus 
$\{W_{b_1}, W_{b_2}\}=0$. The argument in the dual case when 
$b_1, b_2$ are two black vertices of $\Gamma_G$ corresponding
 to the vertices of $G$ is the same. Finally, let $b$ be a black vertex assigned to a face $F$ of $G$, 
and $b'$ a black vertex assigned to a vertex $v$ of $G$. Then the face $F$ of $G$ and the face 
$F'$ of the dual graph $G'$ corresponding to $v$ either do not intersect, or intersect at two points. 
See Figure \ref{di36} where the boundary of $F$ is a black loop and the one for $F'$ is a dashed  loop. 
In the former case the Poisson bracket is evidently zero. In the latter case, 
the contributions of the intersection points come with the opposite signs, and thus cancel. 
\end{proof}

\noindent\emph{Continuation of proof of Theorem \ref{LAGRANGIAN}.}
Let $2m$ be the number of Casimirs for the Poisson variety ${\L}_{\Gamma_G}$. 
It equals to the number of primitive boundary intervals 
of the Newton polygon $N(\Gamma_G)$ and thus is an even number, since the polygon is centrally symmetric. 
Since the product of all Casimirs is $1$, 
the dimension of the generic symplectic leaf on ${\L}_{\Gamma_G}$ is 
$$
2E_G+1 - (2m-1) = 2(E_G -m+1). 
$$
The Casimirs are monodromies along the zig-zag paths, 
and the latter on the graph $\Gamma_G$ come in pairs: $\{z_1, z_1'\}$, ..., $\{z_{m}, z_{m}'\}$, so that 
$\{z_k, z_k'\}$ have the opposite homology classes, and thus 
the corresponding Casimirs $C_k, C_k'$ are inverse to each other, providing a relation 
$C_kC_k'=1$. 
Since the product of all Casimirs is $1$, the last relation $C_mC_m'=1$ follows from the 
others, while the first $m-1$ are independent. 

Since  
${\rm dim}{\mathcal R}$ equals to $m-1$ 
plus half of the dimension of the generic symplectic leaf, 
it is sufficient to show that the generic symplectic leaves intersected by 
 ${\mathcal R}$ are parametrized 
by (no less than) $m-1$ parameters. Let us assume that there is a relation 
$C_1^{n_1} \ldots C_m^{n_m}=1$. Let us show that $n_1=\ldots =n_m$. 
Take a pair of Casimirs $C_a$ and $C_b$ 
corresponding to non-parallel sides of the 
Newton polygon. Then the corresponding zig-zag strands intersect. 
So there is an edge $E$ shared by them. Altering the conductance 
at the edge $E$ we 
alter just the Casimirs $C_a$ and $C_b$. This implies that $n_a=n_b$. 
Therefore $n_1=\ldots =n_m$. 
\end{proof}

\subsubsection{Quantized cluster monomial Lagrangian subvarieties}\la{QCMLS}
The subvariety ${\mathcal R}$ is a {\it cluster monomial Lagrangian subvariety} --
it is a Lagrangian
subvariety of a cluster Poisson variety ${\mathcal X}$, which is given in
any  cluster coordinate
systems by equating certain monomials to $1$.

According to the correspondence principle of quantum mechanics,
a symplectic manifold $X$ should give rise to a Hilbert space $H_X$ --
the quantization of $X$. Furthermore,
a Lagrangian subvariety $L \subset X$ should provide a line in $H_X$.
Here is how the story goes in the cluster situation.

The quantization of a cluster Poisson variety ${\mathcal X}$
\cite{FG2} provides
a collection of Gelfand triples, parametrized by the eigenvalues
$\chi$ of the Casimirs:
$$
{\mathcal S}_{\chi} \subset {\mathcal H}_{\chi}\subset {\mathcal S}^*_{\chi}.
$$
Here ${\mathcal H}_{\chi}$ is the  Hilbert space discussed in Section \ref{QS}.
The  {\it modular double algebra}, given by the 
tensor product of quantum algebras at $\hbar$ and $1/\hbar$: 
\be \la{7.7.11.1}
{\mathcal O}_q({\mathcal X}) \otimes {\mathcal O}_{q^{\vee}}({\mathcal
X}), \quad
q = e^{i\pi \hbar}, q^{\vee} = e^{i\pi /\hbar},
\ee
is realized by unbounded selfadjoint operators in the Hilbert space
${\mathcal H}_{\chi}$.
The subspace ${\mathcal S}_{\chi}$
 is the maximal subspace of ${\mathcal H}_{\chi}$
where the algebra (\ref{7.7.11.1})  acts,
equipped with a natural topology. The space
${\mathcal S}_{\chi}^*$ is its topological dual. 
One can show using the
methods of {\it loc. cit.} that
any cluster monomial Lagrangian subvariety ${L}\subset {\mathcal X}$
gives rise to a functional
$$
\psi_{L} \in {\mathcal S}^*_\chi,
$$
defined uniquely up to a scalar  by the condition
that $\psi_{L}$ is annihilated by the quantized monomial equations
in
the algebra ${\mathcal O}_q({\mathcal X})$, and  their
counterparts in
the algebra ${\mathcal O}_{q^{\vee}}({\mathcal X})$.

Here is the basic example.
 Let ${\mathcal H} = L^2(\R)$ be the Hilbert space of functions $f(x)$.
Consider unbounded operators
$$
X f(x):= f(x+i\pi \hbar ),
\quad  Y f(x):= e^{ x}f(x),
$$
$$
X^{\vee} f(x):= f(x+i\pi ),
\quad Y^{\vee} f(x):= e^{ x/\hbar }f(x).
$$
Then
$$
X  Y = q^2Y X, \qquad X^{\vee}  Y^{\vee} = {q^\vee}^2Y^{\vee} X^{\vee}.
$$
So we get a representation of
the modular double of the quantum torus algebra with two generators.
The subspace ${\mathcal S} \subset {\mathcal H}$ consists of functions
$f(x)$ with exponential decay at $x \to \pm \infty$,
whose Fourier transform has the same property.
By the Payley-Wiener theorem $f(x)$ has an analytic continuation to
$\C$.

The equation $X=1$ defines a Lagrangian subvariety $L$ in the torus
$(\C^*)^2$ with coordinates $(X, Y)$.
The quantized equations are $(X -1)f(x+iy)=0$,  $(X^\vee -1)f(x+iy)=0$.
For irrational $\hbar$, they
imply that $f(x+iy) = f(x)$. 
This implies that the
functional
${\psi}$ is a constant.

\subsection{Cluster nature of the resistor network space} 

Recall that all  our spaces are equipped 
with a free action of the two-dimensional torus 
${\rm T}$, and relating them to  cluster 
varieties we have to consider their quotients by this action, or, equivalently, 
consider the action of all transformations on the face weights only. 

In Section \ref{6.3.1} we use $'$ in the notation ${\mathcal X}'_G$ and ${\mathcal R}'_G$ to indicate that, 
just like in Section \ref{clusterpoissonsection}, we work with the quotients  ${\mathcal X}_G/{\rm T}$ and 
${\mathcal R}_G/{\rm T}$. 
Moreover we do not define  the 
spaces ${\mathcal A}_G$ and ${\mathcal B}_G$, only the ``quotients'' 
${\mathcal A}'_G$ and ${\mathcal B}'_G$.

\subsubsection{The basic commutative diagram for a graph $G$} \la{6.3.1}
Given a graph $G$ on a torus, let us define four split algebraic tori (that is, varieties 
$(\C^*)^k$ for some $k$)
$
{\mathcal A}'_G, ~{\mathcal B}'_G, ~{\mathcal R}'_G, ~{\mathcal X}'_G,  
$
fitting into a commutative diagram
\be \la{leiden}
\begin{array}{ccc}
{\mathcal A}'_G & \stackrel{j}{\hookleftarrow}& {\mathcal B}'_G\\
&&\\
p\downarrow &\swarrow s&\downarrow q \\
&&\\
{\mathcal X}'_G &\stackrel{i}{\hookleftarrow}&{\mathcal R}'_G 
\end{array}
\ee

The tori ${\mathcal A}'_G, ~{\mathcal B}'_G, ~{\mathcal R}'_G, ~{\mathcal X}'_G$ 
are defined as the tori with the following canonical coordinates:

\begin{itemize}

\item ${\mathcal A}'_G$:  coordinates $\{A_f\}$ at the faces $f$ of the graph $\Gamma_G$.

\item ${\mathcal B}'_G$:  coordinates $\{B_b\}$ at the black vertices $b$ 
 of the graph $\Gamma_G$.

\item ${\mathcal R}'_G$:  coordinates $\{C_w\}$ at the white vertices $w$ of 
$\Gamma_G$, defined up to a scalar.

\item ${\mathcal X}'_G$:  coordinates $\{X_f\}$ at the faces $f$ of $\Gamma_G$, whose product is $1$.

\end{itemize}

The $C_w$-coordinates are nothing else but the conductances assigned to the edges of the graph $G$, 
which are identified with the white vertices of  $\Gamma_G$.

We define the maps between these tori by their action on the coordinates. Below $\sim$ means an incidence relation. So $b\sim f$ means that a black vertex $b$ 
is incident to a face $f$, etc.

First, the map $p$ is the standard map \cite{FG1}
\be 
p^*X_f:=\prod_{g}A_g^{\varepsilon_{fg}},
\ee
where the product is over faces $g$ of $\Gamma_G$, and $\varepsilon_{fg}$ is the bilinear 
form on the faces defined in Section 2.

Next, 
\be 
j^*{A}_f:= \prod_{b \sim f} B_b = B_{b}B_{b'}.
\ee
Here $b, b'$ are the two black vertices of the face $f$ of the graph $\Gamma_G$. 
\begin{figure}[ht]
\epsfxsize130pt
\centerline{\epsfbox{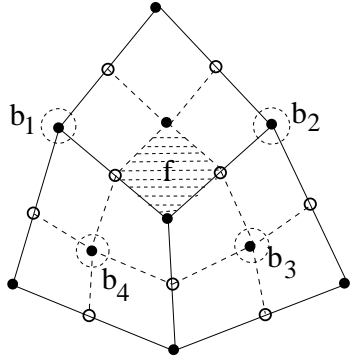}}
\caption{\label{di27}The map $s^*X_f$.}
\end{figure}

Then, we set
 \be \la{5.17.11.2b}
s^*X_f := \frac{{B}_{b_1}{B}_{b_3}}{{B}_{b_2}{B}_{b_4}}.
\ee
 Here $(b_1, b_2, b_3, b_4)$ are the four black vertices which lie outside of the face and 
 incident to the 
two white vertices of the face, as shown on Figure \ref{di27}. 
\begin{figure}[ht]
\epsfxsize80pt
\centerline{\epsfbox{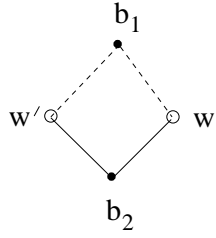}}
\caption{\label{di60}The map $i^*X_f:= {C_{w}}/{C_{w'}}$.}
\end{figure}

Next, 
\be \la{5.17.11.2c}
i^*X_f := \frac{C_{w}}{C_{w'}}.
\ee
Here $w$, $w'$ are the two white vertices of the face $f$, and $w, b, w'$ go counterclockwise around the face, and the black vertex $b$  is a vertex of $f$, Figure \ref{di60}.

Finally, 
\be \la{5.17.11.2d}
q^*C_w := \frac{{B}_{c_1}{B}_{c_3}}{{B}_{c_2}{B}_{c_4}}.
\ee
where $c_1, c_2, c_3, c_4$ are the four black vertices incident to 
the white vertex $w$ as shown on the Figure \ref{di61}. 

\begin{figure}[ht]
\epsfxsize90pt
\centerline{\epsfbox{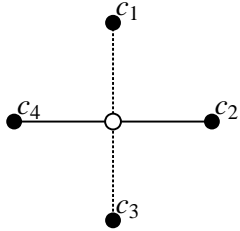}}
\caption{\label{di61}The map $q^*C_w$.}
\end{figure}

Define \cite{FG} a $2$-form on the space ${\mathcal A}'_G$ by 
$$
\Omega_{\mathcal A}:= \sum_{f,g}\varepsilon_{fg}d\log A_f \wedge d\log A_g.
$$

\bp
i) The  diagram (\ref{leiden}) is commutative. 

ii) The map $j$ is an embedding. 
Its image is isotropic for the 2-form $\Omega_{\mathcal A}$.

iii) The 
subvariety $i({\mathcal R}'_G)$ is defined by monomial equations. It is Lagrangian. 

iv) The  subvariety $s({\mathcal B}'_G)$ is Lagrangian in 
the symplectic variety 
$$
{\mathcal U}'_G:= p({\mathcal A}'_G) \subset {\mathcal X}'_G.
$$\ep

\begin{proof} i) One checks this by looking at the  Figure \ref{di27}, \ref{di60}, \ref{di61}. 

ii) Follows easily from the definitions.

iii) The subvariety $i({\mathcal R}'_G)\subset {\mathcal X}'_G$  is given by monomial 
equations
\be\la{leiden4}
\prod_{f:~~ \sum \partial f = \alpha - \beta}X_f =1.
\ee
Here equations are assigned to pairs of loops $\alpha, \beta$ on $G$ which are homologous 
on the torus. The product  is over a set of faces $f$ of $\Gamma_G$ whose total boundary is 
$\alpha - \beta$.
 In particular, taking the empty loop $\beta$, we get equations
 \be \la{leiden5}
\prod_{f\sim b}X_f=1~~~~\mbox{for each black vertex $b$ of the graph $\Gamma_G$.} 
\ee
Indeed, it is easy to see that $i^*(\ref{leiden4})=1$. 
Recall that $E_G$ is the number of the edges of $G$. It is also the number of 
the black as well as the white vertices of $\Gamma_G$. One has 
$$
{\rm dim}~i({\mathcal R}'_G)=E_G-1.
$$Indeed, 
there are $E_G$ independent equations: $E_G-2$ are given by (\ref{leiden5}), and two more 
by  $\beta$'s generating $H_1(\T, \Z)$. 
Since ${\rm dim}~{\mathcal R}'_G=E_G-1$, we proved the first claim. 
The second is similar to the proof of Theorem \ref{LAGRANGIAN}. 

iv) It is similar to the proof of Theorem \ref{LAGRANGIAN}.

\end{proof}

\begin{figure}[ht]
\epsfxsize280pt
\centerline{\epsfbox{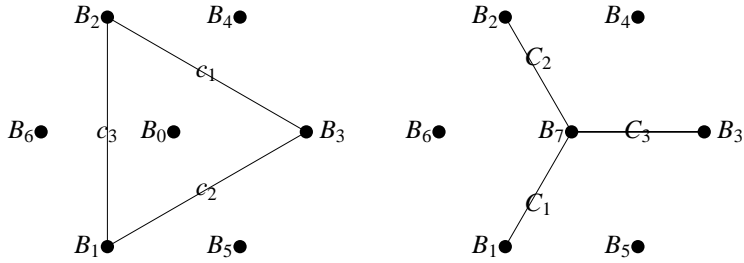}}
\caption{\label{YDeltaBs}Transformation law for $B$'s under a Y-$\Delta$-move.}
\end{figure}

\subsubsection{ Action of the Y-$\Delta$-move on coordinates.} 

A Y-$\Delta$-move acts on the conductances coordinates $C_i$ as in  
(\ref{leiden1})-(\ref{leiden2}). 
Referring to Figure \ref{YDeltaBs},
we have
$$\frac{B_1B_7}{B_5B_6}=C_1=\frac{c_1c_2+c_2c_3+c_3c_1}{c_1}=\frac{\frac{B_2B_3}{B_0B_4}\frac{B_3B_1}{B_0B_5}+\frac{B_3B_1}{B_0B_5}\frac{B_1B_2}{B_0B_6}+\frac{B_1B_2}{B_0B_6}\frac{B_2B_3}{B_0B_4}}{\frac{B_2B_3}{B_0B_4}}$$
from which one obtains
\be \la{6.20.11.1}
B_7=\frac{B_1B_4+B_2B_5+B_3B_6}{B_0}.
\ee
The same equation is obtained starting from the other two conductances $C_2, C_3$.
This equation is the so-called cube recurrence relation, see Section \ref{cuberecsection}
below.

\bl \la{6.1.11.1}
The $Y-\Delta$ move $G \to G'$ is a composition of four cluster mutations. 
Precisely, there are commutative diagrams
$$
\begin{array}{cccccccccc}
{\mathcal A}'_G&\stackrel{j}{\hookleftarrow} &{\mathcal B}'_{G}  
&&&&&{\mathcal X}'_G&\stackrel{i}{\hookleftarrow} &{\mathcal R}'_{G}\\
&&&&&&&&&\\
 \downarrow \mu_{\mathcal A}&&\downarrow &&&&&\downarrow\mu_{\mathcal X} &&\downarrow \\
&&&&&&&&&\\
{\mathcal A}'_{G'}&\stackrel{j}{\hookleftarrow} &{\mathcal B}'_{G'}&&&&& 
{\mathcal X}'_{G'}&\stackrel{i}{\hookleftarrow} &{\mathcal R}'_{G'}
\end{array}
$$
where the down arrows are  
birational maps induced by the  $Y-\Delta$ move action  on the $B$- and $C$-variables, given by  (\ref{6.20.11.1}) and (\ref{leiden1})-(\ref{leiden2}), 
and the right ones 
are compositions of four cluster ${\mathcal A}$- and ${\mathcal X}$-mutations.  
\el

\begin{proof}  i) See Figure \ref{YDfromcluster}.
\begin{figure}[htbp]
\epsfxsize6in
\centerline{\epsfbox{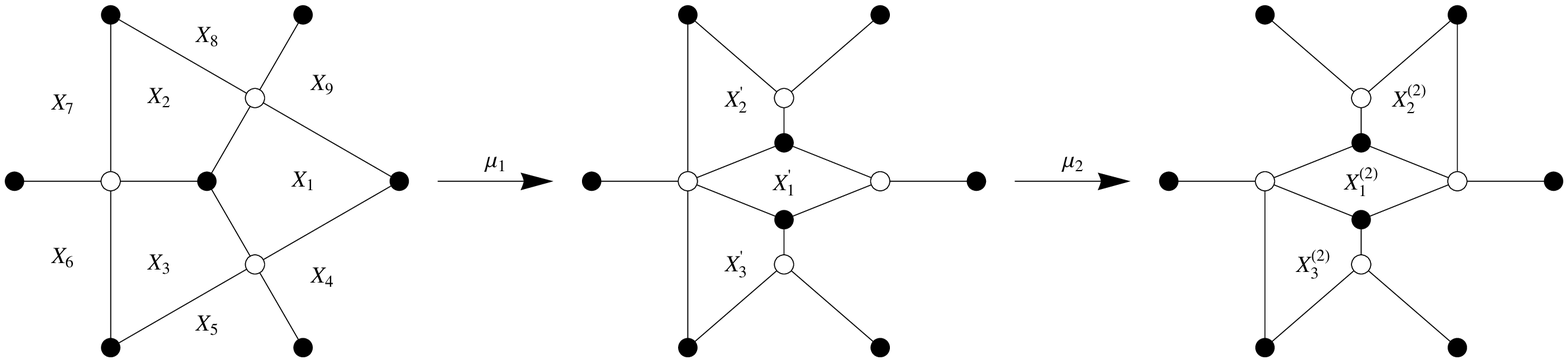}}
\epsfxsize4.8in
\centerline{\epsfbox{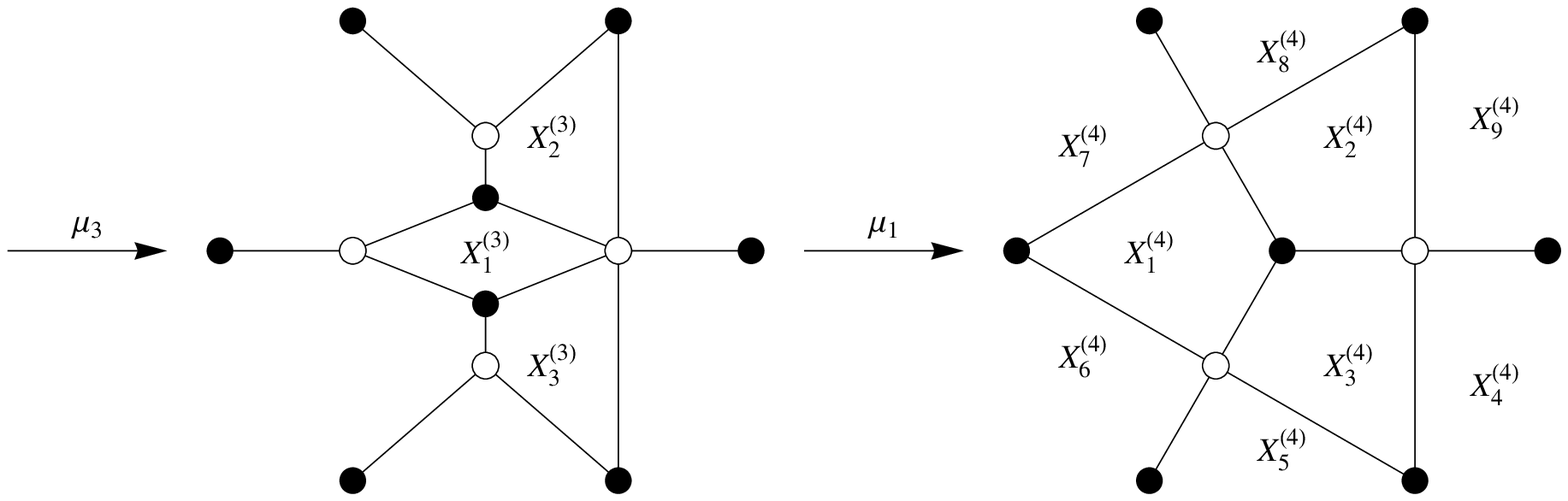}}
\caption{\label{YDfromcluster}The $Y-\Delta$ move from spider moves.}
\end{figure}
After this sequence of $4$ cluster transformations, a little algebra using
Theorem \ref{mutations} shows
that the new $X$ variables
are:
\begin{eqnarray*}X_1^{(4)}&=&\frac{1+X_1+X_1X_3}{X_3(1+X_2+X_1X_2)}\\
X_2^{(4)}&=&\frac{1+X_2+X_1X_2}{X_1(1+X_3+X_2X_3)}\\
X_3^{(4)}&=&\frac{1+X_3+X_2X_3}{X_2(1+X_1+X_1X_3)}\\
X_4^{(4)}&=&X_4(1+X_1+X_1X_3)\\
X_5^{(4)}&=&X_5\frac{X_1X_3}{1+X_1+X_1X_3}\\
X_6^{(4)}&=&X_6(1+X_3+X_2X_3)\\
X_7^{(4)}&=&X_7\frac{X_2X_3}{1+X_3+X_2X_3}\\
X_8^{(4)}&=&X_8(1+X_2+X_1X_2)\\
X_9^{(4)}&=&X_9\frac{X_1X_2}{1+X_2+X_1X_2}.
\end{eqnarray*}

If $X_1X_2X_3=1$, we will also have $X^{(4)}_1X^{(4)}_2X^{(4)}_3=1$.
Moreover if the conductances on the edges of the triangle opposite faces $1,2,3$
are $C_1,C_2,C_3$ respectively then $X_1=C_2/C_3$, $X_2=C_3/C_1,X_3=C_1/C_2$.
One checks that the new conductances are those satisfying (\ref{leiden2}).

ii)  It follows by comparing formulas (\ref{leiden1})-(\ref{leiden2}).
\end{proof}

It follows that the diagram (\ref{leiden}) gives rise to a similar diagram of spaces,  
obtained by gluing the tori assigned to minimal equivalent 
graphs $G$ via the corresponding birational maps 
given either by the Y-$\Delta$-move or the corresponding sequence of four cluster mutations.

\section{Discrete cluster integrable systems}\label{HBDEsection}

\subsection{Discrete cluster integrable systems on a square grid on a torus}

Let us consider the dimer model on a square grid on a 
torus. Such examples are constructed as follows. 
Take the standard square grid bipartite graph in $\Z^2$, 
and map it onto torus by taking the quotient 
by a rank two subgroup of the (bipartite-coloring preserving) translation group 
(that is, $(1,1)\Z+(1,-1)\Z$), see Figure \ref{di0}.

\begin{figure}[ht]
\centerline{\epsfbox{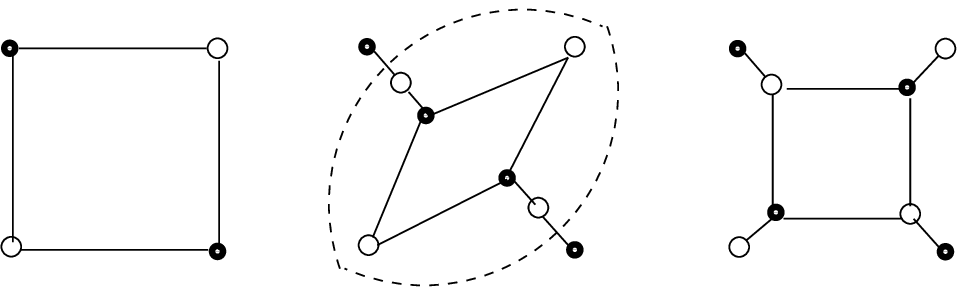}}
\caption{Inserting 
two white $2$-valent vertices, and performing the spider 
move in the dotted region we get a version of the spider 
move on Figure \ref{di2}.}
\label{di70}
\end{figure}

Let us construct an element $A$ of the cluster modular group acting by a non-trivial automorphism 
of the corresponding cluster Poisson variety ${\mathcal X}_N$. 

There is a version of the spider move, see Figure \ref{di2}, given by a composition of two elementary moves on Figure \ref{di70}.

\begin{figure}[htbp]
\centerline{\epsfbox{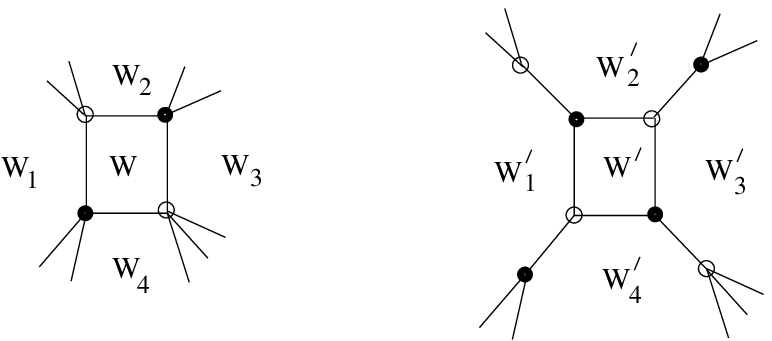}}
\caption{A version of the spider move.\label{di2}}
\end{figure}

The squares of the grid are of two types: one, which we refer to as the 
$\ast$-faces, have the black bottom left vertex. Let us do the Figure \ref{di2} 
version of the 
spider move 
at all $\ast$-faces. The result is illustrated at the right of Figure \ref{di00}. 
Then we shrink the $2$-valent vertices, as shown 
on Figure \ref{di4}. We get again a square grid, 
but now the colors of the vertices flipped, see the bottom of Figure \ref{di00}.  So performing the same sequence 
of moves---the Figure \ref{di2} 
version of the spider moves at the $\ast$-faces and shrinking the $2$-valent 
vertices---we get the original grid. 
The resulting cluster transformation $A$ is non-trivial. 
Thanks to Theorem \ref{mutations} it commutes with the quantum Hamiltonians.

\begin{figure}[htbp]
\centerline{\epsfbox{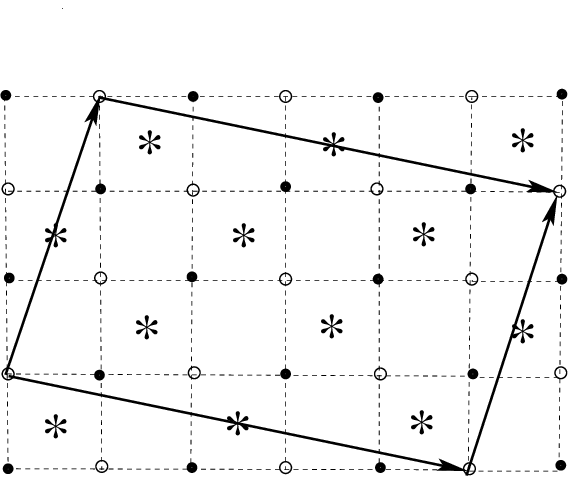}}
\caption{Discrete cluster integrable system on a square grid on a torus.\label{di0}}
\end{figure}

The transformation $A$ has appeared in various guises in the literature.
A version of the transformation $A$ first appeared in \cite{Kor}. 
It was called the \emph{octahedron recurrence} in \cite{Speyer}; 
the relation of the octaherdon reccurence 
with the Hirota bilinear difference equation \cite{Hir} is discussed there. 
A solution of the octaherdon reccurence on a simplex was found in \cite{FG}. 
See also \cite{DFK}, \cite{Naka} for other examples which are special cases.

The cluster transformation $A$ together with the quantum Hamiltonians form a 
discrete quantum integrable system---we think of $A$ as of 
an evolution with a discrete time. 
In the quantized picture \cite{FG2} 
the operator $A$ gives rise to a unitary operator in the Hilbert space of the system 
commuting with the quantum Hamiltonians. It is a composition of 
the quantum dilogarithm intertwiners parametrized by the faces.

\begin{figure}[htbp]
\centerline{\epsfbox{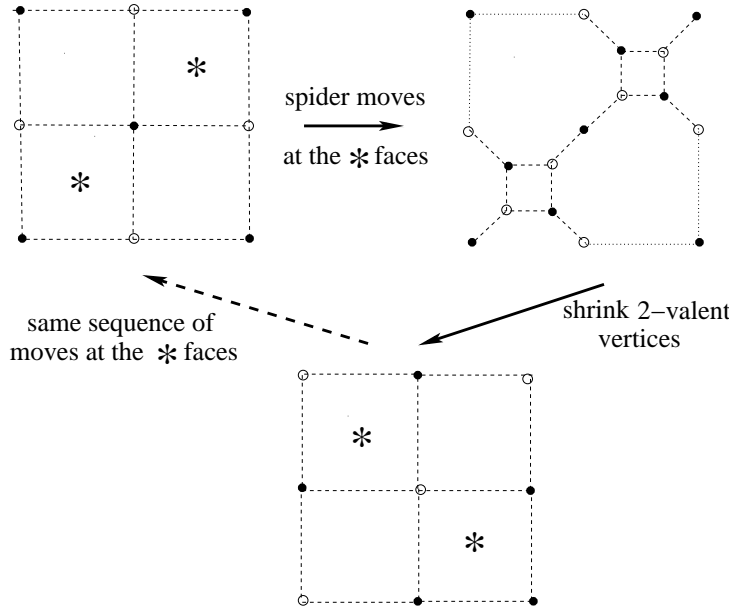}}
\caption{Constructing an element $A$ of the cluster modular group.\label{di00}}
\end{figure}

\subsection{Cluster automorphisms for resistor networks on a torus.} 
\la{resistorautsection}

One also finds cluster automorphisms for cluster varieties associated to
resistor networks on a torus.
The pair $({\cal X}, {\mathcal R})$ assigned to an equivalence class of minimal 
graphs $G$ on a torus 
has a large group ${\mathbb A}$ of cluster automorphisms, 
isomorphic to (a quotient of) $\Z^{s-2}$,  
where $2s$ is the number of sides of the Newton polygon $N(G)$. It is  
constructed as follows. (This material is also found in \cite{HS}.)

For each pair $E, E'$ of centrally symmetric sides  of the Newton polygon $N(G)$ of a minimal graph $G$ on a torus 
there is a collection $\alpha_1, ..., \alpha_k$ of isotopic zig-zag strands 
on $G$ which do not intersect due to minimality of $G$:
their (well defined up to sign) 
homology class in $H_1({\rm T}, \Z)$ is
a primitive vector on $E$.
We assume that they are written in their natural cyclic order provided by the embedding to the torus. 
Isotope them cyclically $(\alpha_1, ..., \alpha_k) \to (\alpha_2, ..., \alpha_k, \alpha_1)$, 
leaving  the other zig-zag paths intact. 
Present this as a sequence of Y-$\Delta$-moves corresponding to the  zig-zag 
strands moving through simple crossings of other zig-zag strands, 
which transforms the graph $G$ to itself.
Take the product of the cluster transformations assigned by Lemma \ref{6.1.11.1} to the Y-$\Delta$-moves.  
We arrive at a cluster transformation, realized by a birational isomorphism 
of the pair $({\cal X}, {\mathcal R})$. It is 
an element of the cluster modular group, as defined in Section 2.4 of \cite{FG1}. The automorphisms assigned to 
different sides commute, and their product is  $1$.

\subsection{The cube recurrence discrete cluster integrable system} \label{cuberecsection}

Let us apply the construction of Section \ref{resistorautsection} to the honeycomb graph on a torus. 
In this case there are three families of parallel zig-zag loops. Moving cyclically 
one of the families we get a cluster automorphism. 
We show that its action on the $B$-variables is given by the 
so-called {\bf cube recurrence} \cite{CS}, 
also known as the {\bf discrete BKP equation} or 
{\bf Hirota-Miwa equation}, which goes 
back to the work of Miwa \cite{Miwa}, see also \cite{BS}.
\medskip

A function $a:\Z^3\to\C$ satisfies \emph{the cube
recurrence} if 
\be\label{cuberecdef}
a_{x+1,y+1,z+1}a_{x,y,z}=a_{x+1,y,z}a_{x,y+1,z+1}+a_{x,y+1,z}a_{x+1,y,z+1}+a_{x,y,z+1}a_{x+1,y+1,z}.
\ee
Note that if $a$ is defined and nonzero for $0\le x+y+z\le 2$ then 
equation (\ref{cuberecdef})
defines it for $x+y+z=3,$ also for $x+y+z=-1$, 
and by iteration for all $(x,y,z)\in\Z^3.$

\begin{figure}[htbp]
\epsfxsize4in
\centerline{\epsfbox{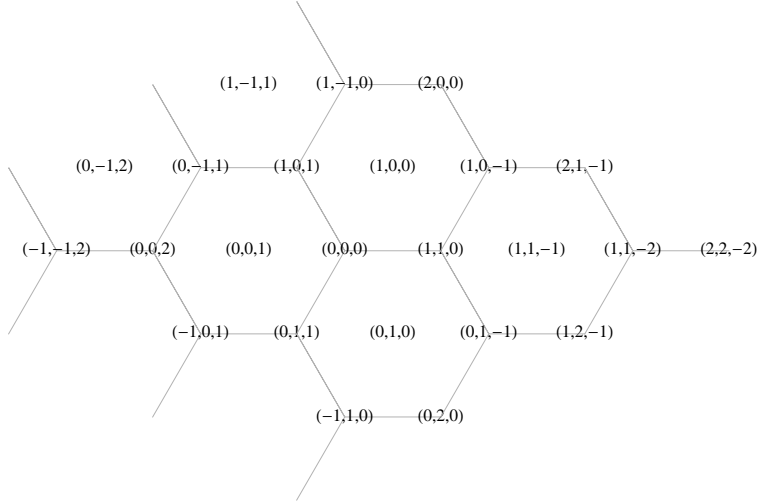}}%
\caption{Vertex  and face labeling for the cube recurrence.\label{cuberecgraph}}
\end{figure}

Let $G$ be the honeycomb graph, thought of as a resistor network.
Vertices of $G$ are the orthogonal projection to the plane $\{x+y+z=0\}$ of the set
$\{(x,y,z)\in\Z^3~|~x+y+z=0\text{ or }2\}$; edges connect vertices whose coordinate sum is $0$ to those nearest neighbors whose coordinate sum is $2$.
We can likewise index faces of $G$ by $(x,y,z)$
satisfying $x+y+z=1$.  See Figure \ref{cuberecgraph}.
Let $V_{k}$ be the set of vertices or faces with $x+y+z=k$.

Assign variables $B_{x,y,z}$ to the vertices and faces of $G$. 

Perform a $Y-\Delta$ move at each vertex $v_{x,y,z}$ where $x+y+z=0$. 
The graph becomes the regular triangulation with vertices at $V_2$,
faces at $V_1$ and new faces at $V_3$. 
Using (\ref{6.20.11.1}), the $B$-value at a new face is obtained from (\ref{cuberecdef}). See Figure \ref{resistoraut}.

Next, perform $\Delta-Y$ moves on all faces in $V_1$. 
The resulting graph is again a honeycomb, with vertices $V_2,V_4$ and faces at
$V_3$. Iterating this procedure we produce variables $B_{x,y,z}$ for
all $(x,y,z)\in\Z^3$ and these variables satisfy the cube recurrence.

Equivalently, we can look at zig-zag strands and move cyclically one of the 
families of parallel zig-zag strands, say the vertical ones on Figure \ref{cuberecgraph}. 
Then each strand will cross two different sets of crossings of other 
two familes, which can be thought of as the vertices of the grid, 
resulting into $Y-\Delta$ and $\Delta-Y$ moves described above. 

By Lemma \ref{6.1.11.1},
 the composition ${\rm B}$ of these $Y-\Delta$ moves and $\Delta-Y$ moves 
is a cluster automorphism, i.e. an element of the cluster modular group, 
 of the dimer model phase space assigned to the graph $G$. 
Therefore it gives rise to a discrete cluster integrable system, with the discrete 
time evolution given by ${\rm B}$.  
Thanks to (\ref{6.20.11.1}), the cube recurrence is obtained by restricting the cluster automorphism ${\rm B}$ 
to the isotropic  subvariety ${\cal B}$.

\begin{figure}[htbp]
\epsfxsize2in
\centerline{\epsfxsize2in\epsfbox{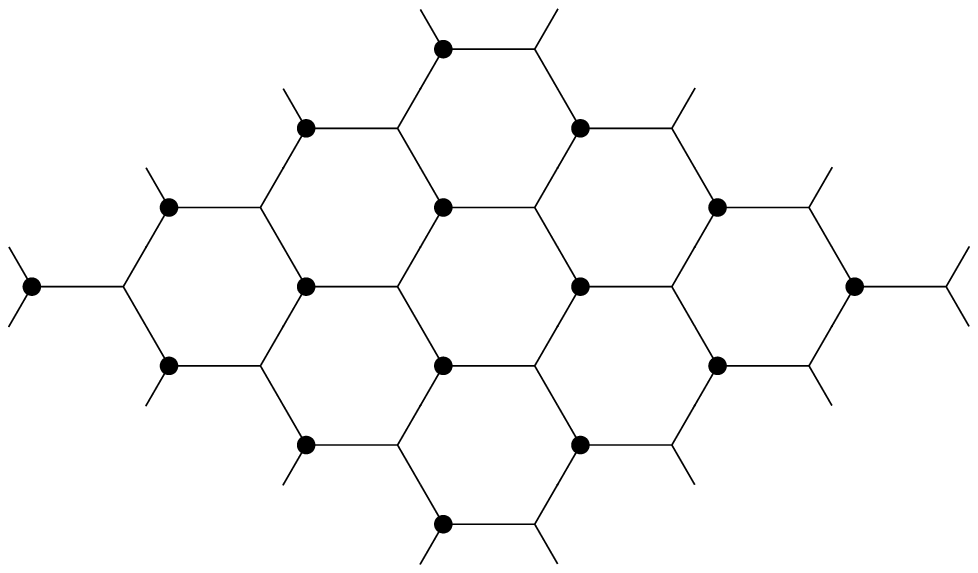}\epsfxsize2in\epsfbox{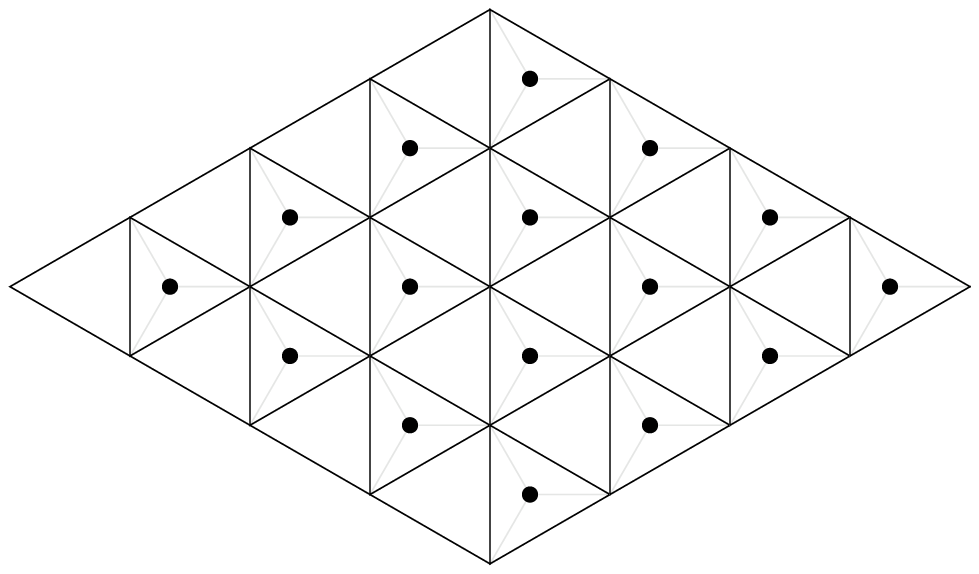}\epsfxsize2in\epsfbox{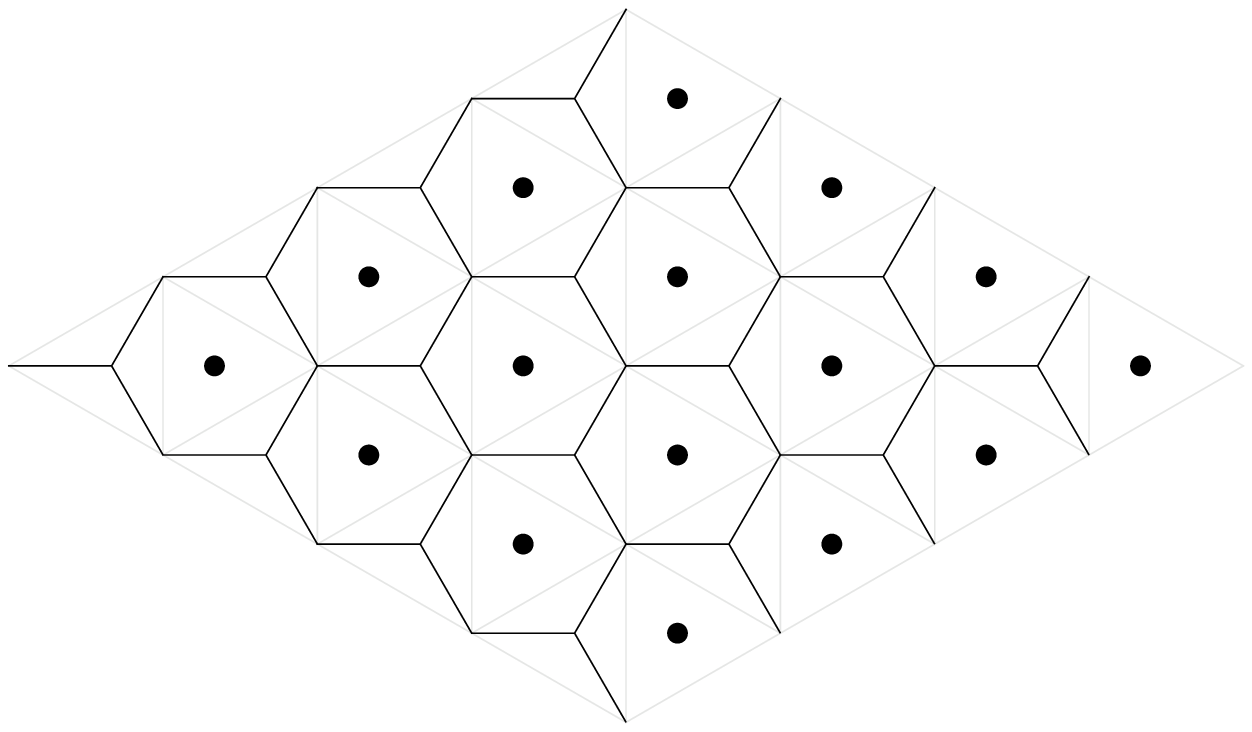}}%
\caption{Steps of the cube recurrence.\label{resistoraut} The black dots
are the vertices $v_{x,y,z}$ with $x+y+z=0$.}
\end{figure}

\section{Newton polygons, toric surfaces and integrable systems}
\label{AGsection}

\subsection{Newton polygons and toric surfaces}
Let us briefly recall this well known connection. See \cite{F} for more details. 
Let ${\rm T}=(\C^*)^2$. 
Denote by  $X^*({\rm T})$ the group of its characters, isomorphic to $\Z^2$. 

A Newton polygon $N$ is a convex polygon in the real plane 
$X_*({\rm T})_\R:=X^*({\rm T})\otimes\R=\R^2$ 
with vertices at the lattice 
$X^*({\rm T})$. It gives rise to a 
polarized projective toric surface ${\mathcal N}$, possibly singular. 
The polarization is given by a ${\rm T}$-equivariant 
line bundle ${\mathcal L}$ on ${\mathcal N}$. Shifting 
the Newton polygon by a lattice vector $l \in X^*({\rm T})$ 
amounts to changing the equivariant structure of the line bundle ${\mathcal L}$ 
by multiplying it by the trivial line bundle with the torus ${\rm T}$ action given  by the 
character $\chi_l$ assigned to $l$. 

Each edge $E$ of $N$ gives rise to a ray $\rho_E$ in the dual space $X_*({\rm T})_\R$, 
consisting of the normals to the edge $E$ looking inside of $N$. 
These rays form a fan $\Delta_N$. It subdivides the plane $X_*({\rm T})_\R$ 
into a disjoint union of cones. Each cone  $\sigma$ provides the dual cone $\sigma^{\vee}$, and hence 
a semigroup $R_\sigma:= \sigma^{\vee} \cap X^*({\rm T})$ and an affine surface 
$U_\sigma:= {\rm Spec}~\Z[R_\sigma]$. Gluing the surfaces  $U_\sigma$ so that $U_{\sigma_1\cap \sigma_2} = 
U_{\sigma_1}\cap U_{\sigma_2}$ we get the surface ${\mathcal N}$.
 
The torus ${\rm T}$ acts on ${\mathcal N}$. There is a unique $2$-dimensional orbit, $U_0$,  
 where $0$ is the origin, denoted below by ${\cal N}_0$.  Its complement 
is the {\it divisor at infinity} ${\mathcal N}_\infty$, whose shape is described by the 
polygon $N$: the irreducible components $D_E$ are projective lines parametrized by the edges $E$ of $N$, and 
their intersections match the vertices of $N$. 
The kernel of the action of the torus ${\rm T}$ on the component $D_E$ is the cocharacter 
$\chi_E^{\vee}: \C^* \to {\rm T}(\C)$ assigned to the conormal line to the edge $E$.

There is a canonical isomorphism, serving as a definition of the line bundle ${\mathcal L}$: 
$$
i_N: {\mathcal L} \stackrel{\sim}{\lra} {\mathcal O}(D), \qquad D = \sum_E a_ED_E.
$$ 
where the 
coefficient $a_E\in \Z$ is the distance from the origin to the edge $E$.\footnote{The latter is the 
value at the origin of the unique  surjective affine linear function $\alpha_E: X_*({\rm T}) \to \Z$ 
which vanishes 
on $E$ and is non-negative on $N$.} The torus ${\rm T}$ action on ${\mathcal L}$ comes from
 the natural 
${\rm T}$-action on ${\mathcal O}(D)$, provided by the action on ${\mathcal N}$ preserving the divisor $D$. 
Shifting the polygon $N$ by a vector $l\in X^*({\rm T})$ amounts 
to multiplication of the isomorphism $i_N$ by the character $\chi_l$, i.e. $i_{N+l}=i_N\chi_l$.  
Using the isomorphism $i_N$, we have 
\be \la{9.20.10.1}
H^0({\mathcal N}, {\mathcal L}) \stackrel{\sim}{\lra} H^0({\mathcal N}, {\mathcal O}(D)) \subset \C({\rm T}) 
\stackrel{\sim}{=} \C(z_1, z_2),
\ee
where $(z_1, z_2)$ is a basis in the group of characters of ${\rm T}$. It is 
identified with the space of all Laurent polynomials with 
the Newton polygon $N$. 

So given a pair $({\mathcal N}, {\mathcal L})$, the polygon $N$ is recovered as 
the  convex hull of the set of characters of the action of the torus ${\rm T}$ on   
$H^0({\mathcal N}, {\mathcal L})$. 

\vskip 2mm
Denote by $\omega$ the canonical up to a sign invariant $2$-form on ${\mathcal N} - {\mathcal N}_\infty$. 
Given a basis  $(z_1, z_2)$ of characters of the torus ${\rm T}$, it 
can be written as  
$$
\omega  = d\log z_1 \wedge d\log z_2.
$$
So it 
is a  
 $2$-form on ${\mathcal N}$ with logarithmic singularities at ${\mathcal N}_\infty$.

\subsection{The spectral data \cite{KO}} 

A {\it parametrization} of a degree $d$ divisor ${C}= \sum^k_{i=1}n_ic_i$ by a set $S$ is a map 
$$
\nu: S \lra \{c_1, ..., c_k\}~~\mbox{such that $|\nu^{-1}(c_i)|=n_i$}.
$$
Let $|{\mathcal L}|$ be a linear system of curves  on 
${\mathcal N}$ given by the zeros of sections of 
${\mathcal L}$.
\bd
A spectral data
is a triple $(C, S, \nu)$ where $C$ is a genus $g$ curve on 
the toric surface ${\mathcal N}$ from 
the linear system $|{\mathcal L}|$, $S$ is a 
degree $g$ effective divisor on $C_0=C\cap {\mathcal N}_0$, and $\nu = \{\nu_E\}$ are 
parametrizations of the divisors ${D}_E\cap C$.

The moduli space ${\mathcal S}_N = {\mathcal S}$ parametrizes the spectral data 
related to $N$. 
\ed

Let $C$ be a curve on ${\mathcal N}$ from the linear system 
 $|{\mathcal L}|$. Its intersection with 
${\mathcal N}_0$ is defined by a Laurent polynomial $P(z_1, z_2)$ 
with the Newton polygon $N$. The intersection of $C\cap D_E$ is determined by the sum of monomials 
of $P$ supported on the edge $E$. 
The ratio of monomials assigned to two integral points on the sides of $N$ are Casimirs. 
So the Casimirs are naturally assigned to the primitive boundary edges of $N$,
and their product is equal to $1$.

\bp
Given a black vertex $v$ of a minimal bipartite 
graph $\Gamma$ assigned to $N$, there is a natural rational map 
$$
\kappa_{\Gamma, v}: {\mathcal X} \lra {\mathcal S}.
$$
\ep

\begin{proof} A point of ${\mathcal X}$ parametrizes 
a line bundle with connection on $\Gamma$, encoded by the Kasteleyn operator ${K}$. 
Its determinant is a section of the line bundle ${\mathcal L}\otimes {\bf L}$.  
The divisor of zeros of the section is the {\it spectral curve} $C$ on  ${\mathcal N}$. 
The black point $b$ defines a section of a line bundle on $C$. Its degree is the genus $g$ of $C$, 
and its divisor is the divisor $S$. Homology classes $[\alpha_E]$ 
of zig-zag loops 
on $\Gamma$ are in bijection with the sides of the Newton polygon $N$. 
There is a parametrization
\be \la{param}
\gamma_E: \{\mbox{Zig-zag loops on $\Gamma$ in the homology class $[\alpha_E]$}\}\stackrel{\sim}{\lra} D_E\cap C, 
\ee
uniquely determined by the following condition. The function given by the monodromy around a zig-zag loop $\alpha$ 
coincides with the function given 
by the coordinate of the boundary point $\gamma_E(\alpha)$.
\end{proof}
 
Denote by ${\mathcal S}_\chi$ 
the subvariety of ${\mathcal S}$ given by the condition that 
$C \cap {\mathcal N}_{\infty}$ is a given divisor $C_\infty$. 
Using the parametrization (\ref{param}), the divisor $C_\infty$ provides 
a subvariety ${\mathcal X}_\chi$ in ${\mathcal X}$ defined by the condition that 
the monodromies around zig-zag paths are 
given by the coordinates of the corresponding boundary points. 
Since this condition uses only the boundary divisor of the spectral curve, 
there is a similar subvariety ${\mathcal B}_\chi$ in ${\mathcal B}$. 
The map $\kappa_{\Gamma, v}$ is evidently fibered over 
${\mathcal B}_\chi$, providing a commutative diagram
$$
\begin{array}{ccccc}
{\mathcal X}_\chi&&\stackrel{\kappa_{\Gamma, v}}{\lra} &&{\mathcal S}_\chi\\
&\searrow&&\swarrow &\\
&&{\mathcal B}_\chi &&
\end{array}
$$

\bt \la{20:23}
The image of the map $\kappa_{\Gamma, v}$ is open. The map $\kappa_{\Gamma, v}$ is a finite map over the generic 
point.\footnote{In the sequel to this paper we prove that this map is a birational isomorphism.} 
\et

See below for the proof.
Since both spaces have the same dimension, the first claim implies the second.

\subsection{The real positive points \cite{KO}} \label{realpospoints}
Since ${\mathcal X}$ is a union of cluster coordinate tori, and the 
transition functions between different cluster coordinate systems are subtraction free rational functions, 
there is a well defined set ${\mathcal X}(\R_{>0})$ of its real positive points.
It parametrizes connections 
on a minimal bipartite graph $\Gamma$ associated with $N$ 
whose face weights are positive numbers, and does not depend on the choice of $\Gamma$. 

Simple Harnack curves
 were defined by G. Mikhalkin in \cite{M} as real
curves $P(z_1,z_2)=0$ whose real components satisfy
certain topological constraints. They are a very special case of the
curves which appear in the
Harnack constructions \cite{H} of several
series of real curves of degree $d$ with the maximal number of connected
components (M-curves). Several characterizations  of simple
Harnack curves were given in \cite{M}. They include a definition via $2-1$ covering
of its amoeba. Recall that the amoeba is the image of the curve under the map $$
\C^*\times \C^*\to\R^2, ~~~ (z_1,z_2)\mapsto(\log|z_1|,\log|z_2|).
$$ Another characterization of simple Harnack curves,
due to \cite{MR}, is that they are the curves which maximize the area
of their amoeba
among all curves with given
Newton polygon.

The results of Kenyon-Okounkov \cite[Theorems 1,3]{KO} 
are as follows: 

\begin{itemize}

\item 
The real positive part ${\mathcal B}(\R_{>0}):= \pi({\mathcal X}(\R_{>0}))$ 
parametrizes simple Harnack curves on ${\mathcal N}$ 
which lie in the linear system $|{\mathcal L}|$. Such a genus $g$ simple Harnack curve has 
$g$ ovals $O_i$ matching the integral points inside of the Newton polygon $N$. 
\item 
The space ${\mathcal X}(\R_{>0})$ parametrizes the data  
$\{$A simple Harnack curve $C$ on ${\mathcal N}$ 
from $|{\mathcal L}|$,  a choice of a point on each oval $O_i$, a choice of parametrization 
of the intersection of $C$ with each divisor at infinity$\}$.
\end{itemize}

Therefore the  fibers of the  fibration
$$
\pi_+: {\mathcal X}(\R_{>0}) \lra {\mathcal B}(\R_{>0})
$$
are real $g$-dimensional tori. 
They are the  Liouville tori of the integrable system 
restricted to ${\mathcal X}(\R_{>0})$.

\subsection{Independence of the  Hamiltonians} \la{Concl}

\bc \la{9.23.10.1}
Forgetting the parametrization, 
the space ${\mathcal B}$ projects to an open part of $|{\mathcal L}|$. 
\ec

\begin{proof} Thanks to Section \ref{realpospoints} the image of 
${\mathcal B}$ is an algebraic subvariety of $|{\mathcal L}|$, 
and its real part contains an open subset of the real locus of  $|{\mathcal L}|$.
\end{proof}

\bc \la{9.29.10.1} \la{IHA} The Hamiltonians  are independent on ${\mathcal X}$. 
\ec

\begin{proof}  
Denote by $|{\mathcal L}|_\chi$ the moduli space of 
curves from the linear system $|{\mathcal L}|$ with a given intersection 
${\mathcal C}_\infty$ with ${\mathcal N}_\infty$. Thanks to Corollary \ref{9.23.10.1}, 
${\mathcal B}_\chi$ is a finite cover over on open part of $|{\mathcal L}|_\chi$. 
We claim that the Hamiltonians are local 
coordinate functions on $|{\mathcal L}|_\chi$. 
Therefore they are independent on 
${\mathcal X}$. 

Let us prove the claim. Identify the sections of ${\mathcal L}$ 
with Laurent polynomials in $(z_1, z_2)$ with the Newton polygon $N$. 
Let ${\mathcal P}^0_{N}$ be the subspace of Laurent polynomials in $(z_1, z_2)$ whose 
Newton polygon is the interior  of $N$. 
Adding 
such a polynomial to a section of ${\mathcal L}$, we do not change the intersection with 
${\mathcal N}_\infty$ of its divisor of zeros. So we get an action of 
the vector space ${\mathcal P}^0_{N}$ on $|{\mathcal L}|_\chi$.  
This action is transitive.  
The Hamiltonians 
are given by the coefficients of the monomials in  ${\mathcal P}^0_{N}$. 
So their differentials at a point $C\in |{\mathcal L}|_\chi$ form 
 a basis of $T^*_C|{\mathcal L}|_\chi$. 
\end{proof}

\old{
Denote by $\{\mbox{Hamiltonians}\}$ the vector space generated by the Hamiltonians.

\bl \la{9.22.10.1} 
Let $C$ be a smooth curve from $|{\mathcal L}|$. Then 

i) There is a canonical isomorphism
\be \la{II*}
{\bf I}: T_C{\mathcal B}_\chi = \Omega_C^1.
\ee

ii) There is a canonical linear isomorphism 
\be \la{I}
{\bf I}: \{\mbox{Hamiltonians}\}^* \lra \Omega^1_C.
\ee
\el

\begin{proof} i) First, let $X$ be a smooth algebraic surface with a symplectic form $\omega$, and $C$ a smooth curve 
in $X$. The symplectic form provides an isomorphism between the normal bundle  
$N_CX$ to $C$ and the line bundle $\Omega^1(C)$ of holomorphic $1$-forms on  $C$. Indeed, 
a normal vector $n$ to $C$ at a point $x\in C$ provides a linear functional 
$\omega_x(n, \cdot)$ on the tangent space $T_xC$. 

Let us assume now that 
the form $\omega$ is a $2$-form with logarithmic singularities 
at a normal crossing divisor $X_\infty$. 
Furthermore, $X$ may be singular, but the curve $C$ is smooth and located in the smooth locus of $X$. 
Let $C_\infty := C \cap X_\infty$. 
Then the above construction provides an isomorphism 
between the  space of sections of the normal bundle to $C$ vanishing at 
$C_\infty$  and $\Omega^1(C)$: 
\be \la{left}
\omega: \Gamma(C, N_CX(-C_\infty)) \stackrel{\sim}{\lra} \Omega^1(C).
\ee
Since $H^2(C, N_CX(-C_\infty))=0$, the space of deformations of the curve $C$ on $X$ 
with a prescribed intersection with $X_\infty$ is identified with the left space in (\ref{left}), 
and hence with 
$\Omega^1(C)$.

Applying this in the case when  let $X$ be a toric surface ${\mathcal N}$ 
assigned to a Newton polygon $N$, and $C$ is the curve of zeros of a section $s$ 
of the line bundle ${\mathcal L}$ on ${\mathcal N}$, we get the the part i).

ii)  
Let $C$ be a curve on ${\mathcal N}$ given by 
the zero locus of a section $s$ of ${\mathcal L}$. Its intersection with 
${\mathcal N} - {\mathcal N}_\infty$ is defined by a Laurent polynomial $P(z_1, z_2)$ 
with the Newton polygon $N$. The intersection of $C\cap D_E$ is determined by the sum of monomials 
of $P$ supported on the edge $E$. 
The ratio of monomials assigned to two integral points on the sides of $N$ are Casimirs. 
So the Casimirs are naturally assigned to the primitive boundary edges of $N$,
and their product is equal to $1$.

Let ${\mathcal P}^0_{N}$ be the vector space of Laurent polynomials in $(z_1, z_2)$ whose 
Newton polygon is the interior  of $N$. 
The vector space ${\mathcal P}^0_{N}$ acts freely on the base ${\mathcal B}_\chi$, 
and this action is locally transitive.  
The Hamiltonians 
are given by the monomial basis of  ${\mathcal P}^0_{N}$. 
So the differentials of Hamiltonians at a point $C\in {\mathcal B}_\chi$ form 
 a basis of $T^*_C{\mathcal B}_\chi$. 
This plus i) imply isomorphism (\ref{I}). 
\end{proof}
}

\noindent{\bf Proof of Theorem \ref{20:23}}. 
The restriction of the map $\kappa_{\Gamma, v}$ to 
${\mathcal X}_\chi(\R_{>0})$ 
is an isomorphism. Thus its image is open in ${\mathcal S}_\chi$. 
\hfill$\square$

\old{
\paragraph{Concluding the proof of Theorem \ref{completeintegrabilitytheorem}} 

ii) The independence of the Hamiltonians  is established in Corollary \ref{9.29.10.1}. 
The number of the Hamiltonians was counted in Section \ref{numHam}. 
The part ii) of Theorem \ref{completeintegrabilitytheorem} 
follows. \hfill{$\Box$}
\vskip 2mm
}

\section{Appendix: Calculating the Poisson structure on  ${\L}_\Gamma$} \la{sec2}

Let us give a local description 
of the intersection pairing (\ref{INTP}) 
as a sum of local contributions assigned to the vertices 
of $\Gamma$. 

\subsection{The local pairing.} 
The {\it star of a vertex $v$} in a ribbon graph, Figure \ref{di6}, is 
a vertex $v$ together with the cyclically ordered edges  incident to $v$. 

\begin{figure}[ht]
\centerline{\epsfbox{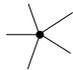}}
\caption{The star of a vertex on an oriented surface.\label{di6}}
\end{figure} 
The star of a vertex $v$ gives rise to an abelian group ${\mathbb A}_v$, given by  
$\Z$-linear combinations of the oriented edges $E_i$ at $v$ of total degree zero: 
$$
{\mathbb A}_v = \{\sum n_i \stackrel{\to}{E}_i ~|~ \sum n_i=0, ~n_i\in \Z\}, 
$$
where $\stackrel{\to}{E}_i$ is the edge $E_i$ oriented out of $v$. 
Clearly ${\mathbb A}_v$ is a torsion free abelian group of rank $m-1$, where $m$ is the valency of $v$. 
It is 
generated by  the ``oriented paths'' $\stackrel{\to}E_i - \stackrel{\to}E_j$, 
where $- \stackrel{\to}E_j$ is the edge $E_j$ oriented towards $v$. 

\begin{figure}[ht]
\centerline{\epsfbox{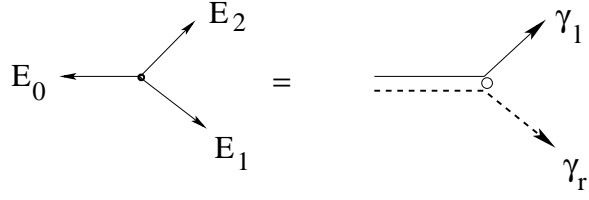}}
\caption{One has  $\delta_v(\gamma_r, \gamma_l)=\frac{1}{2}$.\label{di8}}
\end{figure}

\bl There exists a unique skew symmetric bilinear form
$$
\delta_v: {\mathbb A}_v\wedge {\mathbb A}_v \lra \frac{1}{2}\Z
$$
such that for any triple  $E_0, E_1, E_2$ of edges as in Figure \ref{di8} one has 
\be \la{4.18.10.111}
\delta_v( \gamma_r\wedge \gamma_l) = \frac{1}{2}, \qquad \gamma_r:= \stackrel{\to}{E_1} - 
\stackrel{\to}{E_0}, \quad \gamma_l:= \stackrel{\to}{E_2} - \stackrel{\to}{E_0}. 
\ee
\el

\begin{proof} Let $\{x_1, ..., x_m\}$ be the cyclically ordered 
set of the endpoints of the edges sharing the  vertex $v$. Let 
$\gamma_{i}$ be the oriented  path $x_ivx_{i+1}$. Then the 
elements $\gamma_{1}, ..., \gamma_m$ generate ${\mathbb A}_v$ 
and satisfy the only relation $\sum_i\gamma_{i}=0$. We define the local pairing 
by setting 
$$
\delta_v( \gamma_{i}\wedge \gamma_{i+1}) = \frac{1}{2}.
$$
Then $\sum_i\gamma_{i}$ is in the kernel, so  the local pairing is well defined. 
It is easy to see that it satisfies formula (\ref{4.18.10.111}). 
\end{proof}

\subsection{The global pairing} 
Let $L_1, L_2$ be two oriented paths on $\Gamma$.
Let $v$ be a vertex shared by them. Each path $L_i$ gives rise to an oriented path at the 
star of $v$, providing an element $l_i\in {\mathbb A}_v$.  Let $\varepsilon_v(L_1, L_2)$ 
be the value of the local pairing $\delta_v$ on $l_1 \wedge l_2$. 

\bd \la{4.10.11.3} Let $L_1, L_2$ be two oriented loops on a bipartite surface graph $\Gamma$. 
Then 
$$
\varepsilon(L_1, L_2):= \sum_v{\rm sgn}(v)\varepsilon_v(L_1, L_2),
$$where the sum is over all vertices $v$ shared by $L_1$ and $L_2$, 
and ${\rm sgn}(v)=1$ for the white vertex $v$, and $-1$ for the black vertex.
\ed

\begin{figure}[ht]
\centerline{\epsfbox{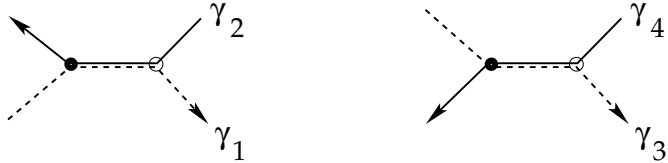}}
\caption{One has $\varepsilon(\gamma_1, \gamma_2)=-1$, $\varepsilon(\gamma_3, \gamma_4)=0$.\label{di9}}
\end{figure}

\begin{figure}[ht]
\centerline{\epsfbox{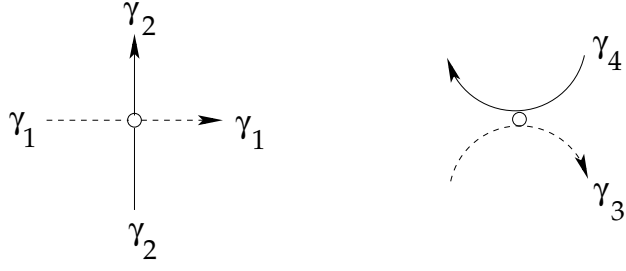}}
\caption{One has $\varepsilon(\gamma_1, \gamma_2)=1$, $\varepsilon(\gamma_3, \gamma_4)=0$.\label{di10}}
\end{figure}

\bp
The bilinear form $\varepsilon(L_1, L_2)$ from Definition \ref{4.10.11.3} coincides with the bilinear form defined 
using the conjugated graph $\widehat \Gamma$ in Section \ref{sec1.1.2}
\ep

\begin{proof} It is sufficient to show that in the examples 
on Figures \ref{di9} and \ref{di10} the sum of the local contributions coincides with the local intersection number 
of the corresponding paths on the conjugated graph. The latter clearly 
gives the results presented on Figures \ref{di9} and \ref{di10}. 
So one needs  to show that the former gives the same result. 
For example, to check that $\varepsilon(\gamma_1, \gamma_2)=1$ for the left picture on Figure \ref{di10}, 
notice that on Figure \ref{di11} we have 
$$
\delta_v(\gamma_1, \gamma_3)= -\frac{1}{2}, \qquad \delta_v(\gamma_1, \gamma_2+\gamma_3)= \frac{1}{2}.
$$
\end{proof}

\begin{figure}[ht]
\centerline{\epsfbox{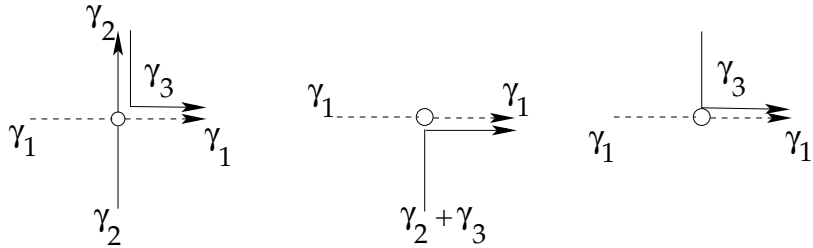}}
\caption{Proving $\varepsilon(\gamma_1, \gamma_2)=1$.\label{di11}}
\end{figure}

{\bf Example.} 
The face orientation provides the counterclockwise orientation of the boundary of the face. 
Given two different faces $F_1, F_2$, and an edge $E$, 
we calculate that  
\begin{equation} 
\delta_{E; F_1, F_2}: =
\left\{ \begin{array}{lll} 
0 & \mbox{if $E \not \in \partial(F_1) \cap \partial(F_2)$,}\\ 
+1 & \mbox{ if the ``black to white'' orientation of $E$ }\\
&\mbox{ is the same as the orientation induced }\\
&\mbox{ by the orientation of the face $F_1$,}\\
-1& \mbox{ otherwise.}
\end{array}\right.
\end{equation} 
Taking now the sum over all edges $E$, we get 
$$
\varepsilon(F_1, F_2) := \varepsilon(\partial(F_1),\partial(F_2)) = \sum_E\delta_{E; F_1, F_2}.
$$

{\bf Example.} Given a face $F$ and an oriented loop $L$ on $\Gamma$, 
let us calculate $\varepsilon(F, L)$. Denote by 
${\rm Or}_{E}$ the  ``black to white'' orientation of the edge $E$. 
The orientation of $L$ induces an orientation ${\rm Or}_{E, L}$ of any edge $E$ of $L$. Set
\begin{equation} \label{f2}
\delta_{E, L} = - {\rm Or}_{E, L}/{\rm Or}_{E} \in \{\pm 1\}. 
\end{equation} 
So if the orientation of  $L$ is opposite to
 the canonical orientation of $E$, then $\delta_{E, L} = 1$. 
Otherwise  $\delta_{E, L}=-1$. 
Then it follows from Figure \ref{di9} that we have
\be \la{3.6.09.1}
\varepsilon(F, L)  = \sum_{E\in L \cap \partial(F)}\delta_{E, L}.
\ee
Here the sum is over all edges $E$ shared by the face $F$ and the loop $L$. 

\vskip 3mm

Changing the orientation of the surface amounts to changing the 
sign of the Poisson bracket. Interchanging black and white we change the 
sign of the Poisson bracket. 

\subsection{Amalgamation and the Poisson structure on  ${\L}_\Gamma$.} 

 We extend the story 
to the case $\Gamma$ is a bipartite graph on an oriented surface $S$ with boundary. 
Pairs $(\Gamma, S)$ are encoded by bipartite ribbon graphs. 

\subsubsection{Bipartite ribbon graphs}
Take a bipartite graph embedded into an oriented surface $S$ with boundary, so that 
some vertices of the graph may lie at the boundary of $S$. 
Let us add to every boundary vertex of the graph a few edges sticking 
out of the surface, called {\it external legs}. 
The graph $\Gamma$ is then a bipartite ribbon graph. Every bipartite ribbon graph is obtained this way. 

\bd Given a bipartite ribbon graph $\Gamma$,  the moduli space ${\L}_\Gamma$ 
 parametrizes line bundles with connection over the graph $\Gamma$, equipped with
 a trivialization 
of the restriction of the bundle to every external edge. 
\ed

Let $\Gamma_1$ and $\Gamma_2$ be  
bipartite ribbon graphs. Let us assume 
that we are given a way to glue some pairs of their external legs into a 
bipartite ribbon graph  $\Gamma$. So each external leg of  each graph is glued 
to no more than one 
external leg of the other graph, and the result is a bipartite ribbon graph. 
Then there is a map of moduli spaces, the {\it amalgamation map}: 
\be \la{4.11.10.1}
\alpha: {\L}_{\Gamma_1}\times {\L}_{\Gamma_2}\lra {\L}_{\Gamma}.
\ee
We use trivializations at the external legs of the bundles on $\Gamma_1$ and 
$\Gamma_2$ to glue 
a new bundle on $\Gamma$. It inherits a connection. The following is obvious.

\bl
The gluing map (\ref{4.11.10.1}) is a principal fibration over ${\L}_\Gamma$ with 
the fiber $(\C^*)^k$, 
where $k$ is the number of pairs of glued boundary legs. 
\el
\begin{figure}[ht]
\centerline{\epsfbox{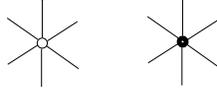}}
\caption{White and black hedgehogs with six external legs.\label{di12}}
\end{figure}

A bipartite ribbon graph with single internal vertex is 
called a {\it hedgehog}, see Figure \ref{di12}. 
It is a white / black hedgehog according to the color of its internal vertex.
Let $H_v$ be a hedgehog with  the internal vertex $v$. Then one has
\be \la{4.11.10.2}
{\L}_{H_v} = {\rm Hom}({\mathbb A}_v, \C^*)\stackrel{}{=} {\rm Ker}\Bigl(\mu: 
(\C^*)^{m} \lra \C^*\Bigr).
\ee
Here $m$ is the number of legs  and
$\mu$ is the multiplication map. The first isomorphism assigns 
to an oriented  path connecting two external legs 
the monodromy of the connection along this path. Since the line bundle 
is trivialized at the endpoints, 
it is just a non zero complex number. The  second 
is obvious.

A ribbon graph $\Gamma$ is
 glued from the hedgehogs provided by the stars of its internal 
vertices. 
Amalgamating the hedgehog moduli spaces we get the moduli space ${\L}_{\Gamma}$. 

\begin{figure}[ht]
\centerline{\epsfbox{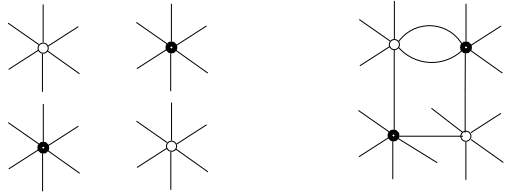}}
\caption{Gluing bipartite ribbon graphs from hedgehogs.\label{di13}}
\end{figure}

Thanks to identification (\ref{4.11.10.2}), the skew symmetric 
local pairing on the abelian group 
${\mathbb A}_v$ provided by the color of the internal vertex $v$ of 
the hedgehog gives rise to a Poisson 
structure on the moduli space ${\L}_{H_v}$. 
The following proposition follows easily from the definitions.
\bl
There is a unique Poisson structure on the space  ${\L}_\Gamma$ such that 
the Poisson structure on the space ${\L}_{H_v}$ assigned to a hedgehog $H_v$ is 
the standard one, and the amalgamation maps (\ref{4.11.10.1}) are Poisson. 
\el

When $\Gamma$ is a bipartite graph on 
a surface without boundary we recover its Poisson structure.

\end{document}